\newcommand{\email}[1]{\href{mailto:#1}{#1}}
\pgfplotsset{every axis/.append style={
                    label style={font=\Large},
                    tick label style={font=\Large},
                    legend style={font=\Large}
                    }}
\patchcmd{\ttlh@hang}{\parindent\z@}{\parindent\z@\leavevmode}{}{}
\patchcmd{\ttlh@hang}{\noindent}{}{}{}
\pgfplotsset{compat=1.15}
\newtheorem{theorem}{Theorem}
\newtheorem{prop}{Proposition}
\newtheorem{lemma}{Lemma}
\newtheorem{rem}{Remark}
\DeclareMathOperator{\e}{e}
\DeclareMathOperator{\divergence}{div}
\renewcommand{\C}{\mathcal{C}}
\newcommand{\R}{\mathbb{R}}
\newcommand{\M}{\mathcal{M}}
\newcommand{\E}{\mathcal{E}}
\newcommand{\D}{\mathcal{D}}
\newcommand{\Diss}{\mathbb{D}}
\newcommand{\N}{\mathbb{E}}
\newcommand{\F}{\mathcal{F}}
\renewcommand{\G}{\mathcal{G}}
\newcommand{\s}{\sigma}
\newcommand{\V}{\underline{V}}
\newcommand{\ww}{\underline{\mathbbm{w}}}
\newcommand{\vv}{\underline{\mathbbm{v}}}
\renewcommand{\u}{\underline{u}}
\renewcommand{\v}{\underline{v}}
\newcommand{\w}{\underline{w}}
\newcommand{\one}{\underline{1}}
\newcommand{\zero}{\underline{0}}
\newcommand{\Nd}{\underline{N}}
\newcommand{\Pd}{\underline{P}}
\newcommand{\phid}{\underline{\phi}}
\newcommand{\psid}{\underline{\psi}}
\newcommand{\SG} {Scharfetter--Gummel }
\newcommand{\Mat}{\mathbb{M}}
\newcommand{\dmax}{a}
\definecolor{forestgreen}{rgb}{0.13,0.54,0.13}
\numberwithin{equation}{section}
\title{A structure preserving hybrid finite volume scheme for semiconductor models with magnetic field on general meshes}
\author{Julien Moatti\footnote{\email{julien.moatti@inria.fr}}}
\affil{Inria, Univ.~Lille, CNRS, UMR 8524 - Laboratoire Paul Painlev\'e, F-59000 Lille, France}
\begin{document}
\maketitle
\begin{abstract}
We are interested in the discretisation of a drift-diffusion system in the framework of hybrid finite volume (HFV) methods on general polygonal/polyhedral meshes.
The system under study is composed of two anisotropic and nonlinear convection-diffusion equations with nonsymmetric tensors, coupled with a Poisson equation and describes in particular semiconductor devices immersed in a magnetic field.
We introduce a new scheme based on an entropy-dissipation relation and prove that the scheme admits solutions with values in admissible sets - especially, the computed densities remain positive. 
Moreover, we show that the discrete solutions to the scheme converge exponentially fast in time towards the associated discrete thermal equilibrium.
Several numerical tests confirm our theoretical results.
Up to our knowledge, this scheme is the first one able to discretise anisotropic drift-diffusion systems while preserving the bounds on the densities.
\end{abstract}

{\small {\bf Keywords:} Finite volume schemes, general meshes, anisotropic drift-diffusion equations, semiconductor models, Poisson--Nernst--Planck systems, long-time behaviour, entropy method.}

{\small {\bf MSC2020:} 65M08, 35K51, 35B40, 35Q81, 82D37}

\tableofcontents
\section{Introduction}
We are interested in the numerical approximation of a generalised anisotropic Van Roosbroeck's drift-diffusion system, inspired by the model introduced in~\cite{GaGa:96} by Gajewski and G\"artner to describe semiconductors devices immersed in exterior magnetic fields. 
Such a model differs from the classical drift-diffusion system from its anisotropic nature, induced by the magnetic field.
From a numerical point of view, this anisotropy leads to several difficulties, including the preservation of bounds on the solution (see the numerical results of~\cite{GaGa:96}). 
The main originality of our work is to address this difficulty using a nonlinear Hybrid Finite Volume method~\cite{EGaHe:10,CHHLM:22}, specifically designed to handle anisotropy while preserving the bounds.
Hence, up to our knowledge, the scheme introduced here is the first bound-preserving scheme for anisotropic drift-diffusion systems. 
From a general perspective, the Van Roosbroeck's drift-diffusion system, initially introduced in~\cite{VanRo:50}, is one of the fundamental models for the description and the simulation of semiconductor devices. It is a macroscopic model taking into account the densities of the charge carriers (the electrons, negatively charged, and the holes, of positive charge) alongside with an electrostatic potential induced by the spatial inhomogeneity of the charges. Mathematically speaking, the system consists of two parabolic convection-diffusion equations coupled with a Poisson equation on the electrostatic potential.  
Among the other different generalisations of the initial model proposed in~\cite{VanRo:50}, one can mention the incorporation of nonlinear terms of recombination-generation (see for example~\cite{MRS:90}), as well as the use of non-Boltzmann statistics~\cite{GaGr:89,FKF:17} to describe some specific physical situations (high carrier densities) or devices (for example, organic semiconductors \cite{vMCo:08}), which leads to the modification of the diffusive term into a nonlinear one.
Another generalisation of the system, as discussed above, is presented in~\cite{GaGa:96}: the authors consider that the semiconductor is immersed into an exterior magnetic field. Such a situation leads to the introduction of anisotropy tensors (which are nonsymmetric) in the convection-diffusion equations, related to the magnetic field.

In the present work, we consider a general model that encompasses the different features mentioned above. More precisely, let $\Omega$ be an open, bounded, connected polytopal subsets of $\R^d$, with $d \in \{1,2,3\}$ whose boundary $\partial \Omega$ is divided into two disjoint subset $\partial \Omega = \overline{\Gamma^D} \cup \overline{\Gamma^N}$, with $|\Gamma^D| > 0$. We are interested in the following problem, where the unknowns $N$, $P$ and $\phi$ are functions from $\R_+ \times \Omega$ to $\R$: 
\begin{equation} \label{pb:DD:evol}	
	\left \lbrace
	\begin{aligned}
        \partial _ t N - \divergence ( N \Lambda_N  \nabla (h(N) - \phi )  ) &= - R(N,P) &&\text{ in } \R_+ \times \Omega \\
		\partial _ t P - \divergence (P \Lambda_P  \nabla (h(P)  +\phi )  ) &=- R(N,P) &&\text{ in } \R_+ \times \Omega \\
		- \divergence ( \Lambda_{\phi} \nabla \phi  ) &=  C + P - N  &&\text{ in } \R_+ \times \Omega \\
		N =  N^D, \ P =  P^D \text{ and } \phi &=  \phi^D &&\text{ on }  \R_+ \times \Gamma^D \\
		N\Lambda_N \nabla ( h(N) -  \phi ) \cdot n = P \Lambda_P \nabla( h(P) + \phi ) \cdot n 
		& = 	\Lambda_\phi \nabla \phi  \cdot n  = 0 &&\text{ on } \R_+ \times \Gamma^N\\		
	 	N(0, \cdot)  =  N^{in} \text{ and } \ P(0, \cdot) &=  P^{in}   &&\text{ in } \Omega, 
    \end{aligned}
	\right.
\end{equation}
where $n$ denotes the unit normal vector to $\partial \Omega$ pointing outward $\Omega$.

Let us give some insight and explanations about this system, and make precise assumptions on the data. 
First, the unknowns $N$ and $P$ refer respectively to the densities of electrons and holes, and $\phi$ refers to the electrostatic potential. The densities $N$ and $P$ take values in the set of admissible densities $I_h = ]0, \dmax[$, where $\dmax \in ]0, +\infty]$ is the upper bound on the density.
$I_h$ is in fact the definition domain of the function $h : I_h \to \R$, which depends on the statistics used to describe the relation between the densities and the chemical potential. We assume that $h$ is a $\C^1$ function, such that $h'>0$ on $I_h$, with limits $\displaystyle \lim_{0} h = -\infty$ and  $\displaystyle \lim_{\dmax} h = +\infty$.
We denote by $g = h^{-1}: \R \to I_h$ the inverse function of $h$. Note that $g$ is positive, and that $g'> 0$ on $\R$. Moreover, we assume that there exists $g_0 >0$ such that for any real $s$, $g'(s) \leq g_0 g(s)$. We refer the reader to \cite{GaGr:89,FKF:17,FPF:18} for more details about these statistics functions, but let us emphasise here three classical cases that fall under the scope of our assumptions:
\begin{itemize}[topsep=0pt, partopsep=0pt , itemsep=0pt,parsep=0pt]
	\item[(i)] the Boltzmann statistics, for which $g=\exp$, $\dmax=+\infty$ and $h=\log$;
	\item[(ii)] the Blakemore statistics, for which $g(s) = (\gamma + \e^{-s})^{-1}$, $\dmax =\frac{1}{\gamma}$ and $h(s) = \log(s / (1-\gamma s) )$, with $\gamma > 0$ (a relevant physical value is  $\gamma = 0.27$);
	\item[(iii)] the Fermi-Dirac statistics of order $\nicefrac12$, for which $g(s) = \frac{2}{\sqrt{\pi}} \int_0^{+\infty} \frac{\sqrt{z}}{1 + \e^{z-s}}dz$ and $\dmax=+\infty$.
\end{itemize}
The statistics (ii) can be interpreted as an approximation of (iii) in low density situations (see~\cite{Blakemore:82}), and similarly (ii) can be approximated by (i) if the carrier densities are small enough.
For the purpose of future analysis, we define $H : x \to \int_1^x h(s)ds$ and $G : x \mapsto \int_{-\infty}^x g(s)ds$. Notice that $H$ and $G$ are strictly convex, and that $G$ is positive.
\newline
The function $C \in L^\infty(\Omega)$ is the doping profile of the semiconductor, which characterises the device under study. In practice, $C$ is a discontinuous function.
\newline
The term $R(N,P)$ corresponds to the recombination-generation rate, which can be interpreted as a reaction term between electrons and holes. We assume that this term has the following form:
\begin{equation}
R(N,P) = r(N,P) \left ( \e^{h(N) + h(P) } -1 \right ), 
\end{equation}
where $r$ is a continuous non-negative function, which can be nonconstant in space. Pertinent choices of $r$ are for example (see~\cite{Marko:86,MRS:90,FRDKF:17}) the Auger recombination $r(N,P) = c_N N + c_P P$, the Shokley-Read-Hall (SRH) term $r(N,P) = (\tau_N N + \tau_P P + \tau_C)^{-1}$, or no recombination term $r=0$.
\newline
The tensor $\Lambda_\phi \in L^\infty( \Omega , \R^{d\times d})$ is the (rescaled) permittivity of the medium.
We assume that it is a symmetric and uniformly elliptic tensor, in the sense that there exists $\lambda_\sharp^\phi \geq \lambda_\flat^\phi > 0$ such that $\forall \xi \in \R^d,$ 
\[
	\lambda_\flat^\phi |\xi|^2\leq \xi \cdot \Lambda_\phi \xi \leq \lambda_\sharp^\phi   |\xi|^2 , 
\]
In practical situations, the permittivity is often assumed isotropic, leading to a tensor of the form $\Lambda_\phi = \lambda^2 \epsilon \,  I_d$, where $\lambda$ is the rescaled Debye length of the system, which accounts for the nondimensionalisation, and $\epsilon : \Omega \to R_+^*$ is a uniformly positive function corresponding to the dielectric permittivity of the material. Note that relevant values of the Debye length can be very small, inducing some stiff behaviours. Moreover, since the devices are often made of different materials, the function $\epsilon$ can be non-regular, and exhibits discontinuities at junctions between different materials.
\newline
The tensors $\Lambda_N$ and $\Lambda_P$ are diffusion tensors in $L^\infty( \Omega , \R^{d\times d})$, related to the exterior magnetic field. We refer to~\cite{GaGa:96} for a detailed description of the semiconductor models with magnetic field. A typical example is the case $d=2$, where the magnetic field $\vec{B}$ is orthogonal to the device: letting $b= |\vec{B}|$, the tensors write
\begin{equation*}
	\Lambda_N = \frac{\mu_N}{1+b^2}\begin{pmatrix}
	1 & b \\ 
	-b & 1
	\end{pmatrix} 
	\text{ and }
	 \Lambda_P = \frac{\mu_P}{1+b^2}\begin{pmatrix}
	1 & -b \\ 
	b & 1
	\end{pmatrix} , 
\end{equation*}
where $\mu_N: \Omega \to \R_+^\star$ and $\mu_P: \Omega \to \R_+^\star$ are the rescaled mobilities of the electrons and holes.
For our analysis, we will consider general tensors, and only assume that the tensors are uniformly elliptic and bounded, in the sense that there exist $\lambda_\sharp \geq \lambda_\flat > 0$ such that for any $\xi \in \R^d,$ 
\[
	\lambda_\flat  |\xi|^2 \leq \xi \cdot \Lambda_N \xi , \ \lambda_\flat  |\xi|^2 \leq \xi \cdot \Lambda_P \xi \text{ and }
	 |\Lambda_N \xi|\leq\lambda_\sharp   |\xi|  , |\Lambda_P \xi|\leq\lambda_\sharp   |\xi|.
\]
Note that these tensors are nonsymmetric in general (if $b \neq 0$).
\newline
Concerning the boundary data, we consider mixed Dirichlet-Neumann conditions. On $\Gamma^D$, which corresponds to the ohmic contacts, we impose that the values of the densities and the electrostatic potential are equal to $N^D$, $P^D$ and $\phi^D$ which are the traces of $H^1$ functions, denoted the same way. On $\Gamma^N$, which corresponds to insulated contacts, we impose a null flux. 
Concerning the initial conditions, we impose the initial densities $N^{in}$ and $P^{in}$, which are in $L^\infty(\Omega)$.
We assume that the initial and boundary data are uniformly bounded in $I_h$: there exist $0< m   < M < \dmax$ such that, almost everywhere on $\Omega$, 
\begin{equation} \label{eq:bounddata}
	m \leq  N^{in} , P^{in}, N^D , P^D \leq M.
\end{equation}
In the sequel, we will use the generic term of ``data'' or ``physical data'' to refer to the objects previously introduced, namely 
$h$, $r$, $\Lambda_\phi$, $\Lambda_N$, $\Lambda_P$, $C$, $N^D$, $P^D$, $\phi^D$, $N^{in}$ and $P^{in}$.

The existence and uniqueness of the global solutions to the drift-diffusion system~\eqref{pb:DD:evol} have been originally investigated in the case of Boltzmann statistics ($h = \log$), see for example~\cite{Mock:74,Gajew:85,MRS:90}. Concerning models with general statistics described above, we refer to~\cite{GaGr:89,GLN:21}.
One of the main properties of the solutions is the fact that the carrier densities $N$ and $P$ take values in $I_h$: in the sequel we will say that the densities are $I_h$-valued. Another natural concern lies in the long-time behaviour of the system, that is to say the asymptotics of the solutions when $t \to +\infty$: under some assumptions on the data, the solutions are shown to converge exponentially fast towards some steady-state called thermal equilibrium. 
The thermal equilibrium is a specific steady-state $(N^e,P^e, \phi^e)$ for which the electrons and holes
currents cancel. Under the hypothesis on the function $h$, according to \cite{MaUn:93}, there is no vacuum in this state, which hence satisfies
$\nabla \left (h(N^e) - \phi^e  \right ) = 0$ and $\nabla \left (h(P^e) + \phi^e  \right ) = 0$ on $\Omega$.
Therefore, there exist some constants $\alpha_N$ and $\alpha_P$ such that $h(N^e) - \phi^e = \alpha_N$ and $ h(P^e) + \phi^e = \alpha_P$ in $\Omega$ (recall that $\Omega$ is connected). In particular, this implies the following compatibility condition on the boundary data:
\begin{equation}\label{eq:compcond}
	h(N^D) - \phi^D = \alpha_N \text{ and } h(P^D) + \phi^D = \alpha_P \text{ on }  \Gamma^D.
\end{equation}
Moreover, at the thermal equilibrium, there is no more recombination nor generation, that is to say $R(N^e,P^e) =  0$. This implies, according to the definition of $R$, that $h(N^e) + h(P^e) = 0$ if $r\neq 0$. Combined with the previous constraint, this imposes another compatibility condition on the constants if $r \neq 0$: 
\begin{equation} \label{eq:alpha}
\alpha_N + \alpha_P = 0 .
\end{equation}
Under these conditions, we get that $N^e = g(\alpha_N + \phi^e )$ and $P^e = g(\alpha_P - \phi^e )$. Thus, the electrostatic potential at the equilibrium $\phi^e$ is a solution to the following nonlinear Poisson equation (called Poisson-Boltzmann in case of Boltzmann statistics, i.e. if $g = \exp$):
\begin{equation} \label{pb:PoissonEqT}
	\left \lbrace
	\begin{aligned}
		- \divergence ( \Lambda_{\phi} \nabla \phi^e  ) & =  C +  g(\alpha_P - \phi^e ) - g(\alpha_N + \phi^e )  &&\text{ in } \Omega, \\
		\phi^e &=  \phi^D &&\text{ on }  \Gamma^D, \\
		\Lambda_\phi \nabla \phi^e  \cdot n  &= 0 &&\text{ on }  \Gamma^N.
	\end{aligned}
	\right.
\end{equation}	
In the sequel, we will assume that the boundary data are compliant with the thermal equilibrium and satisfy the compatibility conditions~\eqref{eq:compcond} and~\eqref{eq:alpha} (if $r \neq 0$). Notice that~\eqref{eq:compcond} and~\eqref{eq:bounddata} imply that $\phi^D$ is bounded on $\Gamma^D$. Hence, we assume that its lifting on $\Omega$, also denoted by $\phi^D$, is in $H^1(\Omega) \cap L^\infty(\Omega)$.
The analysis of the long-time behaviour of the drift-diffusion system was carried out in~\cite{GaGa:96} in the case of the Boltzmann statistics with a magnetic field, and in~\cite{GaGr:89} for other statistics (without magnetic field). It relies on the use of the entropy method, which consists in using a Lyapunov functional -the entropy- decaying with time. Such a method was initially used in kinetic theory and then extended to the study of various dissipative systems, as explained in \cite{ACDDJ:04,Junge:16}. 
Let us sketch here the main lines of this method: letting 
\begin{multline*}
\N(t) = \int_\Omega H(N) - H(N^e) - h(N^e)(N-N^e) + \int_\Omega H(P) - H(P^e) - h(P^e)(P-P^e)  \\
	+ \frac{1}{2}\int_\Omega \nabla(\phi-\phi^e) \cdot \Lambda_\phi \nabla(\phi-\phi^e),
\end{multline*}
\begin{multline*}
	\Diss(t)= \int_\Omega N \nabla( h(N) - \phi ) \cdot \Lambda_N \nabla( h(N) - \phi ) +
		\int_\Omega P \nabla( h(P) + \phi ) \cdot \Lambda_P \nabla( h(P) + \phi ) \\+ 
		\int_\Omega  R(N,P)\left ( h(N) + h(P) \right ) , 
\end{multline*}
one has the following entropy-entropy dissipation relation: 
\begin{equation} \label{eq:entrostruct}
	\frac{d \N}{dt} (t)= - \Diss(t).
\end{equation}
From the assumptions on $R$ and $h$, one deduces that the entropy $\N$ and the dissipation $\Diss$ are non-negative quantities. Therefore, the entropy decreases, and this implies the convergence of $(N(t),P(t),\phi(t) )$ towards $(N^e,P^e,\phi^e)$ when $t \to \infty$. 

Note that the system \eqref{pb:DD:evol} can also be interpreted as a Poisson--Nernst--Planck (PNP) system, which describes the evolution of charged particles, typically ions in a solution. Especially, the results presented in this article can be easily generalised to systems with more than two charge carriers.

From a numerical point of view, approximating the solutions to system~\eqref{pb:DD:evol} proves to be a challenging task due to various factors, including the nonlinear coupling between the equations, the discontinuity of the doping profile $C$, the stiffness induced by small values of the Debye length $\lambda$, the anisotropy in the convection-diffusion equations or the non-symmetric nature of the tensors.
Moreover, it is essential to design numerical schemes that preserve the qualitative physical properties of the continuous model. Here, the fact that the densities take values in $I_h$ as well as the long-time behaviour are characteristic features of the continuous system that should be preserved at the discrete level.
Another challenge lies in the variety of meshes the scheme can handle. Indeed, semiconductor devices are subject to boundary layers phenomena, which often require local mesh refinement to be performed: matching simplicial meshes are strongly constrained, and not very suitable for this purpose (see~\cite[Chapter 5.4]{Marko:86}).
Among the various numerical methods that have been proposed for solving drift-diffusion systems, the seminal work of Scharfetter and Gummel~\cite{SchGu:69} in one dimension with Boltzmann statistics presents the keystone idea of numerous schemes: to use the exponential relation between chemical potentials and densities to ensure a preservation of the steady-state as well as a good discrete long-time behaviour. This idea was generalised for different statistics (of the form $h(s) = s^\alpha$, which do not match the assumptions presented above and allow vacuum) in~\cite{BeCh:12} by Bessemoulin-Chatard. This scheme was proved to have a good long-time behaviour in \cite{BCCH:17,BCCH:19}, following ideas from Chainais-Hilairet and Filbet introduced in~\cite{CHFi:07} to analyse the long-time behaviour of an upwind TPFA scheme. The analysis strongly relies on the adaptation of the entropy method at the discrete level.
The schemes discussed above are essentially part of the two-points flux approximation (TPFA)~\cite{EGH:00}, which suffers from limitations: the mesh has to satisfy some orthogonality conditions, and the diffusion tensor has to be isotropic. In order to overcome these strong constraints, various finite volume methods have been introduced in the framework of the Poisson equation (see \cite{Droni:14}), using auxiliary unknowns. Regrettably, these schemes do not enjoy monotonicity properties, and especially they can have negative solutions. In \cite{CaGui:16,CaGui:17}, Canc\`es and Guichard proposed a solution to compensate this limitation, introducing nonlinear schemes based on the entropic structure of the problem.
The specific question of approximating drift-diffusion systems on general meshes was already investigated in the past. 
One can cite the discrete duality finite volume (DDFV, see \cite{Herm:00,DoOm:05} for a presentation and analysis of these schemes applied to Poisson equations) scheme of Chainais-Hillairet \cite{Chainais:09}, which can handle general polygonal meshes with $d=2$ in the framework of Boltzmann statistics.
More recently, Su and Tang designed and analysed a scheme for Poisson--Nernst--Planck systems on general meshes in~\cite{SuTa:22}. This scheme is based on two discretisation methods: a virtual element method \cite{BdVBCMMR:13} 
for the Poisson equation alongside with a positive nonlinear finite volume method~\cite{BlLa:14,CaHe:16} for the convection-diffusion ones. The scheme is shown to admit solutions with positive densities and to have an entropic structure. However, it is restricted to the Boltzmann statistics with pure homogeneous Neumann boundary conditions and the Debye length is assumed to be $1$.
Apart from these finite volume schemes, many finite elements schemes were also designed for this type of drift-diffusion systems. Among them, one can cite the mixed and hybrid exponential fitting schemes~\cite{BrMaPi:89,BMaPi:89} of Brezzi, Marini and Pietra, which are proposed as finite elements generalisations of the \SG scheme~\cite{SchGu:69}, for linear diffusion on simplicial meshes. These schemes were then generalised to other statistics, especially in dimension 2, by J\"ungel and Pietra in~\cite{JuPi:97}. For other references about these schemes and their extensions, we refer the reader to~\cite{BMMPSW:05}.
Up to our knowledge, the only existing scheme which can handle the presence of a magnetic field in the model (anisotropic tensors $\Lambda_N$ and $\Lambda_P$) is the one introduced and analysed by Gajewski and G\"artner in~\cite{GaGa:96}.
It can be seen as a modified \SG scheme, and is hence restricted to Boltzmann statistics ($h = \log$). Moreover, the scheme is restricted to triangular meshes, and bound to some strong constraints between the mesh geometry and the magnetic field intensity. Last, the scheme does not preserve the positivity of the densities in the presence of a strong magnetic field.  


In this article, we are concerned with the design and the analysis of a numerical scheme for~\eqref{pb:DD:evol} that preserves the continuous features of the system (long-time behaviour and $I_h$-valuation of the densities). One of our main concerns is the ability of the scheme to handle the large variety of possible data: statistics function $h$, recombination-generation term $R$, physical data (doping $C$, Debye length $\lambda$, initial and boundary data, magnetic field) and discretisation data (mesh, time step).
Note that among the different motivations, for the support of general meshes is the desire to make local mesh adaptation much simpler in order to capture the boundary layers.
Such an approach based on a local adaptation has already been investigated in 1D~\cite{CaHu:19}, but should become much more complicated or even impossible to use in higher dimension on constrained meshes. Hence, the scheme presented here could be a possible way to use local mesh refinement in 2D or 3D.
To devise such a scheme, we use the hybrid finite volume (HFV) method, introduced in~\cite{EGaHe:10} by Eymard et al. in the framework of stationary diffusion problems. 
This method entails several interesting features: it can handle anisotropic and heterogeneous diffusion tensors alongside with very general polytopal meshes, and it benefits from a unified 1D/2D/3D formulation.
The scheme introduced here and its analysis are based on the entropy structure \eqref{eq:entrostruct} of the system, following the ideas of \cite{BCCHV:14,BCCH:17} (in the framework of drift-diffusion systems with TPFA schemes) and \cite{CaGui:17,CCHKr:18,CHHLM:22} (in the framework of advection-diffusion equations, with positivity-preserving schemes supporting anisotropy and general meshes). 
In particular, the nonlinearity induced by the function $h$ is discretised along the principles of the nonlinear HFV scheme introduced and analysed in~\cite{CHHLM:22} for advection-diffusion equations.
Using this approach, we get a unified scheme for the system~\eqref{pb:DD:evol}, robust with respect to the various data of the problem.
The main result of this paper, stated in Theorem~\ref{th:DD}, is the existence of solutions (with $I_h$-valued discrete densities) to the nonlinear scheme. In Theorem~\ref{th:longtime}, we establish the exponential decay of the discrete solutions towards the associated thermal equilibrium. These theoretical results are validated by various numerical simulations.

The article is organised as follows. In Section~\ref{sec:HFV}, we introduce the HFV framework (mesh, discrete unknowns, discrete operators), present the schemes for the equations~\eqref{pb:DD:evol} and a generalisation of~\eqref{pb:PoissonEqT} (namely, the semi-linear Poisson equation \eqref{pb:NLPoisson}), and state our main result. 
In Section~\ref{sec:sta}, we show that the stationary scheme is well posed, and discuss an important consequence, namely the correspondence between discrete densities and discrete quasi-Fermi potentials. 
In Section~\ref{sec:DD}, we analyse the scheme for the transient problem, showing that it has an entropy structure and admits solutions with $I_h$-valued densities whose  long-time behaviour mimics that of the continuous solutions.
In Section~\ref{sec:num}, we discuss the implementation of the schemes, and give some numerical evidences of our theoretical results. 
Finally, in Appendix~\ref{ap:lemma}, we state and prove a technical result that is instrumental to show the existence of solutions.

\section{Discrete setting and schemes} \label{sec:HFV}

The aim of this section is to recall the HFV framework for diffusive problems, and introduce the schemes for the discretisation of the drift-diffusion system~\eqref{pb:DD:evol} and the nonlinear Poisson equation~\eqref{pb:NLPoisson}. Here, for the purpose of analysis, we present the schemes from the finite element viewpoint. 
We refer the reader to Section~\ref{sec:flux} for a presentation of the schemes within the framework of finite volume methods (especially for the purpose of implementation).
\newline 
For a detailed presentation of the HFV method in the framework of a steady variable diffusion problem with symmetric tensor, we refer the reader to~\cite{EGaHe:10}.

\subsection{Mesh, discrete unknowns and boundary data} \label{sec:discre}

The definitions and notation we adopt for the discretisation are essentially the same as in \cite{EGaHe:10}.
A discretisation of the (open, bounded, connected) polytopal set $\Omega \subset \R ^d $, $d\in \{1, 2,3\}$, is defined as a triplet $\mathcal{D} = ( \M, \E, \mathcal{P})$, where:
\begin{itemize}
		\item $\M$ (the mesh) is a partition of $\Omega$, i.e., a finite family of nonempty disjoint (open, connected) polytopal subsets $K$ of $\Omega$ (the mesh {cells}) such that (i) for all $K \in \M$, $|K|>0$, and (ii) $\overline{\Omega} = \bigcup _ {K \in \M } \overline{K} $.
	
		\item $\E$ (the set of faces) is a partition of the mesh skeleton $\bigcup_{K\in\M}\partial K$, i.e., a finite family of nonempty disjoint (open, connected) subsets $\s$ of $\overline{\Omega}$ (the mesh faces, or mesh edges if $d = 2$) such that for all $\s\in\E$, $|\s|>0$ and there exists $\mathcal{H}_\s$ affine hyperplane of $\R^d$ such that $\s\subset\mathcal{H}_\s$, and $\bigcup_{K\in\M}\partial K=\bigcup_{\s\in\E}\overline{\s}$. We assume that, for all $K \in \M$, there exists $\E_K \subset \E$ (the set of faces of $K$) such that $\partial K = \bigcup_{\s \in \E_K} \overline{\s}$. For $\s \in \E$, we let $\M_\s  =  \lbrace K \in  \M \mid \s \in \E_K \rbrace $ be the set of cells whose $\s$ is a face. Then, for all $\s \in \E$, either $\M_\s=\{K\}$ for a cell $K\in\M$, in which case $\s$ is a boundary face ($\s \subset \partial \Omega$) and we note $\s \in \E_{ext}$, or $\M_\s = \lbrace K,L \rbrace$ for two cells $K,L\in\M$, in which case $\s$ is an interface and we note $\s = K|L \in \E_{int} $.
		
		\item $\mathcal{P}$ (the set of cell centres) is a finite family $\{x_K\}_{K \in \M}$ of points of $\Omega$ such that, for all $K \in \M$, (i) $x_K \in K$, and (ii) $K$ is star-shaped with respect to $x_K$. Moreover, we assume that the Euclidean (orthogonal) distance $d_{K,\s}$ between $x_K$ and the affine hyperplane $\mathcal{H}_\s$ containing $\s$ is positive (equivalently, the cell $K$ is strictly star-shaped with respect to $x_K$).
\end{itemize}
For a given discretisation $\D$, we denote by $h_\D>0$ the size of the discretisation (the meshsize), defined by $h_\D  = \underset{K \in \M}{\sup} h_K $ where, for all $K \in \M$, $h_K = \underset{x,y \in \overline{K}}{\sup} |x-y|$ is the diameter of the cell $K$.
For all $\s\in\E$, we let ${\overline{x}_\s} \in \s$ be the barycentre of $\s$.
Finally, for all $K\in\M$, and all $\s \in \E_K$, we let $n_{K,\s} \in \R^d$ be the unit normal vector to $\s$ pointing outward $K$, and $P_{K, \s}$ be the (open) pyramid of base $\s$ and apex $x_K$ (notice that, when $d=2$, $P_{K,\s}$ is always a triangle). Since $|\s|$ and $d_{K,\s}$ are positive, we have $|P_{K,\s}|=\frac{|\s|d_{K,\s}}{d}>0$.
We depict on Figure \ref{Maillage} an example of discretisation. Notice that the mesh cells are not assumed to be convex, neither $x_K$ is  assumed to be the barycentre of $K\in\M$. Moreover, hanging nodes are seamlessly handled with our assumptions, so that meshes with non-conforming cells are allowed (see the orange cross in Figure \ref{Maillage}; the cell $K$ therein is treated as an hexagon).
\begin{figure}[h!]
\begin{center}
{\scalefont{0.9}
\def\svgwidth{0.7\textwidth}
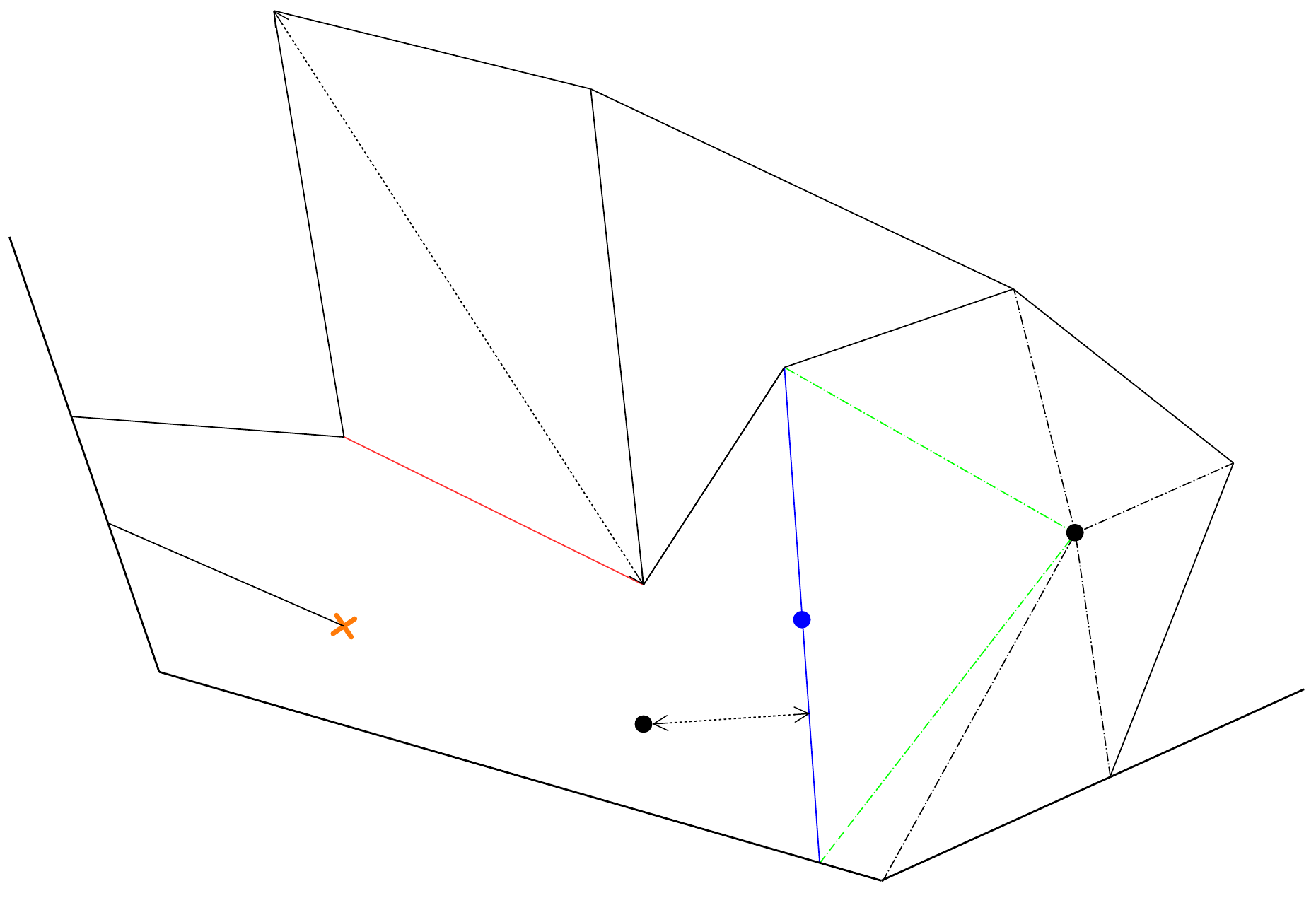
}
\end{center}
\caption{Two-dimensional discretisation and corresponding notations.}
\label{Maillage}
\end{figure}
\newline
We consider the following measure of regularity for the discretisation (which is the same as in \cite[Eq.~(2.1)]{CHHLM:22}): 
\begin{equation} \label{def:regmesh}
  \theta_\D  =  \max \left (  \underset{K \in \M, \s \in \E_K}{\max} \frac{h_K}{d_{K,\s}} , \underset{\s \in \E, K \in \M_\s}{\max} \frac{h_K^{d-1}}{|\s|} \right ).
\end{equation}

We now introduce the set of (hybrid, cell- and face-based) discrete unknowns: 
\begin{equation*}
		\V _\D   =  \big\lbrace \v_{\D} =\big( (v_K )_{K \in \M } ,  (v_\s)_{\s \in \E} \big) \mid v_K \in \R\;\forall K\in\M, v_\s \in \R\;\forall\s\in\E \big\rbrace.
\end{equation*}
Given a mesh cell $K\in\M$, we let $\V_K = \R\times\R^{|\E_K|}$ be the restriction of $\V_{\D}$ to $K$, and $\v_K=\big(v_K,(v_{\s})_{\s\in\E_K}\big)\in\V_K$ be the restriction of a generic element $\v_{\D}\in\V_{\D}$ to $K$.
Also, for $\v_{\D}\in\V_{\D}$, we let $v_\M : \Omega \to \R$ and $v_\E : \bigcup _{K\in\M} \partial K \to \R$ be the piecewise constant functions such that
\[
	v_{\M\mid K} = v_K \text{ for all } K\in\M, \quad\text{ and }  \quad v_{\E\mid\s} = v_\s \text{ for all } \s \in \E.
\]
For further use, we let $\one_{\D}$ (respectively $\zero_\D$) denote the element of $\V_{\D}$ with all coordinates equal to $1$ (respectively  $0$).
Also, given a function $f: \R \to \R$, and with a slight abuse in notation, we denote by $f(\v_{\D})$ the element of $\V_{\D}$ whose coordinates are the $(f(v_K))_{K\in \M}$ and the $(f(v_\s))_{\s \in \E}$. Given $\u_\D$ and $\v_\D$ two elements of $\V_\D$, we say that $\u_\D \leq \v_\D$ (respectively $<$, $\geq$, $>$) if and only if for any $K \in \M$ and $\s \in \E$, $u_K \leq v_K$ and $u_\s \leq v_\s$ (respectively $<$, $\geq$, $>$).
In particular, a vector of unknowns $\v_\D \in \V_\D$ is $I_h$-valued if and only if
$\zero_\D < \v_\D < \dmax \one_\D$. Note that $I_h$-valued vectors have therefore positive coordinates.

We assume that the discretisation $\D$ is compliant with the partition $\partial \Omega = \overline{\Gamma^D} \cup \overline{\Gamma ^N}$ of the boundary of the domain, in the sense that the set $\E_{ext}$ can be split into two disjoint subsets $\E_{ext}^D = \left \{ \s \in \E_{ext} \mid \s \subset \Gamma^D \right \}$ and $\E_{ext}^N = \left \{ \s \in \E_{ext} \mid \s \subset \Gamma^N \right \}$ such that $\E_{ext} = \E_{ext}^D \cup \E_{ext}^N$. Notice that, since $|\Gamma^D|>0$, one has $|\E_{ext}^D|\geq 1$. We define the following subspace of $\V_{\D}$, enforcing strongly a homogeneous Dirichlet boundary condition on $\Gamma^D$:
\[
	\V_{\D,0}  =  \left \lbrace \v_{\D} \in \V_{\D} \mid \forall \s \in \E_{ext}^D, \ v_\s = 0 \right \rbrace.
\]
In view of the upcoming analysis, we define a discrete counterpart of the $H^1$ semi-norm. Locally to any cell $K\in\M$, we let, for any $\v_K\in\V_K$, $|\v_K|_{1,K}^2 = \sum _{\s \in \E _K} \frac{|\s|}{d_{K,\s}}(v_K - v_\s)^2$. At the global level, for any $\v_{\D} \in \V_\D$, we let
\begin{equation*}
|\v_{\D}|_{1, \D}  =  \sqrt{\sum_ {K \in \M} |\v_K|_{1,K}^2 }.
\end{equation*}
Notice that $|\cdot|_{1,\D}$ does not define a norm on $\V_\D$, but if $|\v_{\D}|_{1, \D}= 0$, then there is $c \in \R$ such that $\v_{\D} = c \,\one_{\D}$ ($\v_{\D}$ is constant).
Thus, $|\cdot|_{1,\D}$ defines a norm on the space $\V_{\D,0}$.

We finally introduce discrete hybrid (Dirichlet) boundary data. The definitions below are motivated by the need to preserve the compatibility condition~\eqref{eq:compcond} at the discrete level. First, we define the discrete liftings of the densities $\Nd_\D^D$ and $\Pd_\D^D$ as the elements of $\V_\D$ such that for any $K \in \M$ and any $\s \in \E$, 
\begin{equation}\label{def:discreteN}
	N_\s^D = g \left ( \frac{1}{|\s|} \int_\s h(N^D) \right ) \text{ and }
	N_K^D = g \left ( \frac{1}{|K|} \int_K h(N^D) \right ), 
\end{equation}
\begin{equation}\label{def:discreteP}
	P_\s^D = g \left ( \frac{1}{|\s|} \int_\s h(P^D) \right ) \text{ and }
	P_K^D = g \left ( \frac{1}{|K|} \int_K h(P^D) \right ) .
\end{equation}
We also define the hybrid interpolate $\phid_\D^D \in \V_\D$ of the lifting $\phi^D$, defined by 
$\phi^D_K = \frac{1}{|K|}\int_K \phi^D$ for any $K \in \M$ and $\phi^D_\s = \frac{1}{|\s|}\int_\s \phi^D$ for all $\s \in \E$. One has (see~\cite[Proposition B.7]{DEGGH:18}) the following stability results: 
\begin{equation}\label{eq:boundlifting}
	|\phid_\D^D|_{1,\D} \leq C_{l,\Gamma^D} \|\phi^D\|_{H^{\nicefrac12} (\Gamma^D)} \text{ and }
		- \|\phi^D \|_{L^\infty(\Omega)} \one_\D \leq \phid_\D^D \leq \|\phi^D \|_{L^\infty(\Omega)} \one_\D, 
\end{equation}	
where $C_{l,\Gamma^D}$ is a positive constant only depending on $\theta_\D$, $\Omega$ and $\Gamma^D$. By~\eqref{eq:compcond} and \eqref{def:discreteN}-\eqref{def:discreteP}, the following compatibility condition, discrete counterpart of~\eqref{eq:compcond}, holds:
\begin{equation}\label{eq:compconddiscret}
	\forall \s \in \E_{ext}^D, \ h(N_\s^D) - \phi_\s^D = \alpha_N
	\text{ and } 
	 h(P_\s^D) + \phi_\s^D = \alpha_P.
\end{equation}
\begin{rem}[Discrete liftings]\label{rem:lifting}
The definition of the discrete boundary densities~\eqref{def:discreteN} and~\eqref{def:discreteP} is driven by the desire to obtain the discrete compatibility condition~\eqref{eq:compconddiscret}. Such definition boils down to discretise quantities carrying a physical meaning, homogeneous to the quasi-Fermi potentials. Moreover, this definition ensures that $\Nd_\D^D$ and $\Pd_\D^D$ are $I_h$-valued.
Notice that in practice, if $N^D$ is regular enough, one can choose to approximate $ \frac{1}{|\s|} \int_\s h(N^D)$ by $h(N^D(\overline{x}_\s))$ (respectively, $ \frac{1}{|K|} \int_K h(N^D)$ by $h(N^D(x_K))$ if $x_K$ is chosen to be the barycenter of $K$), and therefore find a classical expression for the discrete liftings, namely $N_\s^D = N^D(\overline{x}_\s)$ and $N_K^D = N^D(x_K)$.
The same holds true for $\Pd_\D ^D$.
\end{rem}
\subsection{Foundations of the hybrid finite volume method}
The HFV method hinges on the definition of a discrete gradient operator $\nabla_{\D}$, that maps any element $\v_{\D}\in\V_{\D}$ to a piecewise constant $\R^d$-valued function on the pyramidal submesh of $\M$ formed by all the $P_{K,\s}$'s, for $K\in\M$ and $\s\in\E_K$. More precisely, for all $K\in\M$, and all $\s\in\E_K$,
\begin{equation*}
  {\nabla_{\D}\v_{\D}}_{\mid K} = \nabla_K\v_K\quad\text{with}\quad{\nabla_K \v_K}_{\mid P_{K,\s}}  =  \nabla_{K,\s} \v_K= G_K \v_K + S_{K,\s} \v_K\in\R^d,
\end{equation*}
where $G_K\v_K$ is the consistent part of the gradient given by
$$G_K\v_K=\frac{1}{|K|}\sum_{\s'\in\E_K}|\s'|(v_{\s'}-v_K)n_{K,\s'}=\frac{1}{|K|}\sum_{{\s'}\in\E_K}|\s'|v_{\s'}n_{K,\s'},$$
and $S_{K,\s}\v_K$ is a stabilisation, given, for some parameter $\eta>0$, by
\begin{equation} \label{def:stabilisation}
	S_{K,\s}\v_K=\frac{\eta}{d_{K,\s}}\big(v_\s-v_K-G_K\v_K\cdot(\overline{x}_\s-x_K)\big)n_{K,\s}.
\end{equation}
Let us consider a generic tensor $\Lambda \in L^\infty(\Omega, \R^{d\times d})$, such that for any $\xi \in \R^d$, $\lambda_\flat |\xi|^2 \leq  \xi \cdot \Lambda \xi$ and $ |\Lambda \xi| \leq \lambda_\sharp |\xi|$.
Locally to any cell $K\in\M$, we introduce the discrete bilinear form $a^{\Lambda}_K:\V_K\times\V_K\to\R$ such that, for all $\u_K,\v_K\in\V_K$,
\begin{equation} \label{def:aKL}
  {a^{\Lambda}_K}(\u_K,\v_K) = \sum_{\s\in\E_K}|P_{K,\s}|\nabla_{K,\s}\v_K\cdot\Lambda_{K,\s}\nabla_{K,\s}\u_K= \int_K  \nabla_K \v_K \cdot \Lambda\nabla_K \u_K
\end{equation}
where we set {$\Lambda_{K,\s} = \frac{1}{|P_{K,\s}|}\int_{P_{K,\s}}\Lambda$}.
At the global level, we let $a^{\Lambda}_{\D}:\V_{\D}\times\V_{\D}\to\R$ be the discrete bilinear form such that, for all $\u_{\D},\v_{\D}\in\V_{\D}$,
\begin{equation*}
  a^{\Lambda}_\D (\u_{\D},\v_{\D}) = \sum_{K\in\M}a^{\Lambda}_K(\u_K,\v_K)= \int_\Omega \nabla_\D \v_\D \cdot \Lambda\nabla_\D \u_{\D}.
\end{equation*}

As for the continuous case, the well-posedness of HFV methods for diffusion problems relies on a coercivity argument.
Let $K\in\M$, and reason locally. By definition~\eqref{def:aKL} of the local discrete bilinear form $a_K^{\Lambda}$, and from the bounds on the diffusion coefficient, we have $\lambda_\flat \|\nabla_K \v_K\|_{L^2(K;\R^d)}^2 \leq a_K^{\Lambda}(\v_K,\v_K) \leq \lambda_\sharp \|\nabla_K \v_K\|_{L^2(K;\R^d)}^2$ for all $\v_K\in\V_K$.
Furthermore, the following comparison result holds (cf.~\cite[Lemma 13.11]{DEGGH:18}): there exist {$\alpha_\flat,\alpha_\sharp$} with $0<\alpha_\flat\leq\alpha_\sharp<\infty$, only depending on $\Omega$, $d$, and $\theta_\D$ such that 
$\alpha_\flat |\v_K|_{1,K}^2\leq \|\nabla_K \v_K\|_{L^2(K;\R^d)} ^2\leq  \alpha_\sharp |\v_K|_{1,K}^2$ for all $\v_K\in\V_K$.
Combining both estimates, we infer a local coercivity and boundedness result:
\begin{equation} \label{def:localcoercivity}
	\forall \v_K\in \V_K, \qquad\lambda_\flat\alpha_\flat |\v_K|_{1,K}^2
	\leq a^{\Lambda}_K(\v_K,\v_K)
	\leq  \lambda_\sharp\alpha_\sharp |\v_K|_{1,K}^2.
\end{equation}
Summing over $K \in \M$, we get the following global estimates:
\begin{equation} \label{def:globalcoercivity}
	\forall \v_{\D} \in \V_{\D}, \qquad  \lambda_\flat\alpha_\flat | \v_{\D} |_{1,\D}^2 \leq a^{\Lambda}_\D(\v_{\D},\v_{\D}) \leq \lambda_\sharp \alpha_\sharp | \v_{\D} |_{1,\D}^2.
\end{equation}
Last, we recall the hybrid discrete Poincar\'e inequality (cf.~\cite[Lemma B.32, $p=2$] {DEGGH:18}): there exists $C_{P,\Gamma^D}>0$, only depending on $\Omega$, $d$, $\Gamma^D$, and $\theta_\D$ such that 
\begin{equation} \label{eq:poinca}
	\forall \v_{\D} \in \V_{\D,0}, \qquad \|v_\M \|_{L^2(\Omega)} \leq {C_{P,\Gamma^D}} | \v_{\D} |_{1,\D}.
\end{equation}
In the sequel, we will omit the tensor in superscript and use the following notations instead:
\begin{equation}
	a_\D^\phi = a_\D^{\Lambda_\phi}, \ a_\D^P = a_\D^{\Lambda_P} \text{ and }a_\D^N = a_\D^{\Lambda_N}.
\end{equation} 
One can note that, since $\Lambda_\phi$ is symmetric, one has 
\begin{equation} \label{eq:contbilin}
	\forall(\u_\D, \v_\D) \in \V_{\D,0}^2, \
		 |a_\D^\phi(u_\D, \v_\D)| \leq \alpha_\sharp \lambda^\phi_\sharp |\u_\D|_{1,\D} |\v_\D|_{1,\D}.
\end{equation}

For further use, we introduce the linear form $L_\D^\phi : \V_{\D} \to \R$ such that 
\begin{equation}
	\forall \v_\D \in \V_\D, \, L_\D^\phi(\v_D) = \int_\Omega C  v_\M  - a_\D^\phi(\phid_\D^D, \v_\D).
\end{equation}
Note that, according to~\eqref{eq:contbilin}, the Poincar\'e inequality \eqref{eq:poinca} and the estimates~\eqref{eq:boundlifting}, one has
\begin{equation} \label{def:contL}
	\forall \v_\D \in \V_{\D,0}, \; | L_\D^\phi(\v_D) | \leq 
	\|C\|_{L^2(\Omega)}  \| v_\M \|_{L^2(\Omega)} + \alpha_\sharp \lambda_\sharp |\phid_\D^D|_{1,\D} |\v_\D|_{1,\D}
	\leq L_\sharp^\phi \ |\v_\D|_{1,\D},
\end{equation}
where there exists $C_L > 0$, only depending on $\Omega$, $\Gamma^D$, $d$, $\Lambda_\phi$ and $\theta_\D$ such that 
\[L_\sharp^\phi \leq C_L \left (\|C\|_{L^2(\Omega)} + \| \phi^D \|_{H^{\nicefrac12}(\Gamma^D)} \right).\]
\subsection{Description of the schemes and main theorem}
In this section, we introduce an HFV scheme for Problem \eqref{pb:DD:evol}.
The scheme is designed so as to preserve the entropic structure presented in the introduction.
In the sequel, we consider a fixed spatial discretisation $\D$ of $\Omega$ which satisfies the assumptions of Section \ref{sec:discre}, and a fixed time step $\Delta t >  0$ for the time dicretisation. For any $n \in \mathbb{N}$, we let $t^n = n \Delta t $.

Our HFV scheme relies on the hybrid nonlinear discretisation introduced in~\cite{CHHLM:22} for linear advection-diffusion equations, which was in turn inspired from the VAG and DDFV schemes of~\cite{CaGui:17,CCHKr:18}. We refer to~\cite{CHHLM:22} for a detailed description of the discretisation (with $h=\log$).
The common idea of these schemes is to discretise the flux in a nonlinear way: formally, given a function $u$ with values in $I_h$ and a function $\phi$, the problem $\divergence (- u \Lambda \nabla ( h(u) + \phi)) = 0 $ can be expressed in term of the flux
\[
	J = -u \Lambda \nabla(h(u) + \phi) = -u \Lambda \nabla w, 
\]
where $w = h(u) + \phi$ is a quasi-Fermi potential.
Therefore, locally to any cell $K\in\M$, we define an approximation of
\begin{equation*}
 (u,w,v) \mapsto -\int_K J\cdot\nabla v= \int_K u \,\Lambda\nabla w  \cdot  \nabla v
\end{equation*}
under the form
\[
	T_K^\Lambda (\u_K,\w_K,\v_K) = \int _K r_K(\u_K) \,\Lambda\nabla_K \w_K \cdot \nabla_K \v_K
\]
for all $I_h$-valued $\u_K\in\V_K$ and all $(\w_K,\v_K)\in\V_K^2$, where $r_K : \V_K \to I_h$, called local reconstruction operator, maps $I_h$-valued elements of $\V_K$ to $I_h$.
Since $r_K(\u_K)$ is a (positive) constant on $K$, we have
\begin{equation} \label{def:TK}
  T_K^\Lambda(\u_K,\w_K,\v_K)=r_K(\u_K) a^\Lambda_K\big(\w_K,\v_K\big),
\end{equation}
where $a^\Lambda_K$ is defined by~\eqref{def:aKL}. 
As already pointed out in previous works on similar nonlinear schemes \cite{Cances:18,CCHKr:18,CCHHK:20,CHHLM:22}, the definition of the local reconstruction operator is crucial to guarantee the existence of solutions as well as the long-time behaviour of the discrete solutions.
Especially, we use a reconstruction which embeds information from both the local cell and face unknowns. It is a natural generalisation of the one introduced in~\cite{CHHLM:22}.
For $\u_K$ an $I_h$-valued element of $\V_K$, we let
\begin{equation} \label{eq:rK}
  r_K (\u_K) =  \frac{1}{|\E_K|}\sum_{\s\in\E_K} m(u_K,u_\s)
\end{equation}
where $m : I_h^2 \to I_h $ is a continuous (mean-)function satisfying 
\begin{equation}\label{hyp:m}
 \forall (x,y) \in I_h^2, \, m_h(x,y) \leq m(x,y) \leq \max(x,y), 
\end{equation}
and the function $m_h$ is defined by 
\begin{equation}\label{def:mh}
	m_h(x,y) = \frac{[xh(x) - H(x)] - \left [ yh(y) - H(y)\right ]}{h(x) - h(y)} \text{ if } x\neq y
	\ \text{ and } m_h(x,x) = x.
\end{equation} 
This function $m_h$ was used in~\cite[Definition 3.3]{FiHer:17} to analyse an entropic scheme for nonlinear advection-diffusion equations.
One can note that in the case of the Boltzmann statistics ($h = \log$), $m_h(x,y)$ is the classical logarithmic average of $x$ and $y$, which coincides with the assumptions of~\cite{CHHLM:22}. 
Moreover, for any $(x,y) \in \R^2$, one has 
$m_h\left ( g(x),g(y) \right ) =  \frac{xg(x)- H(g(x)) - \left [ yg(y) - H(g(y))\right ]}{x -y }$.
Since the derivative of $x \mapsto xg(x)- H(g(x))$ is $g$, the following simple expression holds: 
\begin{equation} \label{eq:mhg}
	\forall (x,y) \in \R^2, \, m_h \left ( g(x) , g(y)  \right )
		= \frac{G(x) - G(y)}{x-y}.
\end{equation}
Note that, for all $(x,y) \in I_h^2$, one has 
$ m_h(x,y)  \leq \frac{x+y}{2} \leq \max(x,y)$,
and each expression of the previous sequence is a mean function $m$ satisfying \eqref{hyp:m}.
Heuristically, $r_K(\u_K)$ computes an average of the unknowns attached to the cell $K$, especially it contains information about all the local face unknowns.
The property~\eqref{hyp:m} will be instrumental to prove Lemma~\ref{lem:bounds} and the existence of solutions to the scheme.
Finally, we let $T_\D^\Lambda$ be such that, for all $I_h$-valued $\u_\D\in\V_\D$, and all $(\w_\D,\v_\D) \in \V_\D^2$,
\begin{equation} \label{sch2}
  T_\D^\Lambda(\u_\D,\w_\D,\v_\D) = \sum_{K\in\M}T_K^\Lambda(\u_K,\w_K,\v_K),
\end{equation}
where the local contributions $T_K^\Lambda$ are defined by~\eqref{def:TK}. In the sequel, we will use the following notation 
\[ 	
	 T_\D^{N}= T_\D^{\Lambda_N} \text{ and } 	T_\D^{P} = T_\D^{\Lambda_P} =  \text{ (\emph{resp.}, }
		 T_K^{N} = T_K^{\Lambda_N} \text{ and } 	  T_K^{P}=T_K^{\Lambda_P}
	\text{)}.
\]

Using a backward Euler discretisation in time, and the nonlinear HFV discretisation previously described in space, we consider the following scheme for the drift-diffusion system~\eqref{pb:DD:evol}: 
find $\left ( (\Nd_\D^n, \Pd_\D^n, \phid_\D^n) \right ) _{n \in \mathbb{N}} \in {\V_\D^3 }^\mathbb{N}$ such that 
\begin{subequations}\label{sch:DD}
        \begin{empheq}[left = \empheqlbrace]{align}
        	\w_\D^{N,n+1}&= h \left( \Nd_\D^{n+1}\right) - \phid_\D ^{n+1} - \alpha_N \one_\D \in \V_{\D,0}, \label{sch:DD:w-}\\
        	\w_\D^{P,n+1}&= h \left( \Pd_\D^{n+1}\right) + \phid_\D ^{n+1} - \alpha_P \one_\D \in \V_{\D,0}, \label{sch:DD:w+}\\
        \psid_\D ^{n+1}&= \phid_\D ^{n+1} - \phid_\D^D \in \V_{\D,0}, \label{sch:DD:psi}\\
	     \int_\Omega \frac{N_\M^{n+1} - N_\M^n}{\Delta t} v_\M +  T_\D^N (\Nd_\D^{n+1}, \w_\D^{N,n+1}, \v_\D)
	     	&= -\int_\Omega R(N_\M^{n+1},P_\M^{n+1}) v_\M  \quad\,\forall\v_{\D}\in\V_{\D,0},\label{sch:DD:N}  \\
	     \int_\Omega \frac{P_\M^{n+1} - P_\M^n}{\Delta t} v_\M +  T_\D^P (\Pd_\D^{n+1}, \w_\D^{P,n+1}, \v_\D)
	     	&= -\int_\Omega R(N_\M^{n+1},P_\M^{n+1}) v_\M  \quad\,\forall\v_{\D}\in\V_{\D,0},\label{sch:DD:P}  \\
	     a_\D^\phi(\psid_\D^{n+1}, \v_\D) - L_\D^\phi(\v_\D) &= \int_\Omega \left (P_\M^{n+1} - N_\M^{n+1} \right ) v_\M  
	     	\;\;\quad\:\forall\v_{\D}\in\V_{\D,0},\label{sch:DD:phi}  \\
            N^0_K = \frac{1}{|K|}\int_K N^{in } \text{ and } P^0_K &= \frac{1}{|K|}\int_K P^{in } \qquad\forall K\in\M. \label{sch:DD:ini} 
        \end{empheq}
\end{subequations}
Notice that the equations \eqref{sch:DD:w-} and \eqref{sch:DD:w+} imply that, for any $n \geq 1$, 
\begin{equation} \label{def:densities}
 \Nd_\D^{n} = g \left ( \w_\D^{N,n} + \phid_\D ^{n} + \alpha_N \one_\D\right ) 
 	\text{ and }
 \Pd_\D^{n} = g \left ( \w_\D^{P,n} - \phid_\D ^{n} + \alpha_P \one_\D\right ),
\end{equation}
therefore, since $g(\R) = I_h$, the discrete carrier densities $\Nd_\D^{n}$ and $\Pd_\D^{n}$ of a solution to \eqref{sch:DD} are $I_h$-valued.
\begin{rem}[Discrete potentials and boundary values]\label{rem:boundvalues}
The definitions of the discrete quasi-Fermi potentials~\eqref{sch:DD:w-} and~\eqref{sch:DD:w+} are used to enforce
the boundary conditions on the discrete unknowns. Indeed, since $\w_\D^{N,n} \in \V_{\D,0}$, from \eqref{def:densities} we get that for any $\s\in\E^D_{ext}$, $N^n_\s =   g \left ( 0 + \phi_\s ^D + \alpha_N \right )  = N^D_\s$ according to the compatibility condition~\eqref{eq:compconddiscret}. A similar result holds for $\Pd_\D^n$.
In fact, by definition of the trilinear forms, one can rewrite in a more natural way the equations~\eqref{sch:DD:N} and~\eqref{sch:DD:P} noticing that
\[ T_\D^N (\Nd_\D^{n+1}, \w_\D^{N,n+1} , \v_\D^N) = T_\D^N (\Nd_\D^{n+1}, h\left ( \Nd_\D^{n+1}\right) - \phid_\D ^{n+1} , \v_\D^N), \text{ and } \]
\[  T_\D^P (\Pd_\D^{n+1}, \w_\D^{P,n+1}, \v_\D^P) = T_\D^P (\Pd_\D^{n+1}, h \left( \Pd_\D^{n+1}\right) + \phid_\D ^{n+1}, \v_\D^P). \]
When considering general boundary conditions (i.e., if $\alpha_N$ and $\alpha_P$ are no longer constant on $\Gamma^D$, and therefore with liftings no longer constant in $\Omega$), the scheme will be the same, except for the equations~\eqref{sch:DD:N} and~\eqref{sch:DD:P} where arguments of the trilinear forms will be 
\[ 
	T_\D^N (\Nd_\D^{n+1}, \w_\D^{N,n+1} + \underline{\alpha_N}_\D, \v_\D^N) \text{ and }T_\D^P (\Pd_\D^{n+1}, \w_\D^{P,n+1}+				\underline{\alpha_P}_\D, \v_\D^P).
\]
The analysis of such a scheme is more sophisticated than the one presented here, and shall be the subject of a future work.
\end{rem}
Plugging the expression of the discrete carrier densities~\eqref{def:densities} into Equation~\eqref{sch:DD:phi}, we get an elliptic semi-linear equation on $\psid_\D^{n+1}$. This equation will be of great importance in the analysis. At the continuous level, it corresponds to the following semi-linear Poisson equation: 
\begin{equation} \label{pb:NLPoisson}
	\left\{
	\begin{aligned}
		- \divergence \left ( \Lambda_\phi \nabla \phi \right ) &= C + g(z^P - \phi) - g(z^N + \phi)  &&\text{ in } \Omega, \\
		  \phi&= \phi^D &&\text{ on } \Gamma^D, \\
		 \Lambda_\phi \nabla \phi  \cdot n &= 0 &&\text{ on } \Gamma^N, 
	\end{aligned}
	\right.
\end{equation} 
where $z^P$ and $z^N$ are given functions in $L^\infty(\Omega)$.
\begin{rem}[Electrostatic potential at thermal equilibrium]\label{rem:poteq}
If $z^P$ and $z^N$ are constant with respective values $\alpha_P$ and $\alpha_N$ in $\Omega$, then the solution to \eqref{pb:NLPoisson} is the electrostatic potential at thermal equilibrium $\phi^e$ solution to \eqref{pb:PoissonEqT}.
\end{rem} 
We introduce an HFV scheme to approximate \eqref{pb:NLPoisson}: 
find $\phid_\D = \psid_\D + \phid_\D^D \in \V_\D$ where $\psid_\D \in \V_{\D,0}$ is such that
\begin{equation}\label{sch:NLPoisson}
	\forall \v_\D \in \V_{\D,0}, \ a_\D^\phi(\psid_\D, \v_\D) = L_\D^\phi(\v_\D) 
		+ \int_\Omega \left [ g(z^P_\M - \phi_\M^D - \psi_\M )  - g(z^N_\M + \phi_\M^D + \psi_\M ) \right ] v_\M,
\end{equation}
where $z^P_\M$ and $z^N_\M$ are the piecewise constant functions such that, for any cell $K \in \M$, $z^P_\M = \frac{1}{|K|}\int_K z^P$ and $z^N_\M = \frac{1}{|K|}\int_K z^N$on $K$.
We will prove in Theorem~\ref{th:discreteNLPoisson} that this scheme is well-posed. Especially, in the light of Remark~\ref{rem:poteq}, one can define the discrete thermal equilibrium $(\Nd_\D^e, \Pd_\D^e, \phid_\D^e) \in \V_\D ^3$ as the following triplet of unknowns: 
\begin{equation} \label{def:discreteequ}
	\left\{
	\begin{aligned}
			\phid_\D^e &= \phid_\D^D + \psid_\D^e, \text{ where } \psid_\D^e \in \V_{\D,0} \text{ is the solution to~\eqref{sch:NLPoisson} with} (z^N,z^P) = (\alpha_N,\alpha_P),  \\
			\Nd_\D^e &= g(\alpha_N \one_\D + \phid_\D^e),  \\
			\Pd_\D^e &= g(\alpha_P \one_\D - \phid_\D^e).
	\end{aligned}
	\right.
\end{equation}
Note that, by definition, the discrete densities at thermal equilibrium $\Nd_\D^e$ and $\Pd_\D^e$ are $I_h$-valued. 
\newline
We remind the reader that alternative formulations (in the spirit of finite volume methods) of the schemes~\eqref{sch:DD} and~\eqref{sch:NLPoisson} are presented in Section~\ref{sec:flux}. 

We are now in position to state the main result of this paper, whose proof is the subject of Section~\ref{sec:DD}.
\begin{theorem}[Existence of discrete $I_h$-valued solutions to~\eqref{sch:DD}] \label{th:DD}
Let $N^{in}$ and $P^{in}$ two positive functions in $L^\infty(\Omega)$ satisfying the condition~\eqref{eq:bounddata}.
There exists at least one solution $\left ( (\Nd_\D^n, \Pd_\D^n, \phid_\D^n) \right ) _{n \in \mathbb{N}} \in {\V_\D^3 }^\mathbb{N}$ to the coupled scheme~\eqref{sch:DD}. It satisfies the following entropy-dissipation relation:
\begin{equation} \label{eq:diss}
	\forall n \in \mathbb{N}, \quad \frac{\N^{n+1} - \N ^n }{\Delta t} + \Diss^{n+1} \leq 0,
\end{equation}
where $\N ^n$ and $\Diss^{n}$ are, respectively, the discrete relative entropy and dissipation at time $t^n = n \Delta t$, defined by 
\begin{multline} \label{def:entroth}
	\N^n = \int_\Omega H(N_\M^n) - H(N^e_\M) - h(N_\M^e) \left (N_\M^n - N_\M^e \right )    \\
		+ \int_\Omega H(P_\M^n) - H(P^e_\M) - h(P_\M^e) \left (P_\M^n - P_\M^e \right )   
	 	+ \frac{1}{2}a^\phi_\D(\phid_\D^n - \phid_\D^e,\phid_\D^n - \phid_\D^e), 
\end{multline} 
\begin{equation} \label{def:dissth}
	\text{ and }\Diss^n  =  T_\D^N (\Nd_\D^n, \w_\D^{N,n}, \w_\D^{N,n}) + T_\D^P (\Pd_\D^n, \w_\D^{P,n}, \w_\D^{P,n})
		+ \int_\Omega  R(N_\M^n,P_\M^n)\left ( h(N_\M^n) + h(P_\M^n) \right ) .
\end{equation}
Moreover, there exist $ 0 < M_\flat  \leq M_\sharp < \dmax$, depending on the physical data, $\Delta t$, and $\D$ such that
\begin{equation} \label{eq:unifpos}
	\forall n \geq 1, \qquad M_\flat  \one_\D \leq \Nd^n_\D \leq M_\sharp \one_\D  
		\text{ and }  M_\flat  \one_\D \leq \Pd^n_\D \leq M_\sharp \one_\D.
\end{equation}
\end{theorem}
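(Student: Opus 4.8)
The plan is to argue by induction on the time index $n$: assuming $(\Nd_\D^n,\Pd_\D^n)$ is known and $I_h$-valued, I would establish the existence of $(\Nd_\D^{n+1},\Pd_\D^{n+1},\phid_\D^{n+1})$ solving one step of \eqref{sch:DD}, together with \eqref{eq:diss} and \eqref{eq:unifpos}; the base case is fixed by \eqref{sch:DD:ini} and \eqref{eq:bounddata}. Because of \eqref{def:densities}, once the discrete quasi-Fermi potentials $\w_\D^{N,n+1},\w_\D^{P,n+1}\in\V_{\D,0}$ and $\psid_\D^{n+1}\in\V_{\D,0}$ are found, the densities $\Nd_\D^{n+1},\Pd_\D^{n+1}$ are automatically $I_h$-valued. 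Hence I would treat $(\w_\D^{N,n+1},\w_\D^{P,n+1},\psid_\D^{n+1})$ as the genuine unknowns in the finite-dimensional space $\V_{\D,0}^3$, the densities and $\phid_\D^{n+1}=\psid_\D^{n+1}+\phid_\D^D$ being defined through \eqref{def:densities}.

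First I would derive the entropy--dissipation relation \eqref{eq:diss}, which is both the structural heart of the scheme and the source of the a priori bounds. Testing \eqref{sch:DD:N} with $\v_\D=\w_\D^{N,n+1}$ and \eqref{sch:DD:P} with $\v_\D=\w_\D^{P,n+1}$ produces the dissipation terms $T_\D^N(\Nd_\D^{n+1},\w_\D^{N,n+1},\w_\D^{N,n+1})\ge 0$ and its $P$-analogue (nonnegativity from \eqref{def:TK}, positivity of $r_K$, and the coercivity \eqref{def:localcoercivity}), while the two right-hand sides combine, using $\alpha_N+\alpha_P=0$, into the recombination contribution $\int_\Omega R(h(N)+h(P))\ge 0$ (sign of $s\mapsto(\e^s-1)s$). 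For the accumulation terms, since $\w_\D^{N,n+1}=[h(\Nd_\D^{n+1})-h(\Nd_\D^e)]-[\phid_\D^{n+1}-\phid_\D^e]$ (the equilibrium quasi-Fermi potentials vanish by \eqref{def:discreteequ}), convexity of $H$ turns the $h$-parts into the differences of the relative entropies of $N$ and $P$; the remaining $\phi$-parts combine, via the difference between \eqref{sch:DD:phi} at steps $n+1$ and $n$ tested with $\phid_\D^{n+1}-\phid_\D^e\in\V_{\D,0}$ and the elementary inequality $a_\D^\phi(X-Y,X)\ge\tfrac12[a_\D^\phi(X,X)-a_\D^\phi(Y,Y)]$ for the symmetric form, into the difference of the electrostatic energies. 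Summing yields \eqref{eq:diss}.

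For existence at one step I would run a topological degree (Leray--Schauder/Brouwer) argument on $\V_{\D,0}^3$: I would set up a homotopy joining the one-step map of \eqref{sch:DD} to a decoupled linear problem whose solvability is immediate, and verify the degree is well defined by bounding all solutions along the homotopy, uniformly in the homotopy parameter. These bounds come from the (homotopy-consistent) entropy--dissipation estimate: the entropy controls $|\phid_\D^{n+1}-\phid_\D^e|_{1,\D}$ through \eqref{def:globalcoercivity}, and, after telescoping \eqref{eq:diss}, the dissipation at the current step is bounded by $\N^0/\Delta t$, which controls the weighted seminorms of $\w_\D^{N,n+1},\w_\D^{P,n+1}$. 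The technical result of Appendix~\ref{ap:lemma}, together with the well-posedness of the semilinear Poisson scheme \eqref{sch:NLPoisson} (Theorem~\ref{th:discreteNLPoisson}), is what makes these estimates robust and the degree argument go through.

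Finally, the uniform bounds \eqref{eq:unifpos} follow from the same two estimates specialised to the fixed $\D$ and $\Delta t$: the bound $\N^{n+1}\le\N^0$ controls $|\phid_\D^{n+1}-\phid_\D^e|_{1,\D}$ uniformly in $n$, and since $\V_{\D,0}$ is finite-dimensional, norm equivalence upgrades this to a nodal $L^\infty$ bound on $\phid_\D^{n+1}$; the bound $\Diss^{n+1}\le\N^0/\Delta t$ controls $|\w_\D^{N,n+1}|_{1,\D}$ and $|\w_\D^{P,n+1}|_{1,\D}$ once $r_K(\Nd_K^{n+1})$ is bounded below, and feeding these into \eqref{def:densities} confines the arguments of $g$ to a compact subinterval, i.e. the densities to some $[M_\flat,M_\sharp]\subset I_h$. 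I expect the main obstacle to be precisely this last confinement: the lower bound on $r_K$ and the control of $\w$ are entangled, since a vanishing density degenerates the weight $r_K$ in the dissipation, so the argument must exploit the defining property \eqref{hyp:m} of the mean $m$ (through $m_h$ and the identity \eqref{eq:mhg} relating it to $G$) rather than close circularly. The dependence of $M_\flat,M_\sharp$ on $\D$ and $\Delta t$ is unavoidable, entering through the finite-dimensional norm equivalence and the $\N^0/\Delta t$ factor.
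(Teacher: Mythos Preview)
Your outline of the entropy--dissipation relation is essentially the paper's Lemma~\ref{lem:test} and Proposition~\ref{prop:entrodiss}, and your derivation of the uniform bounds \eqref{eq:unifpos} from $\N^{n+1}\le\N^0$ and $\Diss^{n+1}\le\N^0/\Delta t$ via Lemma~\ref{lem:bounds} matches the paper as well.

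The existence argument, however, differs in a way that creates a real difficulty. You propose to work in $\V_{\D,0}^3$, treating $\psid_\D^{n+1}$ as an independent unknown alongside the quasi-Fermi potentials, and to run a homotopy/degree argument. The paper instead reduces to $\V_{\D,0}^2$: given $(\w_\D^N,\w_\D^P)$, the potential $\psid_\D$ is \emph{defined} as the unique solution of the semilinear Poisson scheme~\eqref{def:assocpot:w} (this is the ``correspondence'' of Section~\ref{sec:assoc}, justified by Theorem~\ref{th:discreteNLPoisson}), and one then applies the elementary Brouwer corollary (Lemma~\ref{lem:exist}) to the continuous vector field $\G:\V_{\D,0}^2\to\V_{\D,0}^2$, checking $\langle\G(\ww_\D),\ww_\D\rangle>0$ on a large sphere directly from the entropy--dissipation inequality and Lemma~\ref{lem:bounds}.

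The reason this reduction matters is not cosmetic. The entropy--dissipation estimate you intend to use for a priori bounds along the homotopy hinges on $\phid_\D$ being the electrostatic potential \emph{associated} to the densities through the Poisson relation~\eqref{def:assocpot}: this is precisely what makes the cross-terms collapse in Lemma~\ref{lem:test}. If $\psid_\D$ is an independent variable in $\V_{\D,0}^3$, a generic point along your homotopy will not satisfy the Poisson equation, and the entropy identity no longer yields the bound you need. You would then have to either constrain the homotopy so that the Poisson relation holds throughout (which amounts to eliminating $\psid_\D$ and working in $\V_{\D,0}^2$ anyway), or produce a separate a priori bound on $\psid_\D$ independent of the entropy structure. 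The paper's choice sidesteps this entirely and replaces the degree machinery by the one-line sign check $\langle\G(\ww_\D),\ww_\D\rangle\ge\big(\N(\ww_\D)-\N^n\big)/\Delta t+\Diss(\ww_\D)$.
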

\begin{rem}[$L^\infty$ bounds]\label{rem:boundnonuniform}
Notice that the bounds $M_\flat$ and $M_\sharp$ on the discrete densities depend on the time step $\Delta t$ and on the spatial discretisation $\D$ (and therefore, on the meshsize). Thus, our result is weaker than the one proved in~\cite[Theorem 1]{BCCH:19} in the framework of TPFA schemes. In fact, we believe that the discrete Moser iteration process used in~\cite{BCCH:19} to get uniform bounds cannot be adapted in the hybrid framework, because the method is intrinsically non-monotone. 
\end{rem}
\section{Analysis of the stationary scheme} \label{sec:sta}
In this section, we analyse the scheme~\eqref{sch:NLPoisson} for the approximation of the semi-linear elliptic equation~\eqref{pb:NLPoisson}. First, we show that the scheme is well-posed. Then, we use this result to show that there is a correspondence between the discrete densities and the discrete quasi-Fermi potentials, which will be very useful for the following. 
\subsection{Well-posedness } \label{sec:sta:wellposed}
In the next theorem, we show that the scheme~\eqref{sch:NLPoisson} admits a unique solution. The proof relies on an energy minimisation approach.
\begin{theorem}[Well-posedness for \eqref{sch:NLPoisson}] \label{th:discreteNLPoisson}
Let $z^N$ and $z^P$ be two functions in $L^\infty(\Omega )$.
There exists a unique $\psid_\D \in \V_{\D,0}$ solution to \eqref{sch:NLPoisson}, and there exists a positive constant $C_\text{Poisson}$, only depending on $\Lambda_\phi$, $\Omega$, $\Gamma^D$, $d$ and $\theta_\D$ such that the following stability estimate holds:
\begin{equation} \label{eq:bound:NLPoisson}
	|\psid_\D|_{1,\D}^2 \leq C_\text{Poisson} \left [
		G \left (\|z^P\|_{L^\infty(\Omega)} + \|z^N\|_{L^\infty(\Omega)}   + \|\phi^D\|_{L^\infty(\Omega)} \right )
		+\|C\|_{L^2(\Omega)}^2 + \| \phi^D \|_{H^{\nicefrac12}(\Gamma^D)}^2		
		 \right ].
\end{equation}
Moreover, the application $(z^N, z^P) \mapsto \psid_\D$ is continuous from $L^\infty(\Omega) \times L^\infty(\Omega)$ to $\V_{\D,0}$.
\end{theorem}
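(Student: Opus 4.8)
The plan is to realise \eqref{sch:NLPoisson} as the Euler--Lagrange equation of a strictly convex, coercive energy functional over the finite-dimensional space $\V_{\D,0}$, and then to exploit this minimisation structure for both well-posedness and the stability estimate. Concretely, I would introduce, for $\v_\D \in \V_{\D,0}$,
\[
J(\v_\D) = \frac{1}{2} a_\D^\phi(\v_\D, \v_\D) - L_\D^\phi(\v_\D) + \int_\Omega \Big[ G\big(z^P_\M - \phi_\M^D - v_\M\big) + G\big(z^N_\M + \phi_\M^D + v_\M\big) \Big].
\]
Since $a_\D^\phi$ is symmetric and $G' = g$, computing the directional derivative of $J$ at $\psid_\D$ in a direction $\v_\D \in \V_{\D,0}$ gives exactly $a_\D^\phi(\psid_\D,\v_\D) - L_\D^\phi(\v_\D) - \int_\Omega [g(z^P_\M - \phi_\M^D - \psi_\M) - g(z^N_\M + \phi_\M^D + \psi_\M)] v_\M$, so the stationarity condition is precisely \eqref{sch:NLPoisson}. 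This identification is the only genuinely scheme-specific step; the remainder is convex analysis.

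For existence and uniqueness, I would check that $J$ is $\C^1$ and strictly convex: the quadratic part $\frac12 a_\D^\phi(\cdot,\cdot)$ is strictly convex on $\V_{\D,0}$ by \eqref{def:globalcoercivity} (using that $|\cdot|_{1,\D}$ is a norm there), $-L_\D^\phi$ is affine, and each $G$-term is the composition of the convex function $G$ with an affine map, hence convex. Coercivity follows by discarding the nonnegative $G$-terms ($G\geq 0$) and combining \eqref{def:globalcoercivity} with the continuity bound \eqref{def:contL} to get $J(\v_\D) \geq \frac12 \lambda_\flat^\phi \alpha_\flat |\v_\D|_{1,\D}^2 - L_\sharp^\phi |\v_\D|_{1,\D} \to +\infty$. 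A continuous, strictly convex, coercive functional on a finite-dimensional space attains a unique minimiser, which by convexity and $\C^1$ regularity is its unique critical point; this yields the existence and uniqueness of the solution $\psid_\D$ to \eqref{sch:NLPoisson}.

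The stability estimate \eqref{eq:bound:NLPoisson} would then come from comparing the minimiser with $\zero_\D \in \V_{\D,0}$. Minimality gives $J(\psid_\D) \leq J(\zero_\D)$; on the right, $a_\D^\phi(\zero_\D,\zero_\D) = 0$, $L_\D^\phi(\zero_\D) = 0$, and monotonicity of $G$ (since $G' = g > 0$) bounds $J(\zero_\D) \leq 2|\Omega|\, G(\|z^P\|_{L^\infty(\Omega)} + \|z^N\|_{L^\infty(\Omega)} + \|\phi^D\|_{L^\infty(\Omega)})$. On the left, dropping the $G$-terms and applying Young's inequality to the linear term gives $J(\psid_\D) \geq \frac14 \lambda_\flat^\phi \alpha_\flat |\psid_\D|_{1,\D}^2 - (L_\sharp^\phi)^2/(\lambda_\flat^\phi \alpha_\flat)$. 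Combining the two bounds and using $L_\sharp^\phi \leq C_L(\|C\|_{L^2(\Omega)} + \|\phi^D\|_{H^{\nicefrac12}(\Gamma^D)})$ from \eqref{def:contL} produces \eqref{eq:bound:NLPoisson} after absorbing the mesh- and tensor-dependent factors into $C_\text{Poisson}$.

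Finally, continuity of $(z^N,z^P) \mapsto \psid_\D$ would follow from a standard compactness argument: along a converging sequence $(z^{N,k},z^{P,k}) \to (z^N,z^P)$ in $L^\infty(\Omega)^2$, the uniform bound \eqref{eq:bound:NLPoisson} (with $G$ continuous) keeps the solutions $\psid_\D^k$ bounded in the finite-dimensional $\V_{\D,0}$; any subsequential limit solves \eqref{sch:NLPoisson} for $(z^N,z^P)$ by continuity of $g$ and of cell-averaging, hence equals $\psid_\D$ by uniqueness, and the subsequence principle gives convergence of the whole sequence. I do not expect a serious obstacle here: the real crux is to identify the correct energy $J$ and verify that its first-order condition reproduces \eqref{sch:NLPoisson}, after which everything reduces to convex optimisation. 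The only point demanding care is the non-standard form of \eqref{eq:bound:NLPoisson}: the nonlinearity must be kept as the term $G(\cdots)$, which forces one to bound $J(\zero_\D)$ through the monotonicity of $G$ rather than aim for a polynomial estimate, and to use $G \geq 0$ to discard the nonlinear contribution when establishing coercivity.
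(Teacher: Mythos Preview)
Your proposal is correct and follows essentially the same energy-minimisation approach as the paper: the same functional $J$, the same identification of its critical point with \eqref{sch:NLPoisson}, and the same comparison $J(\psid_\D)\leq J(\zero_\D)$ for the stability bound. The only minor deviation is that you argue continuity via compactness and uniqueness, whereas the paper invokes the implicit function theorem (using that $J$ is $\C^2$); both are valid and routine.
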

\begin{proof}
 We define the discrete energy functional $J_\D : \V_{\D,0} \to \R$ by 
\begin{equation}
	J_\D(\u_\D) = \frac{1}{2} a_\D^\phi(\u_\D,\u_\D) - L_\D^\phi(\u_\D) + \int_\Omega G(z^P_\M - \phi_\M^D - u_\M ) + G(z^N_\M + \phi_\M^D  +u_\M ).
\end{equation}
Since $G$ is $\C^2$ on $\R$, $J_\D$ is $\C^2$ on $\V_{\D,0}$, and one can compute its differential at a point $\u_\D\in \V_{\D,0}$: 
\[
	\forall \v_\D\in\V_{\D,0}, \ d {J_\D} (\u_\D) \cdot \v_\D  = 
		a_\D^\phi(\u_\D,\v_D)- L_\D^\phi(\v_\D) - \int_\Omega  \left [g(z^P_\M - \phi_\M^D - u_\M ) - g(z^N_\M + \phi_\M^D  +u_\M ) \right ]v_\M.
\]
Hence, a solution $\psid_\D \in \V_{\D,0}$ to \eqref{sch:NLPoisson} is an element of $\V_{\D,0}$ such that $d {J_\D} (\psid_\D) = 0$.
Now, one can notice that $J_\D$ is strongly convex on $\V_{\D,0}$. Indeed, since $a^\phi_\D$ is coercive and $L^\phi_\D$ is a continuous linear form, $\u_\D \mapsto \frac{1}{2} a_\D^\phi(\u_\D,\u_\D) - L_\D^\phi(\u_\D)$ is strongly convex. Moreover, $G$ is a convex function (since $G'=g$ is strictly increasing), so $\u_\D \mapsto \int_\Omega G(z^P_\M - \phi_\M^D - u_\M ) + G(z^N_\M + \phi_\M^D  +u_\M )$ is also convex.
Therefore, $J_\D$ has a unique minimum $\psid_\D \in \V_{\D,0}$, which is a solution to \eqref{sch:NLPoisson}. 
On the other hand, by convexity of $J_\D$, any critical point of $J_\D$ in $\V_{\D,0}$ is a minimum, therefore the solution to~\eqref{sch:NLPoisson} is unique. The continuity of $(z^N_\M, z^P_\M) \mapsto \psid_\D$ follows from the regularity of $J_\D$ and the implicit function theorem.
To obtain \eqref{eq:bound:NLPoisson}, one can use the monotonicity of $G$ and \eqref{eq:boundlifting} to get
\begin{align*}
	J_\D (\psid_\D ) \leq J_\D( \underline{0}_\D ) &= \int_\Omega G(z^P_\M - \phi_\M^D) + G(z^N_\M + \phi_\M^D) \\
		&\leq  \int_\Omega G \left (\|z^P\|_{L^\infty(\Omega)} + \|\phi^D\|_{L^\infty(\Omega)} \right ) 
		+  G \left (\|z^N\|_{L^\infty(\Omega)} + \|\phi^D\|_{L^\infty(\Omega)} \right) \\
		& \leq 2  |\Omega |  G \left ( \|z^N\|_{L^\infty(\Omega)}+ \|z^P\|_{L^\infty(\Omega)} +\|\phi^D\|_{L^\infty(\Omega)}\right ). 
\end{align*}
Furthermore, using the fact that $G$ is positive alongside with \eqref{def:globalcoercivity} and \eqref{def:contL},  and using Young's inequality on $ L_\sharp^\phi |\psid_\D|_{1,\D}$, one has the following lower bound:
\[
	J_\D(\psid_\D) \geq  \frac{1}{2} a_\D^\phi(\psid_\D,\psid_\D ) - L_\D^\phi(\psid_\D) 
		\geq	 \frac{\alpha_\flat \lambda_\flat}{2} |\psid_\D|_{1,\D}^2 - L_\sharp^\phi |\psid_\D|_{1,\D} 
		\geq  \frac{\alpha_\flat \lambda_\flat}{4} |\psid_\D|_{1,\D}^2  - \frac{1}{\alpha_\flat \lambda_\flat}{L_\sharp^\phi}^2.
\] 
We conclude by combining the previous estimates and using the bound on $L_\sharp^\phi$.
\end{proof}
Notice that this result ensures the existence and uniqueness of the discrete thermal equilibrium $(\Nd_\D^e, \Pd_\D^e, \phid_\D^e)$ described in equation~\eqref{def:discreteequ}.
\begin{rem}[Uniform bounds] The stability result~\eqref{eq:bound:NLPoisson} gives a uniform bound (with respect to the meshsize $h_\D$) on $\psid_\D$ in energy norm $| \cdot |_{1,\D}$. There is however no clear $L^\infty$ bound (uniform in $\D$) on the potential. It is rather different from some previous results in the TPFA framework (see~\cite{CHFi:07}), where $L^\infty$ bounds were obtained alongside with the $H^1$ estimate.
\end{rem}
\subsection{Correspondence between discrete densities and discrete potentials} \label{sec:assoc}
In this section, we discuss the link between discrete densities and discrete potentials.

For a given couple $(\Nd_\D,\Pd_\D) \in \V_\D^2$ of $I_h$-valued vectors, one associates a unique discrete electrostatic potential $\phid_\D = \psid_\D +  \phid_\D^D \in \V_\D$ such that $\psid_\D \in \V_{\D,0}$ is the solution to
\begin{equation} \label{def:assocpot}
	\forall\v_{\D} \in\V_{\D,0}, \, a_\D^\phi(\psid_\D, \v_\D) = L_\D^\phi(\v_\D) + \int_\Omega \left (P_\M - N_\M \right ) v_\M.
\end{equation}
Since $a_\D^\phi$ is coercive on $\V_{\D,0}$ and $L_\D^\phi$ is a linear form, \eqref{def:assocpot} is well-posed and $\psid_\D$ is uniquely defined, so is $\phid_\D$. Then, one associates to $(\Nd_\D,\Pd_\D)$ a unique couple of discrete quasi-Fermi potentials $(\w_\D^N, \w_\D^P)\in \V_\D^2 $ defined by 
\begin{equation} \label{def:w}
	\w_\D^N= h \left( \Nd_\D\right) - \phid_\D- \alpha_N \one_\D
	\text{ and }
	\w_\D^P= h \left( \Pd_\D\right) + \phid_\D  - \alpha_P \one_\D.
\end{equation}
We call these vectors the discrete electrostatic ($\phid_\D$) and  quasi-Fermi potentials ($(\w_\D^N, \w_\D^P)$) associated to the discrete densities $(\Nd_\D,\Pd_\D)$. 
\newline
Note that, as in Remark~\ref{rem:boundvalues}, the discrete quasi-Fermi potentials associated to $(\Nd_\D,\Pd_\D)$ are in $\V_{\D,0}$ if and only if the densities satisfy the following discrete boundary condition: 
\begin{equation} \label{cond:boundarydensities}
	\forall \s \in \E_{ext}^D, \ N_\s = N_\s^D \text{ and } P_\s = P_\s^D.
\end{equation}

Conversely, for a given couple $(\w_\D^N, \w_\D^P) \in \V_{\D}^2$, one associates a unique electrostatic potential $\phid_\D = \psid_\D + \phid_\D^D \in \V_\D$ such that $\psid_\D \in \V_{\D,0}$ is the solution to
\begin{multline} \label{def:assocpot:w}
	\forall\v_{\D} \in\V_{\D,0}, \, a_\D^\phi(\psid_\D, \v_\D) = L_\D^\phi(\v_\D) +  \\
	 \int_\Omega \left (g (w^P_\M + \alpha_P  - \phi^D_\M - \psi_\M ) - g (w^N_\M + \alpha_N  + \phi^D_\M + \psi_\M ) \right ) v_\M.
\end{multline}
Indeed, since the functions $ w^P_\M + \alpha_P$ and $w^N_\M + \alpha_N$ are in $L^\infty(\Omega)$, according to Theorem~\ref{th:discreteNLPoisson} applied to $(z^N,z^P )= (w^P_\M + \alpha_P,w^N_\M + \alpha_N)$, there exists a unique solution to problem~\eqref{def:assocpot:w}. Then, one associates to $(\w_\D^N, \w_\D^P)$ a couple of $I_h$-valued discrete carrier densities $(\Nd_\D,\Pd_\D) \in \V_{\D}$ by
\[
	\Nd_D = g( \w_\D^N+ \phid_\D + \alpha_N \one_\D) \text{ and } \Pd_D = g( \w_\D^P - \phid_\D + \alpha_P \one_\D).
\]
Notice that with this process, according to~\eqref{def:assocpot:w} and the definition of $\Nd_\D$ and $\Pd_\D$, one has that
\[
	\forall\v_{\D} \in\V_{\D,0}, \, a_\D^\phi(\psid_\D, \v_\D ) = L_\D^\phi(\v_\D) + \\
	 \int_\Omega \left (P_\M -N_\M  \right ) v_\M .
\]
Therefore, it means that $(\w_\D^N, \w_\D^P)$ and $\phid_\D$ are the discrete potentials associated to the discrete densities $(\Nd_\D,\Pd_\D)$ in the sense of~\eqref{def:assocpot}-\eqref{def:w}.
\newline
Moreover, $(\w_\D^N, \w_\D^P) \in \V_{\D,0}^2$ if and only if $(\Nd_\D,\Pd_\D)$ satisfy the discrete boundary condition~\eqref{cond:boundarydensities}.  

Thus, there is a bijective correspondence between ($I_h$-valued) discrete densities and discrete quasi-Fermi potentials (in $\V_{\D})$.
\begin{rem}[Thermal equilibrium and quasi-Fermi potentials] \label{rem:weq}
From the definition~\eqref{def:discreteequ} of the discrete thermal equilibrium, one can notice that $(\Nd_\D^e, \Pd_\D^e, \phid_\D^e)$ are the discrete densities and electrostatic potential associated to the quasi-Fermi potentials at thermal equilibrium, which are the null vectors $\left (\w_\D^{N,e}, \w_\D^{P,e} \right )  = (\zero_\D, \zero_\D)$.
\end{rem}
\section{Analysis of the transient scheme} \label{sec:DD}
In this section, we perform the analysis of the scheme~\eqref{sch:DD} for the drift-diffusion system. 
The first two subsections are dedicated to the proof of Theorem~\ref{th:DD}.
We also state and prove a qualitative property about the discrete solutions, namely the long-time behaviour in Theorem~\ref{th:longtime}, in line with the work of~\cite{BCCH:17}.
\subsection{Discrete entropy structure} \label{sec:structure}
The scheme~\eqref{sch:DD} is designed so as to mimic the continuous entropic structure presented in introduction.
In this part, we are interested in the entropic property of the scheme~\eqref{sch:DD}.

First, we define a discrete counterpart of the entropy and entropy dissipation.
Given $(\Nd_\D,\Pd_\D)$ a couple of $I_h$-valued vectors, one can define the relative discrete entropy as  
\begin{multline} \label{def:entro}
	\N(\Nd_\D,\Pd_\D) = \int_\Omega H(N_\M) - H(N^e_\M) - h(N_\M^e) \left (N_\M - N_\M^e \right )    \\
		+ \int_\Omega H(P_\M) - H(P^e_\M) - h(P_\M^e) \left (P_\M - P_\M^e \right )   
	 	+ \frac{1}{2}a^\phi_\D(\phid_\D - \phid_\D^e,\phid_\D - \phid_\D^e),
\end{multline} 
where $(\Nd^e_\D, \Pd^e_\D, \phid_\D^e)$ are defined in~\eqref{def:discreteequ}, and $\phid_\D$ is the discrete electrostatic potential associated to $(\Nd_\D,\Pd_\D)$ in the sense of Section~\ref{sec:assoc}.
Notice that, by convexity of $H$, $H(N_\M) - H(N^e_\M) - h(N_\M^e) \left (N_\M - N_\M^e \right )$ and the analogous term in $P_\M$ are non-negative. Therefore, the discrete entropy is non-negative.
Similarly, one defines the discrete dissipation by 
\begin{multline} \label{def:diss}
	\Diss(\Nd_\D,\Pd_\D) =  T_\D^N (\Nd_\D, \w_\D^N, \w_\D^N) + T_\D^P (\Pd_\D, \w_\D^P, \w_\D^P)
		+ \int_\Omega  R(N_\M,P_\M)\left ( h(N_\M) + h(P_\M) \right ) .
\end{multline}
By definition of $T_\D^N$ and $T_\D^P$, the first two terms are non-negative. One can also notice that the integrand of the third term writes 
\[
	R(N_\M,P_\M)\left ( h(N_\M) + h(P_\M) \right ) = r(x,N_\M, P_\M) 
		\left  ( \exp  ( h(N_\M) + h(P_\M)  ) - 1 \right )  \left ( h(N_\M) + h(P_\M) \right ), 
\]
so since $r$ is non-negative and $x(\e^x-1)\geq 0$ for any real $x$, the discrete dissipation is also non-negative.
Note that, given a couple $(\w_\D^N, \w_\D^P) \in \V_{\D}^2$ of discrete quasi-Fermi potentials, one can also define the corresponding relative entropy and dissipation by $\N(\w_\D^N, \w_\D^P)= \N(\Nd_\D,\Pd_\D)$ and $\Diss(\w_\D^N, \w_\D^P)= \Diss(\Nd_\D,\Pd_\D)$, where $(\Nd_\D,\Pd_\D)$ are the discrete densities associated to $(\w_\D^N, \w_\D^P)$.

At the continuous level, the analysis of the drift-diffusion system relies on the entropic structure. At the discrete level, we  use analogous results, based on the following lemma, which will be used in the proofs of Theorems~\ref{th:DD} and~\ref{th:longtime}. 
\begin{lemma}[Discrete differentiation of the entropy] \label{lem:test}
Let $\Nd_\D^n$, $\Pd_\D^n$, $\Nd_\D$ and $\Pd_\D$ be four $I_h$-valued elements of $\V_\D$, and $(\w_\D^N, \w_\D^P)\in \V_\D^2 $ be the discrete quasi-Fermi potentials associated to the discrete densities $(\Nd_\D , \Pd_\D)$.
Then, the following inequality holds: 
\begin{equation} \label{eq:test}
	\N(\Nd_\D,\Pd_\D) -\N(\Nd_\D^n,\Pd_\D^n) \leq 
		\int_\Omega \left (N_\M - N_\M ^n \right ) w_\M^N  +
		\int_\Omega \left ( P_\M - P_\M ^n \right )  w_\M^P .
\end{equation}
\end{lemma}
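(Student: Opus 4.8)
The plan is to split the entropy difference into three contributions and treat each separately. Writing
\[
\N(\Nd_\D,\Pd_\D) - \N(\Nd_\D^n,\Pd_\D^n) = A_N + A_P + A_\phi,
\]
where \(A_N = \int_\Omega \big(H(N_\M) - H(N_\M^n) - h(N_\M^e)(N_\M - N_\M^n)\big)\), \(A_P\) is its analogue in \(P\), and \(A_\phi = \tfrac12 a_\D^\phi(\phid_\D - \phid_\D^e, \phid_\D - \phid_\D^e) - \tfrac12 a_\D^\phi(\phid_\D^n - \phid_\D^e, \phid_\D^n - \phid_\D^e)\), with \(\phid_\D\) (resp. \(\phid_\D^n\)) the electrostatic potential associated to \((\Nd_\D,\Pd_\D)\) (resp. \((\Nd_\D^n,\Pd_\D^n)\)) in the sense of~\eqref{def:assocpot}. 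For \(A_N\) and \(A_P\) I would invoke the convexity of \(H\) (recall \(H' = h\)), which yields pointwise \(H(N_\M) - H(N_\M^n) \leq h(N_\M)(N_\M - N_\M^n)\), and hence \(A_N \leq \int_\Omega \big(h(N_\M) - h(N_\M^e)\big)(N_\M - N_\M^n)\), together with the analogous bound for \(A_P\).

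Next I would re-express the factors \(h(N_\M) - h(N_\M^e)\) and \(h(P_\M) - h(P_\M^e)\) through the quasi-Fermi potentials. By the definition~\eqref{def:w} of \((\w_\D^N, \w_\D^P)\) and the fact that the equilibrium quasi-Fermi potentials vanish (Remark~\ref{rem:weq}), one has \(h(N_\M) - h(N_\M^e) = w_\M^N + (\phi_\M - \phi_\M^e)\) and \(h(P_\M) - h(P_\M^e) = w_\M^P - (\phi_\M - \phi_\M^e)\). Substituting and summing, the terms carrying \(w_\M^N\) and \(w_\M^P\) reproduce exactly the right-hand side of~\eqref{eq:test}, and there remains the cross-term \(\int_\Omega (\phi_\M - \phi_\M^e)\big((N_\M - N_\M^n) - (P_\M - P_\M^n)\big)\), which must be absorbed into \(A_\phi\).

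To control this cross-term, I would use that \(\phid_\D - \phid_\D^e\) and \(\phid_\D^n - \phid_\D^e\) belong to \(\V_{\D,0}\) (the three potentials share the Dirichlet lifting \(\phid_\D^D\)) and subtract from the defining relation~\eqref{def:assocpot} for \(\phid_\D\) (resp. \(\phid_\D^n\)) the corresponding relation for the equilibrium potential \(\phid_\D^e\), which is associated to \((\Nd_\D^e, \Pd_\D^e)\) by Remark~\ref{rem:weq}. This gives \(a_\D^\phi(\phid_\D - \phid_\D^e, \v_\D) = \int_\Omega \big((P_\M - N_\M) - (P_\M^e - N_\M^e)\big) v_\M\) for all \(\v_\D \in \V_{\D,0}\), and likewise with the superscript \(n\). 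Testing both identities with \(\v_\D = \phid_\D - \phid_\D^e\), the cross-term rewrites as \(-a_\D^\phi(\phid_\D - \phid_\D^e, \phid_\D - \phid_\D^e) + a_\D^\phi(\phid_\D^n - \phid_\D^e, \phid_\D - \phid_\D^e)\). Adding \(A_\phi\) and exploiting the symmetry of \(a_\D^\phi\) (inherited from that of \(\Lambda_\phi\)), the entire electrostatic contribution collapses to \(-\tfrac12 a_\D^\phi(\phid_\D - \phid_\D^n, \phid_\D - \phid_\D^n)\), which is nonpositive by~\eqref{def:globalcoercivity}. This perfect-square cancellation is the crux of the argument: the main obstacle is the bookkeeping of signs so that the residual electrostatic term is \emph{exactly} a negative square, which is precisely what the symmetry of \(\Lambda_\phi\) and the shared Dirichlet lifting guarantee. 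Dropping this nonpositive term yields~\eqref{eq:test}.
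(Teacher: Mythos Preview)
Your proof is correct and follows essentially the same route as the paper: split the entropy difference into the $N$, $P$, and electrostatic pieces, use convexity of $H$ on the first two, and handle the third via the discrete Poisson relations~\eqref{def:assocpot}. The only organisational difference is that the paper bounds $A_\phi$ first by the convexity inequality $a_\D^\phi(\u_\D,\u_\D)-a_\D^\phi(\v_\D,\v_\D)\le 2a_\D^\phi(\u_\D-\v_\D,\u_\D)$ and then invokes the Poisson equations, whereas you extract the quasi-Fermi potentials first and then verify that $A_\phi$ plus the resulting cross-term equals exactly $-\tfrac12\,a_\D^\phi(\phid_\D-\phid_\D^n,\phid_\D-\phid_\D^n)$; these are the same identity rearranged, so the two arguments are equivalent.
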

\begin{proof}
By definition of the discrete entropy, one has 
\begin{multline} \label{eq:difentro}
	\N(\Nd_\D,\Pd_\D) -\N(\Nd_\D^n,\Pd_\D^n) = 
		\int_\Omega H(N_\M) - H(N^n_\M) - h(N_\M^e) \left (N_\M - N_\M^n \right ) \\
		+\int_\Omega H(P_\M) - H(P^n_\M) - h(P_\M^e) \left (P_\M - P_\M^n \right ) \\
		+\frac{1}{2} \left ( a^\phi_\D(\phid_\D - \phid_\D^e,\phid_\D - \phid_\D^e) -
			a^\phi_\D(\phid_\D^n - \phid_\D^e,\phid_\D^n - \phid_\D^e) \right ).
\end{multline}
Let us first consider the first term (in $N$) of \eqref{eq:difentro}: by convexity of $H$ and definition of $\Nd_\D^e$, we get 
\begin{multline*}
	\int_\Omega H(N_\M) - H(N^n_\M) - h(N_\M^e) \left (N_\M - N_\M^n \right ) \leq \int_\Omega (h(N_\M) - h(N_\M^e) )\left (N_\M - N_\M^n \right ) \\
	= \int_\Omega (h(N_\M) - \phi^e_\M - \alpha_N )\left (N_\M - N_\M^n \right ).
\end{multline*}
For the second term of \eqref{eq:difentro}, one has an analogous result, namely 	
\[
	\int_\Omega H(P_\M) - H(P^n_\M) - h(P_\M^e) \left (P_\M - P_\M^n \right ) 
		\leq \int_\Omega (h(P_\M) + \phi^e_\M - \alpha_P )\left (P_\M - P_\M^n \right ).
\]
Now, notice that since $a^\phi_\D$ is a symmetric and positive semi-definite bilinear form on $\V_{\D}$, the quadratic form $\u_\D \mapsto a^\phi_\D(\u_\D,\u_\D)$ is convex, and therefore for any $(\u_\D, \v_\D) \in \V_\D^2$, one has
\[
	a^\phi_\D(\u_\D, \u_\D) - a^\phi_\D(\v_\D, \v_\D ) \leq  2 a^\phi_\D (\u_\D - \v_\D, \u_\D ).
\]
Using this relation with $\u_\D = \phid_\D - \phid_\D^e$ and $\v_\D = \phid_\D^n - \phid_\D^e \in \V_{\D}$, we obtain the following estimate on the third term of \eqref{eq:difentro}:
\[
	\frac{1}{2} \left ( a^\phi_\D(\phid_\D - \phid_\D^e,\phid_\D - \phid_\D^e) -
			a^\phi_\D(\phid_\D^n - \phid_\D^e,\phid_\D^n - \phid_\D^e) \right )
			\leq a^\phi_\D(\phid_\D -\phid_\D^n, \phid_\D -\phid_\D^e).
\]
We recall that the electrostatic potentials are defined by \eqref{def:assocpot}, and that $\phid_\D -\phid_\D^n = \psid_\D - \psid_\D^n$. Hence, letting $\v_\D = \phid_\D -\phid_\D^e \in \V_{\D,0}$ and using the linearity of $a^\phi_\D$ with respect to the first variable, one gets 
\begin{align*}
	a^\phi_\D(\phid_\D -\phid_\D^n, \v_\D) &=  a^\phi_\D(\psid_\D, \v_\D)  - a^\phi_\D(\psid_\D^n, \v_\D)  \\
		&= L^\phi_\D(\v_\D) + \int_\Omega (P_\M - N_\M)v_\M - L^\phi_\D(\v_\D) - \int_\Omega (P^n_\M - N^n_\M)v_\M  \\
		&= \int_\Omega (P_\M- P^n_\M ) v_\M -  \int_\Omega (N_\M-N^n_\M)  v_\M.
\end{align*}
Combining the previous estimates, we obtain 
\begin{multline*}
	\N(\Nd_\D,\Pd_\D) -\N(\Nd_\D^n,\Pd_\D^n) \leq 
		\int_\Omega (h(N_\M) - \phi^e_\M - \alpha_N  - v_\M)\left (N_\M - N_\M^n \right )  \\
			+ \int_\Omega (h(P_\M) + \phi^e_\M - \alpha_P + v_\M)\left (P_\M - P_\M^n \right ).
\end{multline*}
But, by definition of $\v_\D$, we have  
$\displaystyle 
	h(N_\M) - \phi^e_\M - \alpha_N  - v_\M = h(N_\M) - \phi_\M - \alpha_N  = w ^N_\M
$ and $h(P_\M) + \phi^e_\M - \alpha_P + v_\M = w ^P_\M$, therefore \eqref{eq:test} holds.
\end{proof}
One can now state the following a priori result about the dissipation of entropy, which ensures that~\eqref{eq:diss} holds.
\begin{prop}[Entropy dissipation] \label{prop:entrodiss}
Assume that $\left ( (\Nd_\D^n, \Pd_\D^n, \phid_\D^n) \right ) _{n \in \mathbb{N}} \in {\V_\D^3 }^\mathbb{N}$ is a solution to 
\eqref{sch:DD}. Then, the following entropy-entropy dissipation relation holds: 
\[
	\forall n \geq 0, \quad \frac{\N(\Nd_\D^{n+1},\Pd_\D^{n+1}) -\N(\Nd_\D^n,\Pd_\D^n)}{\Delta t} 
		\leq -\Diss(\Nd_\D^{n+1},\Pd_\D^{n+1}).
\]
\end{prop}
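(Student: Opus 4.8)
The plan is to combine the abstract entropy-difference estimate of Lemma~\ref{lem:test} with the scheme equations \eqref{sch:DD:N}--\eqref{sch:DD:P}, using the discrete quasi-Fermi potentials themselves as test functions. I would fix $n\geq 0$ and first note that all densities in play are $I_h$-valued (for the indices $\geq 1$ by \eqref{def:densities}, and for the initial step by the bounds on $N^{in},P^{in}$), so the lemma applies. Moreover, reading off \eqref{sch:DD:psi}--\eqref{sch:DD:phi} shows that $\phid_\D^{n+1}$ is exactly the electrostatic potential associated to $(\Nd_\D^{n+1},\Pd_\D^{n+1})$ through \eqref{def:assocpot}, while \eqref{sch:DD:w-}--\eqref{sch:DD:w+} coincide with \eqref{def:w}; hence $(\w_\D^{N,n+1},\w_\D^{P,n+1})$ are the quasi-Fermi potentials associated to those densities. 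Applying Lemma~\ref{lem:test} with $(\Nd_\D,\Pd_\D)=(\Nd_\D^{n+1},\Pd_\D^{n+1})$ then gives
\[
	\N(\Nd_\D^{n+1},\Pd_\D^{n+1}) -\N(\Nd_\D^n,\Pd_\D^n) \leq
		\int_\Omega (N_\M^{n+1} - N_\M^n) w_\M^{N,n+1} +
		\int_\Omega (P_\M^{n+1} - P_\M^n) w_\M^{P,n+1}.
\]

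The key step is that, by \eqref{sch:DD:w-}--\eqref{sch:DD:w+}, the potentials $\w_\D^{N,n+1}$ and $\w_\D^{P,n+1}$ lie in $\V_{\D,0}$, hence are admissible test functions. Taking $\v_\D=\w_\D^{N,n+1}$ in \eqref{sch:DD:N} and $\v_\D=\w_\D^{P,n+1}$ in \eqref{sch:DD:P} and multiplying by $\Delta t$ yields
\[
	\int_\Omega (N_\M^{n+1} - N_\M^n) w_\M^{N,n+1} =
		-\Delta t\left[ T_\D^N(\Nd_\D^{n+1}, \w_\D^{N,n+1}, \w_\D^{N,n+1})
		+ \int_\Omega R(N_\M^{n+1},P_\M^{n+1})\, w_\M^{N,n+1} \right],
\]
together with the analogous identity for $P$. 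Substituting both into the bound above collapses its right-hand side into the two dissipative trilinear forms $T_\D^N(\Nd_\D^{n+1},\w_\D^{N,n+1},\w_\D^{N,n+1})$ and $T_\D^P(\Pd_\D^{n+1},\w_\D^{P,n+1},\w_\D^{P,n+1})$, plus a recombination contribution weighted by $w_\M^{N,n+1}+w_\M^{P,n+1}$.

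It then remains to match this recombination term with the one in $\Diss$ defined by \eqref{def:diss}. Adding \eqref{sch:DD:w-} and \eqref{sch:DD:w+}, the electrostatic potential cancels and I get $w_\M^{N,n+1}+w_\M^{P,n+1} = h(N_\M^{n+1}) + h(P_\M^{n+1}) - (\alpha_N+\alpha_P)$. If $r\neq 0$, the compatibility condition \eqref{eq:alpha} forces $\alpha_N+\alpha_P=0$, so this equals $h(N_\M^{n+1}) + h(P_\M^{n+1})$; if $r=0$, then $R\equiv 0$ and the recombination term vanishes altogether, so the identification is trivial. In either case the right-hand side becomes exactly $-\Delta t\,\Diss(\Nd_\D^{n+1},\Pd_\D^{n+1})$, and dividing by $\Delta t$ gives the claim.

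The argument is essentially mechanical once Lemma~\ref{lem:test} is available, so I do not anticipate a serious obstacle; the two points demanding care are the verification that the quasi-Fermi potentials belong to $\V_{\D,0}$ (so that they are legitimate test functions in \eqref{sch:DD:N}--\eqref{sch:DD:P}), and the correct handling of the recombination term, where the compatibility condition \eqref{eq:alpha} is precisely what eliminates the potential-dependent part and lets the term be rewritten through $h$.
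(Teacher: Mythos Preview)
Your proof is correct and follows essentially the same approach as the paper: test \eqref{sch:DD:N}--\eqref{sch:DD:P} with the quasi-Fermi potentials (which lie in $\V_{\D,0}$), use \eqref{eq:alpha} to rewrite the recombination contribution, and invoke Lemma~\ref{lem:test}. Your explicit case distinction $r\neq 0$ versus $r=0$ when identifying $w_\M^{N,n+1}+w_\M^{P,n+1}$ with $h(N_\M^{n+1})+h(P_\M^{n+1})$ is in fact slightly more careful than the paper's one-line justification.
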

\begin{proof}
Since the discrete quasi-Fermi potentials $\w_\D^{N,n+1}$ and $\w_\D^{P,n+1}$ are in $\V_{\D,0}$ by~\eqref{sch:DD:w-} and~\eqref{sch:DD:w+}, one can use them as test functions in \eqref{sch:DD:N} and \eqref{sch:DD:P}: summing the two identities, and noticing that $w_\M^{N,n+1} +w_\M^{P,n+1} = h(N_\M^{n+1}) + h(P_\M^{n+1})$ because of \eqref{eq:alpha}, one gets that 
\[
	\int_\Omega \frac{N_\M^{n+1} - N_\M^n}{\Delta t} w_\M^{N,n+1} + 
		\int_\Omega \frac{P_\M^{n+1} - P_\M^n}{\Delta t} w_\M^{P,n+1} 
		= -\Diss(\Nd_\D^{n+1},\Pd_\D^{n+1}).
\]
One concludes using Lemma \ref{lem:test}.
\end{proof}
From this entropic structure follows an important preservation property of the scheme~\eqref{sch:DD}.
\begin{prop}[Thermodynamic consistency] \label{prop:thermoconsis} 
Let $(\Nd_\D^\infty, \Pd_\D^\infty, \phid_\D^\infty) \in \V_\D^3$ be a discrete stationary state of the scheme~\eqref{sch:DD}, in the sense that there exist 
$\left ( (\Nd_\D^n, \Pd_\D^n, \phid_\D^n) \right ) _{n \in \mathbb{N}} \in {\V_\D^3 }^\mathbb{N}$ a solution to 
\eqref{sch:DD} and $n_\infty \in \mathbb{N}$ such that, 
 \[ 
   \forall n \geq n_\infty, \, (\Nd_\D^n, \Pd_\D^n, \phid_\D^n) = (\Nd_\D^\infty, \Pd_\D^\infty, \phid_\D^\infty).
\]
Then the discrete stationary state coincides with the discrete thermal equilibrium: 
 \begin{equation} \label{eq:preserveq}
 	(\Nd_\D^\infty, \Pd_\D^\infty, \phid_\D^\infty) = (\Nd_\D^e, \Pd_\D^e, \phid_\D^e) .
 \end{equation}
\end{prop}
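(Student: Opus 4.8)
The plan is to exploit the entropy--dissipation relation of Proposition~\ref{prop:entrodiss} together with the fact that the dissipation is a non-negative quantity. I would first apply that relation at the time level $n = n_\infty$: since the state is stationary for $n \geq n_\infty$, one has $(\Nd_\D^{n_\infty+1}, \Pd_\D^{n_\infty+1}) = (\Nd_\D^{n_\infty}, \Pd_\D^{n_\infty}) = (\Nd_\D^\infty, \Pd_\D^\infty)$, so the discrete relative entropy takes the same value at both steps and the left-hand side of the inequality in Proposition~\ref{prop:entrodiss} vanishes. This forces $\Diss(\Nd_\D^\infty, \Pd_\D^\infty) \leq 0$. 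As the dissipation was shown to be non-negative (each of its three summands in \eqref{def:diss} being non-negative), I conclude $\Diss(\Nd_\D^\infty, \Pd_\D^\infty) = 0$, and in particular each of those three terms vanishes individually.

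Next I would extract information from the vanishing of the two diffusive terms $T_\D^N(\Nd_\D^\infty, \w_\D^{N,\infty}, \w_\D^{N,\infty})$ and $T_\D^P(\Pd_\D^\infty, \w_\D^{P,\infty}, \w_\D^{P,\infty})$, where $(\w_\D^{N,\infty}, \w_\D^{P,\infty})$ are the quasi-Fermi potentials associated with the stationary densities through \eqref{sch:DD:w-}--\eqref{sch:DD:w+}, and therefore lie in $\V_{\D,0}$. By \eqref{def:TK} and \eqref{sch2}, $T_\D^N(\Nd_\D^\infty, \w_\D^{N,\infty}, \w_\D^{N,\infty}) = \sum_{K\in\M} r_K(\Nd_K^\infty)\, a_K^{\Lambda_N}(\w_K^{N,\infty}, \w_K^{N,\infty})$. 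Since $\Nd_\D^\infty$ is $I_h$-valued, the reconstruction $r_K(\Nd_K^\infty) \in I_h$ is strictly positive, and by the local coercivity \eqref{def:localcoercivity} each term satisfies $a_K^{\Lambda_N}(\w_K^{N,\infty}, \w_K^{N,\infty}) \geq \lambda_\flat \alpha_\flat |\w_K^{N,\infty}|_{1,K}^2 \geq 0$. Hence the vanishing of the whole sum forces $|\w_\D^{N,\infty}|_{1,\D} = 0$, and likewise $|\w_\D^{P,\infty}|_{1,\D} = 0$ from the term in $P$. Because $|\cdot|_{1,\D}$ is a genuine norm on $\V_{\D,0}$ and both potentials belong to this space, I deduce $\w_\D^{N,\infty} = \w_\D^{P,\infty} = \zero_\D$.

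Finally, I would invoke the bijective correspondence between ($I_h$-valued) discrete densities and discrete quasi-Fermi potentials established in Section~\ref{sec:assoc}: the stationary triplet $(\Nd_\D^\infty, \Pd_\D^\infty, \phid_\D^\infty)$ consists precisely of the densities and electrostatic potential associated to $(\w_\D^{N,\infty}, \w_\D^{P,\infty}) = (\zero_\D, \zero_\D)$. By Remark~\ref{rem:weq}, the null quasi-Fermi potentials correspond exactly to the discrete thermal equilibrium $(\Nd_\D^e, \Pd_\D^e, \phid_\D^e)$, so the uniqueness built into the correspondence yields \eqref{eq:preserveq}.

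I expect the only delicate point to be the bookkeeping in the middle step: one must check that the quasi-Fermi potentials entering the dissipation functional are indeed those attached to the stationary densities \emph{and} that they belong to $\V_{\D,0}$, since it is exactly this membership that turns the seminorm $|\cdot|_{1,\D}$ into a norm and lets the vanishing of the dissipation propagate to the potentials themselves. The strict positivity of $r_K(\Nd_K^\infty)$ (guaranteed by $I_h$-valuation) is the other ingredient that cannot be dropped. Once these are in place, the conclusion follows directly from the entropy structure and the correspondence, with no genuinely hard estimate involved.
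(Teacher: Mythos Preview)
Your proposal is correct and follows essentially the same route as the paper's proof: use Proposition~\ref{prop:entrodiss} at the stationary level to force $\Diss(\Nd_\D^\infty,\Pd_\D^\infty)=0$, exploit the strict positivity of $r_K(\Nd_K^\infty)$ and the local coercivity~\eqref{def:localcoercivity} to deduce $|\w_\D^{N,\infty}|_{1,\D}=|\w_\D^{P,\infty}|_{1,\D}=0$, and conclude via the correspondence of Section~\ref{sec:assoc} and Remark~\ref{rem:weq}. The delicate points you flag (membership of the quasi-Fermi potentials in $\V_{\D,0}$ and positivity of $r_K$) are exactly the ones the paper relies on.
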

\begin{proof}
Let $n \geq n_\infty$, we have 
\[
	\N(\Nd_\D^{n+1},\Pd_\D^{n+1}) -\N(\Nd_\D^n,\Pd_\D^n) 
		= \N(\Nd_\D^\infty,\Pd_\D^\infty) -\N(\Nd_\D^\infty,\Pd_\D^\infty) 
		= 0.
\]
According to Proposition~\ref{prop:entrodiss}, one has by positivity of the dissipation 
\[
	\Diss(\Nd_\D^\infty,\Pd_\D^\infty) = \Diss(\Nd_\D^{n+1},\Pd_\D^{n+1}) =0. 
\]
But, since all the terms of the dissipation are non-negative (see \eqref{def:diss}), it means that 
\[
	T_\D^N (\Nd_\D^\infty, \w_\D^{N,\infty}, \w_\D^{N, \infty}) 
		= T_\D^P (\Pd_\D^\infty, \w_\D^{P,\infty}, \w_\D^{P,\infty}) = 0, 
\]
where $(\w_\D^{N,\infty},\w_\D^{P,\infty}) \in \V_{\D,0}^2$ are the quasi-Fermi potentials associated to the discrete stationary densities $(\Nd_\D^\infty, \Pd_\D^\infty)$. Now, let $K \in \M$. Since all the coordinates of $\Nd_K^\infty$ are positive, $r_K(\Nd_K^\infty)$ is also positive, and by definition~\eqref{def:TK} of $T_K^N$, we get that 
\[
	a_K^N(\w_K^{N,\infty}, \w_K^{N,\infty}) = 0.
\]
Using the local coercivity estimates~\eqref{def:localcoercivity}, we infer that $|\w_K^{N,\infty}|_{1,K}^2 = 0$. This holds for any cell $K \in \M$, so we have $|\w_\D^{N,\infty}|_{1,\D} = 0$, but $\w_\D^{N,\infty} \in \V_{\D,0}$, therefore $\w_\D^{N,\infty} = \zero_\D$. A similar result holds for $\w_\D^{P,\infty}$. Thus, one has 
$	\w_\D^{N,\infty} = \w_\D^{P,\infty} = \zero_\D$
and, by Remark~\ref{rem:poteq}, it means that $(\w_\D^{N,\infty},\w_\D^{P,\infty})$ are equal to the quasi-Fermi potentials at thermal equilibrium. By correspondence between densities and potential discussed in Section~\ref{sec:assoc}, we get~\eqref{eq:preserveq}.
\end{proof}

\begin{rem}[Preservation of the thermal equilibrium]
The statement of Proposition~\ref{prop:thermoconsis} means that the scheme~\eqref{sch:DD} preserves the thermal equilibrium: the only admissible discrete stationary state is the discrete thermal equilibrium.
This remarkable feature is sometimes called thermodynamic consistency in the context of TPFA schemes (see \cite{FPF:18}). In such a framework, this result is expressed in the following way: if the numerical fluxes of the convection-diffusion equations vanish, then the discrete quasi-Fermi potentials are constant (equal to zero). For our HFV scheme, this statement still holds, using the numerical fluxes~\eqref{TKL-flux} defined in Section~\ref{sec:flux}. In fact, both statements in terms of fluxes and stationary states are equivalent. This notion of thermodynamic consistency can indeed be reformulated using the entropic property of the schemes: if the discrete dissipation associated to a discrete solution vanishes, then the solution is the discrete thermal equilibrium.
\end{rem}

\subsection{Existence of solutions}
The goal of this section is to prove the existence result and the estimates~\eqref{eq:unifpos} of Theorem~\ref{th:DD}. 
The proof proposed here is an adaptation of the one introduced in \cite{CHHLM:22} for similar schemes in the context of linear advection-diffusion equations. According to the discussion in Section~\ref{sec:assoc}, it is equivalent to seek discrete solutions in terms of  densities or in terms of quasi-Fermi potentials. Therefore, we will seek discrete quasi-Fermi potentials in the whole space $\V_{\D,0} \times \V_{\D,0}$ instead of discrete densities in the set of $I_h$-valued elements of $\V_{\D} \times \V_{\D}$ (which is not a vector space). 

In the sequel, we consider the vector space $\V_{\D,0}^2$, and denote by $\ww_\D =(\w_\D^N, \w_\D^P) \in \V_{\D,0} ^2$ a typical element. We endow $\V_{\D,0}^2$ with the following natural inner product and norm: 
\[
	\langle \ww_\D, \vv_\D \rangle  = 
	\sum_{K \in \M} \left (w^N_K  v^N_K  + w^P_K v^P_K \right ) 
		+ \sum_{\s \in \E_{int} \cup \E_{ext}^N} \left (w^N_\s  v^N_\s  + w^P_\s v^P_\s \right ) 
		\text{ and } \|\ww_\D\| = \sqrt{\langle \ww_\D,\ww_\D \rangle}.
\]
One can also endow $\V_{\D,0}^2$ with the $l^\infty$ norm $\| . \|_\infty$ defined by 
\[
	\|\ww_\D\|_\infty = \displaystyle\max_{K \in \M, \ \s \in \E_{int}\cup \E_{ext}^N} 
		\left ( |w^N_K|, |w^P_K|, |w^N_\s|, |w^P_\s| \right ) .
\]
Since $\V_{\D,0}^2$ is a finite-dimensional vector space of dimension $ 2\left (|\M|+|\E_{int}|+ |\E_{ext}^N| \right )$  
there exists a constant $c_{\dim}$ only depending on $\dim(\V_{\D,0})$ such that 
\begin{equation} \label{eq:eqnorm}
	\forall \ww_\D \in \V_{\D,0}^2, \  c_{\dim} \| \ww_\D \| \leq\|\ww_\D \|_\infty.
\end{equation} 

We can now establish the existence result by induction on $n$. Let $\Nd_\D^n$ and $\Pd_\D^n$ be two $I_h$-valued vectors, we want to show that there exists a solution to~\eqref{sch:DD:w-}-\eqref{sch:DD:phi}.
Given $\ww_\D =(\w_\D^N, \w_\D^P) \in \V_{\D,0} ^2$, (and $(\Nd_\D , \Pd_\D, \psid_\D)$ the associated discrete densities and electrostatic potential), the application  
\begin{multline*}
	\vv_\D \mapsto 
		\int_\Omega \frac{N_\M - N_\M^n}{\Delta t} v_\M^N +  T_\D^N (\Nd_\D, {\w_\D^N}, \v_\D^N)
	     	 + \int_\Omega R(N_\M,P_\M) v_\M^N  \\
	    + \int_\Omega \frac{P_\M - P_\M^n}{\Delta t} v_\M^P +  T_\D^P (\Pd_\D, {\w_\D^P}, \v_\D^P)
	     	+ \int_\Omega R(N_\M,P_\M) v_\M^P
\end{multline*}
is a continuous linear form on $\V_{\D,0}^2$.
Hence, there exists a unique $\G(\ww_\D) \in  \V_{\D,0}^2$ such that 
\begin{multline}
	\forall \vv_\D \in \V_{\D,0}^2, \ \langle \G(\ww_\D) , \vv_\D \rangle  = 
		\int_\Omega \frac{N_\M - N_\M^n}{\Delta t} v_\M^N +  T_\D^N (\Nd_\D, {\w_\D^N}, \v_\D^N)
	     	 + \int_\Omega R(N_\M,P_\M) v_\M^N  \\
	    + \int_\Omega \frac{P_\M - P_\M^n}{\Delta t} v_\M^P +  T_\D^P (\Pd_\D, {\w_\D^P}, \v_\D^P)
	     	+ \int_\Omega R(N_\M,P_\M) v_\M^P, 
\end{multline}
and it is straightforward to see that $\G(\ww_\D) = (\zero_\D, \zero_\D)$ if and only if $(\Nd_\D,  \Pd_\D, \phid_\D)$ is a solution to~\eqref{sch:DD:w-}-\eqref{sch:DD:phi}. 
Moreover, by the continuity result of Theorem~\ref{th:discreteNLPoisson}, the vector field $\G : \V_{\D,0}^2 \to \V_{\D,0}^2 $ is continuous.
As in~\cite{CHHLM:22}, our proof relies on two key results.
The first one, that can be found, e.g., in~\cite[Section 9.1]{Evans:10}, is a corollary of Brouwer's fixed-point theorem.
\begin{lemma} \label{lem:exist}
  Let $k$ be a positive integer and let $P : \R ^k \to \R^k $ be a continuous vector field. Assume that there is $r > 0$ such that 
  \begin{equation*}
    P(x) \cdot x \geq 0 \qquad\text{ if \;}  |x| = r. 
  \end{equation*}
  Then, there exists a point $x_0 \in \R^k$ such that $P(x_0) =0$ and $|x_0| \leq r$.
\end{lemma}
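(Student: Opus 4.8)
The plan is to argue by contradiction and reduce the statement to Brouwer's fixed-point theorem applied to a suitably normalised version of $-P$. First I would suppose that no such $x_0$ exists, i.e.\ that $P(x) \neq 0$ for every $x$ in the closed ball $\overline{B}_r = \{x \in \R^k \mid |x| \leq r\}$. Under this standing assumption, the map
\[
	F : \overline{B}_r \to \overline{B}_r, \qquad F(x) = -r\,\frac{P(x)}{|P(x)|}
\]
is well defined (the denominator never vanishes) and continuous, since $P$ is continuous and the Euclidean norm is continuous and bounded away from zero on the image of $P$. Crucially, $|F(x)| = r$ for every $x$, so $F$ does map the compact convex set $\overline{B}_r$ into itself (in fact onto its bounding sphere).

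Next I would invoke Brouwer's fixed-point theorem: as $\overline{B}_r$ is nonempty, compact and convex and $F$ is continuous, there exists $x_0 \in \overline{B}_r$ with $F(x_0) = x_0$. Because $|F(x_0)| = r$, this immediately forces $|x_0| = r$, so $x_0$ lies precisely on the sphere where the sign hypothesis of the lemma is available.

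Finally I would exploit $x_0 = F(x_0)$ to compute the inner product
\[
	P(x_0) \cdot x_0 = P(x_0) \cdot \left( -r\,\frac{P(x_0)}{|P(x_0)|} \right) = -r\,|P(x_0)| < 0,
\]
where the strict inequality uses $P(x_0) \neq 0$ (again the contradiction hypothesis). This contradicts the assumption $P(x) \cdot x \geq 0$ on $|x| = r$. Hence the contradiction hypothesis must fail, yielding a point $x_0$ with $|x_0| \leq r$ and $P(x_0) = 0$, as claimed.

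The argument is essentially self-contained once Brouwer's theorem is granted, and I do not anticipate a genuine obstacle. The only point deserving care is the well-definedness and continuity of $F$, which rests entirely on the contradiction hypothesis $P \neq 0$ on $\overline{B}_r$, together with the observation $|F| \equiv r$ that certifies $F$ is a self-map of the ball so that Brouwer applies verbatim.
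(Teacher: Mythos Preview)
Your argument is correct and is precisely the standard proof of this result as a corollary of Brouwer's fixed-point theorem. The paper itself does not spell out a proof but simply cites \cite[Section 9.1]{Evans:10}, where exactly this contradiction argument via the normalised map $F(x)=-r\,P(x)/|P(x)|$ is given, so your proposal matches the paper's approach.
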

\noindent
The second lemma, whose proof is postponed until Appendix~\ref{ap:lemma}, establishes that bounds on the discrete entropy and dissipation terms imply bounds on the discrete quasi-Fermi potentials.
\begin{restatable}{lemma}{lem} \label{lem:bounds}
Let $(\w_\D^N, \w_\D^P) \in \V_{\D,0}^2$, and assume that there exists $B_\sharp \geq 0$ such that
\begin{equation} \label{Entropycontrol}
	 \N(\w_\D^N, \w_\D^P)  \leq B_\sharp \qquad\text{ and } \qquad \Diss(\w_\D^N, \w_\D^P)  \leq B_\sharp.
\end{equation}
Then, there exists $C_\sharp > 0$, depending on the data, $B_\sharp$ and $\D$ such that 
\[
	-C_\sharp \one_\D \leq \w_\D^N \leq C_\sharp \one_\D, \  -C_\sharp \one_\D \leq \w_\D^P \leq C_\sharp \one_\D
		\text{ and } -C_\sharp \one_\D \leq \phid_\D \leq C_\sharp \one_\D, 
\]	
where $\phid_\D$ is the electrostatic potential associated to $(\w_\D^N, \w_\D^P)$.
\end{restatable}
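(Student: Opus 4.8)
The plan is to decouple the three bounds: first control the electrostatic potential directly from the entropy, then propagate density bounds through the mesh using the dissipation, and finally convert the density bounds into bounds on the quasi-Fermi potentials.

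I would begin with $\phid_\D$. Since $\N(\w_\D^N,\w_\D^P)=\N(\Nd_\D,\Pd_\D)\le B_\sharp$ and the two Bregman-type terms in $\N$ (see~\eqref{def:entro}) are nonnegative by convexity of $H$, the remaining term obeys $\tfrac12 a_\D^\phi(\phid_\D-\phid_\D^e,\phid_\D-\phid_\D^e)\le B_\sharp$. As $\phid_\D-\phid_\D^e=\psid_\D-\psid_\D^e\in\V_{\D,0}$, the global coercivity~\eqref{def:globalcoercivity} gives $|\phid_\D-\phid_\D^e|_{1,\D}^2\le 2B_\sharp/(\lambda_\flat^\phi\alpha_\flat)$. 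Because $|\cdot|_{1,\D}$ is a norm on the finite-dimensional space $\V_{\D,0}$, it is equivalent to the $\ell^\infty$ norm (with a $\D$-dependent constant), so all cell and face values of $\phid_\D-\phid_\D^e$ are controlled; adding the fixed bounded vector $\phid_\D^e$ yields $-C_\phi\one_\D\le\phid_\D\le C_\phi\one_\D$ for some $C_\phi=C_\phi(\D,B_\sharp,\text{data})$, which already settles the bound on $\phid_\D$.

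Next I would exploit the dissipation. Dropping the nonnegative recombination term in $\Diss$ and using~\eqref{def:TK}--\eqref{sch2} together with the local coercivity~\eqref{def:localcoercivity}, every summand is nonnegative, so for each cell $K$ and each $\s\in\E_K$ one has $r_K(\Nd_K)\tfrac{|\s|}{d_{K,\s}}(w_K^N-w_\s^N)^2\le B_\sharp/(\lambda_\flat\alpha_\flat)$. Bounding $r_K(\Nd_K)\ge|\E_K|^{-1}m(N_K,N_\s)\ge|\E_K|^{-1}m_h(N_K,N_\s)$ via~\eqref{eq:rK}--\eqref{hyp:m}, and writing $N_K=g(a)$, $N_\s=g(b)$ with $a=w_K^N+\phi_K+\alpha_N$ and $b=w_\s^N+\phi_\s+\alpha_N$ so that $a-b=h(N_K)-h(N_\s)$, the identity~\eqref{eq:mhg} turns this into an edge-wise estimate
\[
	m_h(N_K,N_\s)\big(h(N_K)-h(N_\s)-(\phi_K-\phi_\s)\big)^2\le\kappa_{K,\s},
\]
with $\kappa_{K,\s}$ depending only on $\D$, $B_\sharp$ and the data; the analogous inequalities hold for $P$. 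The core of the argument, and the main obstacle, is the following propagation lemma: for fixed $x$ in a compact subset of $I_h$ and $|\delta|\le 2C_\phi$, the quantity $m_h(x,y)\big(h(x)-h(y)-\delta\big)^2$ tends to $+\infty$ as $y\to0^+$ or $y\to\dmax^-$. This is where~\eqref{hyp:m} is decisive: with $\beta(t)=th(t)-H(t)$ (increasing since $\beta'=th'>0$) one has $m_h(x,y)(h(x)-h(y))=\beta(x)-\beta(y)$, hence $m_h(x,y)(h(x)-h(y))^2=(\beta(x)-\beta(y))(h(x)-h(y))$ is a product of two same-sign factors blowing up at either endpoint of $I_h$, the perturbation $\delta$ being lower order. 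Thus the sublevel set $\{y\in I_h:m_h(x,y)(h(x)-h(y)-\delta)^2\le\kappa\}$ is a compact subset of $I_h$; making this uniform in $x$ on compacts, in $|\delta|\le 2C_\phi$ and in bounded $\kappa$ is the delicate, quantitative point.

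Finally I would run the propagation. At any Dirichlet face $\s\in\E_{ext}^D$ one has $w_\s^N=0$ (as $\w_\D^N\in\V_{\D,0}$) and $\phi_\s=\phi_\s^D$, so $N_\s=g(\phi_\s^D+\alpha_N)=N_\s^D\in[m,M]$ by~\eqref{eq:compconddiscret} and~\eqref{eq:bounddata}: these faces are anchors whose densities lie in a fixed compact subset of $I_h$. Starting from them, and using the edge-wise estimates together with the boundedness of $\phi_K-\phi_\s$ from the first step, the propagation lemma controls the density at the opposite unknown of each incident edge; since the mesh is connected and finite, after finitely many steps all densities $N_K,N_\s$ (and likewise $P_K,P_\s$) lie in a common compact $[\mu,M']\subset\subset I_h$, with $\mu,M'$ depending on $\D$, $B_\sharp$ and the data — the $\D$-dependence being intrinsic to the step-by-step nature of the propagation, in line with Remark~\ref{rem:boundnonuniform}. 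It then suffices to write $w_K^N=h(N_K)-\phi_K-\alpha_N$ (and analogously for faces and for $P$): continuity of $h$, $N_K\in[\mu,M']$ and the bound on $\phi_K$ give $-C_\sharp\one_\D\le\w_\D^N\le C_\sharp\one_\D$ and the same for $\w_\D^P$, which together with the bound on $\phid_\D$ completes the proof.
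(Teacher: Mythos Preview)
Your proof is correct and follows the same two-step architecture as the paper (entropy $\Rightarrow$ bound on $\phid_\D$; dissipation $\Rightarrow$ propagation of bounds on $\w_\D^N,\w_\D^P$ from Dirichlet faces through the connected mesh), but the implementation of each step differs. For $\phid_\D$ you invoke norm equivalence on the finite-dimensional space $\V_{\D,0}$, whereas the paper bounds each jump $|\phi_K-\phi_\s|$ directly from $|\phid_\D|_{1,\D}$ and walks along a path in the mesh; both give a $\D$-dependent $\ell^\infty$ bound. For the propagation, you work in density variables and keep the perturbation $\delta=\phi_K-\phi_\s$ inside $m_h(N_K,N_\s)\big(h(N_K)-h(N_\s)-\delta\big)^2$, then argue $\delta$ is lower order. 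The paper does something slicker: writing $m_h(g(a),g(b))=\frac{G(a)-G(b)}{a-b}$ and using that this difference quotient is monotone in each variable (convexity of $G$), it replaces $\phi_K,\phi_\s$ by the common lower bound $-\phi_\sharp$, which \emph{eliminates} the perturbation and reduces the propagation to the clean statement $(G(u_K)-G(u_\s))(u_K-u_\s)\le\zeta$ with $\u_\D=\w_\D^N+(-\phi_\sharp+\alpha_N)\one_\D$; this sidesteps precisely the uniformity-in-$\delta$ issue you flagged as delicate. One small correction to your sketch: as $y\to 0^+$ the factor $\beta(x)-\beta(y)$ need not blow up (for Boltzmann, $\beta(y)=y-1\to -1$); only $h(x)-h(y)$ does, but since $\beta(x)-\beta(y)$ remains strictly positive the product still diverges, so your conclusion is unaffected.
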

We are now in position to prove the existence of solutions to the scheme and the estimates on these solutions.
\begin{proof}[Proof of existence - Theorem~\ref{th:DD}]
In order to use the result of Lemma~\ref{lem:exist}, notice that by definition of $\G$, and because $w_\M^N + w_\M^P = h(N_\M) + h(P_\M)$, we have
\[		
	\forall \ww_\D \in \V_{\D,0}^2, \ 
		\langle \G(\ww_\D) , \ww_\D \rangle  =
 		\int_\Omega \frac{N_\M - N_\M^n}{\Delta t} w_\M^N + \int_\Omega \frac{P_\M - P_\M^n}{\Delta t} w_\M^P  + 
 		\Diss(\w_\D^N, \w_\D^P),.
\]
Since $(\w_\D^N, \w_\D^P)$ are the quasi-Fermi potentials associated to $(\Nd_\D, \Pd_\D)$, the result of Lemma~\ref{lem:test} ensures that 
\begin{equation}\label{eq:Gw.w}
	\forall \ww_\D \in \V_{\D,0}^2, \  \langle \G(\ww_\D) , \ww_\D \rangle  \geq 
 		\frac{\N(\w_\D^N, \w_\D^P) -\N(\Nd_\D^n,\Pd_\D^n) }{\Delta t } + \Diss(\w_\D^N, \w_\D^P).
\end{equation}
Let $B^n = \N(\Nd_\D^n,\Pd_\D^n)$. According to Lemma~\ref{lem:bounds}, there exists $C^n>0$ depending on the data, $\D$ and $\Delta t$ such that 
\begin{equation}\label{eq:implybound}
\text{ if } \N(\w_\D^N, \w_\D^P)+ \Delta t \, \Diss(\w_\D^N, \w_\D^P) \leq B^n \text{, then } \| \ww_\D \|_\infty \leq C^n.
\end{equation} 
Now, letting $r^n =2C^n/c_{\dim}$, one can notice that for any $\ww_\D \in \V_{\D,0}^2$ such that $\|\ww_\D \| = r^n$, we have $C^n< 2C^n = c_{\dim} \|\ww_\D \|\leq \|\ww_\D \|_\infty$. Therefore, by~\eqref{eq:Gw.w} and contraposition of~\eqref{eq:implybound}, if $\|\ww_\D \| = r^n$, one has 
\[
	\Delta t \langle \G(\ww_\D) , \ww_\D \rangle  \geq
		\N(\w_\D^N, \w_\D^P)+ \Delta t\,  \Diss(\w_\D^N, \w_\D^P) - B^n > B^n-B^n \geq 0.
\]
 Thus, we can use the result of Lemma~\ref{lem:exist} applied to the vector field $\G$, and there exists at least one solution $(\Nd_\D^{n+1},  \Pd_\D^{n+1}, \phid_\D^{n+1})$ to~\eqref{sch:DD:w-}-\eqref{sch:DD:phi}.

To prove the bounds~\eqref{eq:unifpos}, it suffices to note that the dissipation relation~\eqref{eq:diss} implies that 
\[	
	\N^{n+1} \leq \N^n \leq \N^0 \text{ and } \Diss^{n+1} \leq \frac{\N^0}{\Delta t }.
\]
Hence, by Lemma~\ref{lem:bounds}, there exists $C_\sharp$ depending on the data, $\D$, $\Delta t$ and $\N^0$ (but not on $n$) such that 
\[
	 -C_\sharp \one_\D \leq \w_\D^{N,n+1} \leq C_\sharp \one_\D, \; -C_\sharp \one_\D \leq \w_\D^{P,n+1} \leq C_\sharp \one_\D \text{ and } -C_\sharp \one_\D \leq \phid_\D^{n+1}  \leq C_\sharp \one_\D .
\]
 Therefore, by \eqref{def:densities}, $g(-2C_\sharp + \alpha_N) \one_\D \leq \Nd_\D^{n+1} \leq g(2C_\sharp + \alpha_N)\one_\D$, and an analogous estimate holds for $\Pd_\D^{n+1}$. 
\end{proof}
\subsection{Long-time behaviour}
In this section, we analyse the long-time behaviour of the scheme~\eqref{sch:DD}. The analysis proposed here relies on an adaptation of the arguments presented in~\cite{BCCH:17} in the framework of TPFA schemes.
\begin{theorem}[Discrete long-time behaviour] \label{th:longtime}
Let $\left ( (\Nd_\D^n, \Pd_\D^n, \phid_\D^n) \right ) _{n \in \mathbb{N}} \in {\V_\D^3 }^\mathbb{N}$ be a solution to the coupled scheme~\eqref{sch:DD}. 
There exists $\nu_{\D,\Delta t}>0$ depending on the data, $\D$ and $\Delta t$ such that 
\begin{equation} \label{eq:entroexp}
	\forall n \in \mathbb{N}, \, \N^{n+1} \leq (1+ \nu_{\D,\Delta t} \Delta t) ^{-1} \N^n.
\end{equation}
Moreover, the discrete solution converges geometrically fast towards the discrete thermal equilibrium: there exists a positive constant $c_{\D,\Delta t}$ depending on the data, $\D$ and $\Delta t$ such that
\begin{equation} \label{eq:expnorm}
	\forall n \in \mathbb{N}, \,  
		\|N^n_\M -N^e_\M  \|_{L^2(\Omega)}^2 +
	 	\| P^n_\M -P^e_\M\|_{L^2(\Omega)}^2 + 
	 	\| \phi^n_\M -\phi^e_\M\|_{L^2(\Omega)}^2 			
	 	\leq c_{\D,\Delta t} \N^0 (1+ \nu_{\D,\Delta t} \Delta t) ^{-n}.
\end{equation}
\end{theorem}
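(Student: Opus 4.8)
The plan is to establish a discrete entropy--entropy dissipation inequality of the form $\N^{n+1} \le C_{\D,\Delta t}\,\Diss^{n+1}$ and to feed it into the relation~\eqref{eq:diss}. Once such an inequality is available, \eqref{eq:diss} gives $\tfrac{\N^{n+1}-\N^n}{\Delta t} \le -\Diss^{n+1} \le -\tfrac{1}{C_{\D,\Delta t}}\N^{n+1}$, whence $(1+\nu_{\D,\Delta t}\Delta t)\N^{n+1}\le \N^n$ with $\nu_{\D,\Delta t} = C_{\D,\Delta t}^{-1}$; iterating yields~\eqref{eq:entroexp} and the geometric decay of $\N^n$. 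The whole difficulty is therefore concentrated in the functional inequality $\N \le C_{\D,\Delta t}\Diss$, a discrete Poincar\'e-type bound that I would prove for any pair of $I_h$-valued densities whose coordinates lie in $[M_\flat,M_\sharp]$; applying it at time $t^{n+1}$ is then legitimate thanks to the uniform bounds~\eqref{eq:unifpos}, which is also where the dependence of $\nu_{\D,\Delta t}$ on $\D$ and $\Delta t$ enters.

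For the lower bound on the dissipation I would first exploit~\eqref{eq:unifpos}: since all coordinates of the densities stay in $[M_\flat,M_\sharp]$, the mean property~\eqref{hyp:m} together with $m_h(x,y)\ge\min(x,y)$ (which follows by applying the Cauchy mean value theorem to the expression~\eqref{def:mh}) gives $r_K(\Nd_K)\ge M_\flat$ for every $K\in\M$. Combined with the local coercivity~\eqref{def:localcoercivity} and the definition~\eqref{def:TK} of $T_K^N$, this yields $T_\D^N(\Nd_\D,\w_\D^N,\w_\D^N)\ge M_\flat\lambda_\flat\alpha_\flat|\w_\D^N|_{1,\D}^2$, and analogously for $P$; dropping the non-negative recombination term, $\Diss \ge M_\flat\lambda_\flat\alpha_\flat\big(|\w_\D^N|_{1,\D}^2+|\w_\D^P|_{1,\D}^2\big)$.

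The heart of the argument is to bound the entropy by these seminorms. Subtracting the equilibrium identities $\zero_\D = h(\Nd_\D^e)-\phid_\D^e-\alpha_N\one_\D$ and $\zero_\D=h(\Pd_\D^e)+\phid_\D^e-\alpha_P\one_\D$ from~\eqref{def:w}, I would write $w_\M^N = [h(N_\M)-h(N_\M^e)]-[\phi_\M-\phi_\M^e]$ and $w_\M^P = [h(P_\M)-h(P_\M^e)]+[\phi_\M-\phi_\M^e]$. Multiplying the first by $N_\M-N_\M^e$, the second by $P_\M-P_\M^e$, integrating and summing, the two potential contributions combine into $\int_\Omega(\phi_\M-\phi_\M^e)[(P_\M-P_\M^e)-(N_\M-N_\M^e)]$, which by~\eqref{def:assocpot} applied to $\phid_\D-\phid_\D^e\in\V_{\D,0}$ is exactly $a_\D^\phi(\phid_\D-\phid_\D^e,\phid_\D-\phid_\D^e)\ge0$. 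Using $h'\ge h_\flat'>0$ on $[M_\flat,M_\sharp]$ and discarding this non-negative term leaves $h_\flat'\big(\|N_\M-N_\M^e\|_{L^2}^2+\|P_\M-P_\M^e\|_{L^2}^2\big)\le \int_\Omega w_\M^N(N_\M-N_\M^e)+\int_\Omega w_\M^P(P_\M-P_\M^e)$. Cauchy--Schwarz and the discrete Poincar\'e inequality~\eqref{eq:poinca} then bound the right-hand side by $C_{P,\Gamma^D}\big(|\w_\D^N|_{1,\D}\|N_\M-N_\M^e\|_{L^2}+|\w_\D^P|_{1,\D}\|P_\M-P_\M^e\|_{L^2}\big)$, so a Young inequality controls the density deviations by $|\w_\D^N|_{1,\D}^2+|\w_\D^P|_{1,\D}^2$, hence by $\Diss$; testing~\eqref{def:assocpot} again with $\phid_\D-\phid_\D^e$ and using~\eqref{def:globalcoercivity} controls $|\phid_\D-\phid_\D^e|_{1,\D}$ by the same deviations. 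Finally the convexity bounds $0\le H(N_\M)-H(N_\M^e)-h(N_\M^e)(N_\M-N_\M^e)\le \tfrac{h_\sharp'}{2}(N_\M-N_\M^e)^2$ and $\tfrac12 a_\D^\phi(\phid_\D-\phid_\D^e,\phid_\D-\phid_\D^e)\le\tfrac{\lambda_\sharp^\phi\alpha_\sharp}{2}|\phid_\D-\phid_\D^e|_{1,\D}^2$ assemble into $\N\le C_{\D,\Delta t}\Diss$.

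For the convergence estimate~\eqref{eq:expnorm} I would reverse the comparison: the convexity lower bound $H(N_\M^n)-H(N_\M^e)-h(N_\M^e)(N_\M^n-N_\M^e)\ge\tfrac{h_\flat'}{2}(N_\M^n-N_\M^e)^2$ gives $\|N_\M^n-N_\M^e\|_{L^2}^2+\|P_\M^n-P_\M^e\|_{L^2}^2\le \tfrac{2}{h_\flat'}\N^n$, while the Poisson part of the entropy combined with~\eqref{def:globalcoercivity} and~\eqref{eq:poinca} for $\phid_\D^n-\phid_\D^e\in\V_{\D,0}$ gives $\|\phi_\M^n-\phi_\M^e\|_{L^2}^2\le \tfrac{2C_{P,\Gamma^D}^2}{\lambda_\flat^\phi\alpha_\flat}\N^n$. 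Summing and inserting the decay $\N^n\le(1+\nu_{\D,\Delta t}\Delta t)^{-n}\N^0$ yields~\eqref{eq:expnorm}. I expect the main obstacle to be the functional inequality $\N\le C_{\D,\Delta t}\Diss$, and within it the recognition that the coupling term produced by the potential is precisely the non-negative Dirichlet energy $a_\D^\phi(\phid_\D-\phid_\D^e,\phid_\D-\phid_\D^e)$; the rest is bookkeeping with the uniform bounds~\eqref{eq:unifpos}.
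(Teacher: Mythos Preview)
Your proposal is correct and follows essentially the same route as the paper: bound $\Diss$ below by $M_\flat\lambda_\flat\alpha_\flat\big(|\w_\D^N|_{1,\D}^2+|\w_\D^P|_{1,\D}^2\big)$ using~\eqref{eq:unifpos} and~\eqref{def:localcoercivity}, bound $\N$ above by the same quantity via the quadratic comparison of $H$ on $[M_\flat,M_\sharp]$, the identity for $\int w_\M^N(N_\M-N_\M^e)+\int w_\M^P(P_\M-P_\M^e)$ and Poincar\'e, then feed $\nu\N^{n+1}\le\Diss^{n+1}$ into~\eqref{eq:diss}. The only cosmetic difference is that the paper packages your identity for $\int w_\M^N(N_\M-N_\M^e)+\int w_\M^P(P_\M-P_\M^e)$ as an application of Lemma~\ref{lem:test} (with $(\Nd_\D^n,\Pd_\D^n)$ replaced by $(\Nd_\D^e,\Pd_\D^e)$, so that the left-hand side is $\N^n-0$), and uses $\|w_\M\|_{L^2}$ rather than $|\w_\D|_{1,\D}$ as the intermediate quantity, but this is the same argument up to where Poincar\'e is invoked.
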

\begin{proof}
In this proof, we will denote by $c$ a generic constant depending on the data, $\D$ and $\Delta t$ (but not on $n$).
First, recall that the solutions satisfy the bounds \eqref{eq:unifpos}. One can notice (using a Taylor expansion of $H$) that there exists two positive constants $\tilde{c_1}$ and $\tilde{c_2}$ such that for any $(x,y) \in [M_\flat, M_\sharp]$, 
\begin{equation}\label{eq:ineg}
\tilde{c_1} (x-y)^2 \leq H(x) - H(y) - h(y)(x-y) \leq \tilde{c_2} (x-y)^2.
\end{equation}
Therefore, letting 
$\displaystyle \hat{\N}^n = \|N^n_\M - N^e_\M \|_{L^2(\Omega)}^2 +\|P^n_\M - P^e_\M \|_{L^2(\Omega)}^2 + \frac{1}{2}a^\phi_\D(\phid_\D - \phid_\D^e,\phid_\D - \phid_\D^e) $ and
using \eqref{eq:unifpos} and \eqref{eq:ineg}, we deduce that there exist $c_1$ and $c_2$ positive such that 
\begin{equation} \label{eq:entrovshat}
	c_1 \hat{\N}^n  \leq \N^n \leq c_2 \hat{\N}^n.
\end{equation} 
Since $\N(\Nd_\D^e,\Pd_\D^e) = 0$, by Lemma~\ref{lem:test} (used with $(\Nd_\D,\Pd_\D) = (\Nd_\D^n,\Pd_\D^n)$
and $(\Nd_\D^n,\Pd_\D^n) = (\Nd_\D^e,\Pd_\D^e)$), one gets 
\[ 
	c_1 \hat{\N}^n\leq \N^n \leq \int_\Omega \left (N_\M^n - N_\M^e \right ) w_\M^{N,n} + \int_\Omega  \left (P_\M^n - P_\M^e \right ) w_\M^{P,n}.
\]
Then, we use Young inequality (with scaling $c_1$) to obtain
\[ 
	c_1 \hat{\N}^n \leq \frac{c_1}{2} \left ( \| N_\M^n - N_\M^e \|_{L^2(\Omega)}^2 + \| P_\M^n - P_\M^e \|_{L^2(\Omega)}^2  \right )
		 +  \frac{1}{2c_1} \left ( \|w_\M^{N,n} \|_{L^2(\Omega)}^2 + \|w_\M^{P,n}  \|_{L^2(\Omega)}^2  \right ).
\]
Notice that $\| N_\M^n - N_\M^e \|_{L^2(\Omega)}^2 + \| P_\M^n - P_\M^e \|_{L^2(\Omega)}^2 \leq \hat{\N}^n$, therefore we have 
\[
	\frac{c_1}{2} \hat{\N}^n \leq \frac{1}{2c_1} \left ( \|w_\M^{N,n} \|_{L^2(\Omega)}^2 + \|w_\M^{P,n}  \|_{L^2(\Omega)}^2  \right ).
\]
Combining this estimate with~\eqref{eq:entrovshat}, we deduce that 
\begin{equation} \label{eq:entrohat}
	\frac{c_1}{ c_2} \N^n \leq c_1\hat{\N}^n \leq  \frac{1}{c_1} \left ( \|w_\M^{N,n} \|_{L^2(\Omega)}^2 + \|w_\M^{P,n}  \|_{L^2(\Omega)}^2  \right ).
\end{equation}
On the other hand, one has $\Diss^n \geq T_\D^N (\Nd_\D^n, \w_\D^{N,n}, \w_\D^{N,n}) + T_\D^P (\Pd_\D^n, \w_\D^{P,n}, \w_\D^{P,n})$, and, for any $K\in\M$, using~\eqref{def:TK} alongside with the positivity of $r_K(\Nd_K)$ and the local coercivity~\eqref{def:localcoercivity} of $a^N_K$, one gets that 
\[
	T_K^N (\Nd_K^n, \w_K^{N,n}, \w_K^{N,n})\geq r_K(\Nd_K^n) a_K^N(\w_K^{N,n}, \w_K^{N,n}) 
	\geq \alpha_\flat \lambda_\flat   r_K(\Nd_K^n) |\w_K^{N,n}|_{1,K}^2 .
\]
Moreover, by definition of $r_K$, properties on $m$ and $m_h$ and the bounds \eqref{eq:unifpos} on $\Nd_\D^n$, we deduce that 
\[
	r_K(\Nd_K^n) = \sum_{\s \in \E_K} \frac{m(N_K^n, N_\s^n)}{|\E_K|} 
		\geq  \sum_{\s \in \E_K} \frac{m_h(N_K^n, N_\s^n)}{|\E_K|} 
		\geq  \sum_{\s \in \E_K} \frac{m_h(M_\flat,  M_\flat)}{|\E_K|} \geq M_\flat.
\]
The same holds for the term $T_\D^P (\Pd_\D^n, \w_\D^{P,n}, \w_\D^{P,n})$, therefore by the discrete Poincar\'e inequality \eqref{eq:poinca} one has 
\begin{equation} \label{eq:dissw}
	\Diss^n 
		\geq  \alpha_\flat \lambda_\flat M_\flat \left ( |\w_\D^{N,n}|_{1,\D}^2 + |\w_\D^{P,n}|_{1,\D}^2 \right )
		\geq  \frac{\alpha_\flat \lambda_\flat M_\flat}{C_{P,\Gamma^D}^2} \left ( \|w_\M^{N,n}\|_{L^2(\Omega)}^2 + \|w_\M^{P,n}\|_{L^2(\Omega)}^2  \right ).
\end{equation}
Combining \eqref{eq:entrohat} and \eqref{eq:dissw}, we get the following control on the entropy:
\[
	\nu_{\D,\Delta t} \N^n \leq \Diss^n \text{ with } \nu_{\D,\Delta t} = \frac{\alpha_\flat \lambda_\flat M_\flat c_1^2}{c_2 C_{P,\Gamma^D}^2}.
\]
From this inequality and the entropy dissipation relation~\eqref{eq:diss}, we deduce~\eqref{eq:entroexp}. To get~\eqref{eq:expnorm}, notice that $c_1 \hat{\N}^n  \leq \N^n$, so using the discrete Poincar\'e inequality alongside with the coercivity of $a_\D^\phi$, we get
\[
	\|N^n_\M -N^e_\M  \|_{L^2(\Omega)}^2 + \| P^n_\M -P^e_\M\|_{L^2(\Omega)}^2 + \frac{\alpha_\flat \lambda_\flat^\phi }{2 C_{P,\Gamma^D}^2}\| \phi^n_\M -\phi^e_\M\|_{L^2(\Omega)}^2 \leq \hat{\N}^n \leq \frac{1}{c_1} \N^n.
\] 
We conclude by using~\eqref{eq:entroexp}.
\end{proof}
\begin{rem}[Non-uniformity with respect to the mesh] \label{rem:nonuniexp}
The result of Theorem~\ref{th:longtime} states a geometric convergence of the solutions to the scheme towards the discrete thermal equilibrium.
However, the constants involved in~\eqref{eq:entroexp} and~\eqref{eq:expnorm} may depend strongly on the mesh (and in particular on $h_\D$), which yelds a weaker result than the one of~\cite{BCCH:17,BCCH:19} for TPFA schemes. Such a dependency is a reminiscence of the non-monotonicity of the HFV schemes (see Remark~\ref{rem:boundnonuniform}).
In practice, the long-time behaviour of the scheme seems not to depend on the meshsize: see Section~\ref{sec:num:longtime}.
\end{rem}

\section{Numerical results} \label{sec:num}
In this section, we give some numerical evidences of the good behaviour of the scheme~\eqref{sch:DD}. 
We use test-cases inspired by the 2D PN-junction studied in~\cite{BCCH:17}, whose geometry is described in Figure~\ref{fig:diode}.
The domain $\Omega$ is the unit square ${]0,1[}^2$.
For the boundary conditions, we split $\Gamma^D = \Gamma^D_0 \cup \Gamma^D_1 $ with $\Gamma^D_0 = [0,1] \times \{0\}$ and $\Gamma^D_1 = [0, 0.25] \times \{1\}$.
For $i \in \{0,1\}$, we let
\[ 
	N^D = N^D_i , \  P^D = P^D_i \text{ and } \phi^D = \frac{h(N^D_i) - h(P^D_i)}{2}  \text { on } \Gamma^D_i.
\]
To be consistent with the compatibility condition~\eqref{eq:compcond} we assume that there exists a constant $\alpha_0$ such that $\displaystyle 	h(N^D) +  h(P^D) = \alpha_0 $, therefore for given $N^D$ and $\alpha_0$ we set $P^D = g \left(\alpha_0 - h(N^D) \right)$ on $\Gamma^D$. 
Thus, we get $\alpha_N = \alpha_P = \frac{\alpha_0}{2}$. If $r \neq 0$, we finally impose that $\alpha_0 = 0$ to satisfy~\eqref{eq:alpha}. 
\begin{figure}[ht] 
\begin{center}
{\scalefont{1}
\def\svgwidth{5cm}
\begingroup%
  \makeatletter%
  \providecommand\color[2][]{%
    \errmessage{(Inkscape) Color is used for the text in Inkscape, but the package 'color.sty' is not loaded}%
    \renewcommand\color[2][]{}%
  }%
  \providecommand\transparent[1]{%
    \errmessage{(Inkscape) Transparency is used (non-zero) for the text in Inkscape, but the package 'transparent.sty' is not loaded}%
    \renewcommand\transparent[1]{}%
  }%
  \providecommand\rotatebox[2]{#2}%
  \newcommand*\fsize{\dimexpr\f@size pt\relax}%
  \newcommand*\lineheight[1]{\fontsize{\fsize}{#1\fsize}\selectfont}%
  \ifx\svgwidth\undefined%
    \setlength{\unitlength}{419.52755906bp}%
    \ifx\svgscale\undefined%
      \relax%
    \else%
      \setlength{\unitlength}{\unitlength * \real{\svgscale}}%
    \fi%
  \else%
    \setlength{\unitlength}{\svgwidth}%
  \fi%
  \global\let\svgwidth\undefined%
  \global\let\svgscale\undefined%
  \makeatother%
  \begin{picture}(1,1)%
    \lineheight{1}%
    \setlength\tabcolsep{0pt}%
    \put(0.52925868,0.27331711){\color[rgb]{0,0,0}\makebox(0,0)[lt]{\lineheight{1.25}\smash{\begin{tabular}[t]{l}N-region \\ $C = 1$\end{tabular}}}}%
    \put(0.13280309,0.69886021){\color[rgb]{0,0,0}\makebox(0,0)[lt]{\lineheight{1.25}\smash{\begin{tabular}[t]{l}P-region \\ $C = -1$\end{tabular}}}}%
    \put(0.14142531,0.0714536){\color[rgb]{0,0,0}\makebox(0,0)[lt]{\lineheight{1.25}\smash{\begin{tabular}[t]{l}$\color{red}{\Gamma^D_0}$\end{tabular}}}}%
    \put(0,0){\includegraphics[width=\unitlength,page=1]{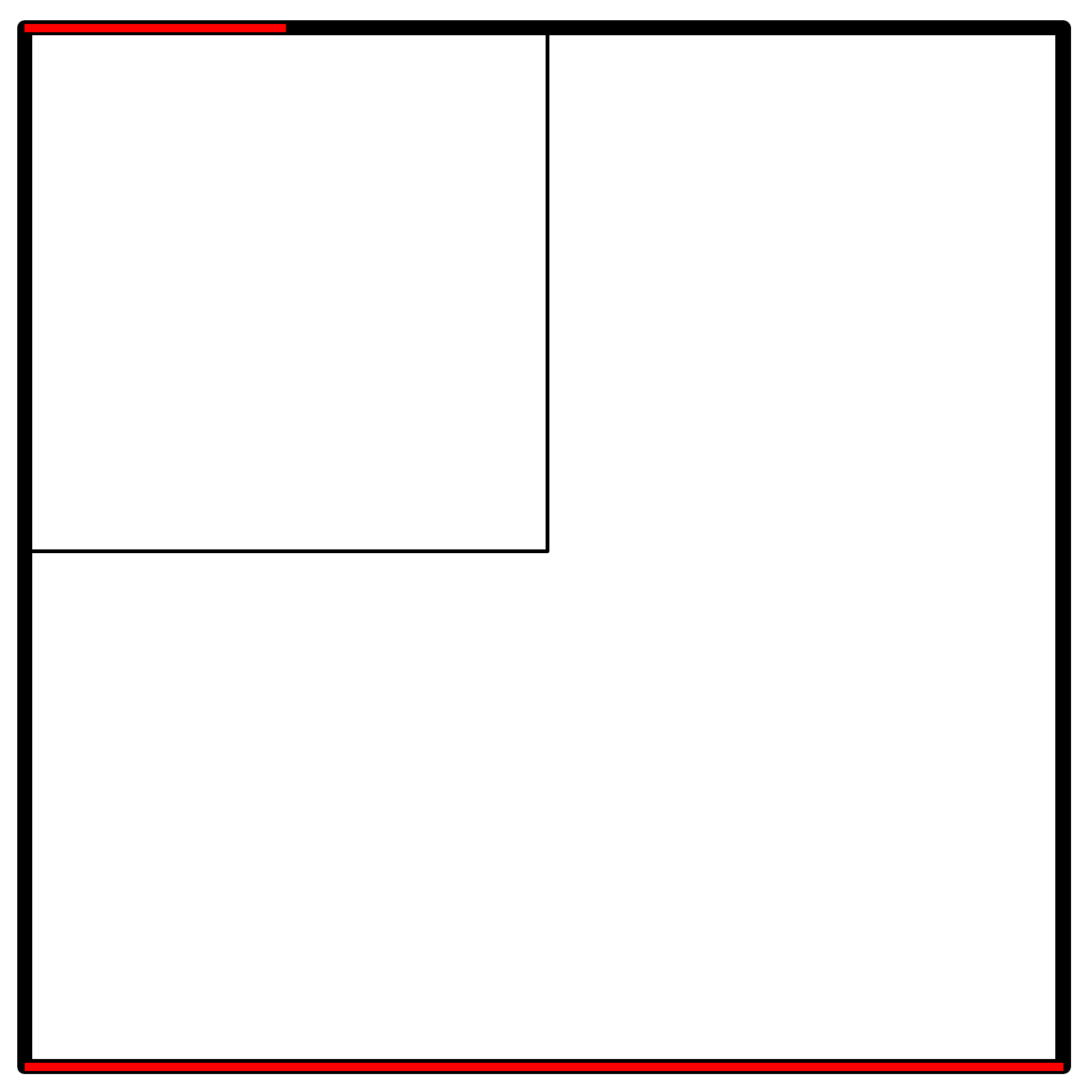}}%
    \put(0.05636858,0.88043292){\color[rgb]{0,0,0}\makebox(0,0)[lt]{\lineheight{1.25}\smash{\begin{tabular}[t]{l}$\color{red}{\Gamma^D_1}$\end{tabular}}}}%
  \end{picture}%
\endgroup%

}
\end{center}
\caption{The geometry of the PN diode.}
\label{fig:diode}
\end{figure}
We use the following initial condition: 
\[
	N_0(x,y) = N^D_1 + (N^D_0 - N^D_1) (1 - \sqrt{y}) \text{ and }
	P_0(x,y) = P^D_1 + (P^D_0 - P^D_1) (1 - \sqrt{y}).
\]
Finally, we use a piecewise constant doping profile $C$, equal to $-1$ in the P-region and $1$ in the N-region (see Figure~\ref{fig:diode}).
%
Concerning the tensors, we assume that the permittivity is isotropic, of the form $\Lambda_\phi = \lambda^2 I_2$, where $\lambda>0$ is the rescaled Debye length. We also assume that the magnetic field is constant, of magnitude $b\geq0$, and that the rescaled mobilities are equal to $1$ ($\mu_N = \mu_P = 1$), therefore the tensors for the convection-diffusion equations are
\begin{equation} \label{def:tensors}
	\Lambda_N = \frac{1}{1+b^2}\begin{pmatrix}
	1 & b \\ 
	-b & 1
	\end{pmatrix} 
	\text{ and }
	 \Lambda_P = \frac{1}{1+b^2}\begin{pmatrix}
	1 & -b \\ 
	b & 1
	\end{pmatrix} .
\end{equation}
\subsection{Implementation}
In this section, we discuss some practical details concerning the implementation of the schemes introduced in this paper.
The mesh families used for the numerical tests are classical Cartesian and triangular families - see for example \cite{HerHu:08}- and a tilted hexagonal-dominant mesh family, depicted on Figure \ref{fig:meshes}.
\begin{figure}[h]
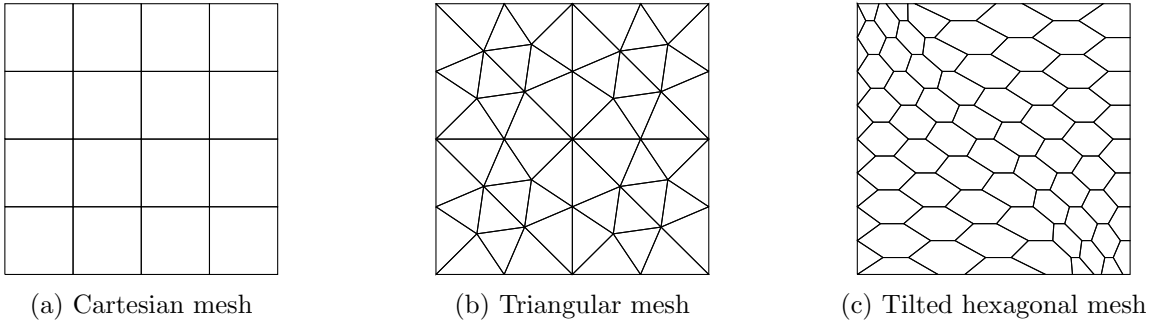

\begin{subfigure}{.33\textwidth}
  \centering
  \includegraphics[width=.65\textwidth]{meshes/mesh2_1.tikz}
  \caption{Cartesian mesh}
  \label{fig:cartesian}
\end{subfigure}
\begin{subfigure}{.33\textwidth}
  \centering
  \includegraphics[width=.65\textwidth]{meshes/mesh1_1.tikz}
  \caption{Triangular mesh}
  \label{fig:triangular}
\end{subfigure}%
\begin{subfigure}{.33\textwidth}
  \centering
  \includegraphics[width=.65\textwidth]{meshes/pi6_tiltedhexagonal_2.tikz}
  \caption{Tilted hexagonal mesh}
  \label{fig:tiltedhexa}
\end{subfigure}%
\caption{\textbf{Implementation.} Coarsest meshes of each family used in the numerical tests.}
\label{fig:meshes}
\end{figure}
These meshes have convex cells, hence we always choose $x_K$ to be the barycentre of $K$. 
Moreover, we use a fixed stabilisation parameter $\eta = 1.5$ (see \eqref{def:stabilisation}).
In the simulations showed below, we use the arithmetic mean as an $m$ function for the reconstruction defined in \eqref{eq:rK}, therefore for $\u_\D \in \V_\D$ and $K \in \M$, 
\[
	r_K(\u_K) = \frac{1}{2} \left ( u_K + \frac{1}{|\E_K|}\sum_{\s \in \E_K}  u_\s \right ).
\]
For a discussion on other choices of reconstructions we refer to \cite[Section 6.2]{CCHHK:20}.
\subsubsection{Finite volume formulation} \label{sec:flux}
The hybrid schemes described in this paper define finite volume methods, in the sense that they can be equivalently rewritten under a conservative forms, with local mass balances, flux equilibration at interfaces, and boundary conditions. For more details about this formulation in the framework of a linear Poisson equation, we refer the reader to~\cite{EGaHe:10}. Let us detail succinctly this formulation for our schemes. Let $\Lambda$ be a generic uniformly elliptic tensor.
For all $K\in\M$, and all $\s\in\E_K$, in the framework of HFV methods, the normal diffusive flux $ - \int_{\s} \Lambda \nabla u \cdot n_{K,\s}$ is approximated by the following numerical flux:
\begin{equation} \label{diff.flux}
	{F_{K,\s}^{\Lambda} (\u_K)} = \sum_{\s ' \in \E_K } A_K ^{\s \s'} (u_K - u_{\s'} ),
\end{equation}
where the $A_K^{\s\s'}$ are defined by 
\begin{equation} \label{def:flux:A}
		A_K ^{\s\s'}  =  \sum_{\s'' \in \E_K } |P_{K,\s''}|\,y_K ^{\s'' \s } \cdot \Lambda _{K, \s''} y_K^{\s''\s'},
\end{equation} 
and the $y_K^{\s\s'}\in\R^d$ only depend on the geometry of the discretisation $\D$ (see, for example, \cite[Eq.~(2.22)]{EGaHe:10} for an exact definition with $\eta = \sqrt{d}$).
For all $K\in\M$, one can express the local discrete bilinear form $a_K^{\Lambda}$ in terms of the local fluxes $\big(F^{\Lambda}_{K,\s}\big)_{\s\in\E_K}$: for all $(\u_K,\v_K)\in\V_K^2$, 
\begin{equation*} \label{aKL-flux}
	a_K^{\Lambda}(\u_K, \v_K) = \sum_{\s \in \E_K } F^{\Lambda}_{K,\s}(\u_K)(v_K - v_\s).
\end{equation*}
With these considerations, the scheme~\eqref{sch:NLPoisson} for the Poisson equation~\eqref{pb:NLPoisson} writes under the following form:
\begin{subequations}\label{sch:NLPoisson:flux}
        \begin{empheq}[left = \empheqlbrace]{align}
			\sum _{\s \in \E _K} F^{\Lambda_\phi}_{K,\s}(\phid_K) 
				&= |K| \left (C_K + g(z^P_K - \phi_K)  - g(z^N_K + \phi_K)  \right )   &&\forall K \in \M, \\
			F^{\Lambda_\phi}_{K,\s}(\phid_K) 
				&= -F^{\Lambda_\phi}_{L,\s}(\phid_L) &&\forall \s = K|L \in \E_{int}, \\
			\phi_\s &= \phi^D_\s  && \forall \s \in \E_{ext}^D, \\
			F^{\Lambda_\phi}_{K,\s}(\phid_K) &= 0  && \forall \s \in \E_{ext}^N \text{ with } \M_\s = \{K\}, 
        \end{empheq}
\end{subequations}
where the fluxes are defined by~\eqref{diff.flux}, and $C_K = \frac{1}{|K|} \int_K C$. The first equation corresponds to local balance, the second imposes the local conservativity of the fluxes at interfaces and the last one enforces the boundary conditions.

Following an analogous approach, we define the nonlinear flux for the advection-diffusion: for any $I_h$-valued $\u_K \in \V_K$ and $\phid_K \in \V_K$, we let
\begin{equation} \label{diff.fluxnl}
	\F^\Lambda_{K,\s}(\u_K,\phid_K) 
	= r_K (\u_K) F^\Lambda_{K,\s}\left (h(\u_K) + \phid_K \right).
\end{equation} 
Therefore, letting $\w_K = h(\u_K) + \phid_K - \alpha \one_K$ (with $\alpha \in \R$), one can write the local trilinear form $T^\Lambda_K$ in terms of the nonlinear flux: for any $\v_K \in \V_K$, 
\begin{equation*} \label{TKL-flux}
	T_K^{\Lambda}(\u_K, \w_K ,\v_K) = \sum_{\s \in \E_K } \F^{\Lambda}_{K,\s}(\u_K,\phid_K)(v_K - v_\s).
\end{equation*}
%
The scheme~\eqref{sch:DD} for the drift-diffusion system then writes under the following form:
\begin{subequations}\label{sch:DD:flux}
        \begin{empheq}{align}
        		&\forall K \in \M, &&|K|\frac{N^{n+1}_K - N^{n}_K}{\Delta t} + \sum _{\s \in \E _K} \F^{\Lambda_N}_{K,\s}(\Nd^{n+1}_K, -\phid^{n+1}_K)  = -|K| R(N^{n+1}_K,P^{n+1}_K),  \\
			&\forall K \in \M,&& |K|\frac{P^{n+1}_K - P^{n}_K}{\Delta t} + \sum _{\s \in \E _K} \F^{\Lambda_P}_{K,\s}(\Pd^{n+1}_K, \phid^{n+1}_K) = -|K| R(N^{n+1}_K,P^{n+1}_K),  \\
			&\forall K \in \M, && \sum _{\s \in \E _K} F^{\Lambda_\phi}_{K,\s}(\phid^{n+1}_K) 
				 = |K| \left (C_K+  P_K^{n+1} -N_K^{n+1}   \right ) ,  \\
			&\forall \s = K|L \in \E_{int},&& \F^{\Lambda_N}_{K,\s}(\Nd^{n+1}_K, -\phid^{n+1}_K) 
				 + \F^{\Lambda_N}_{L,\s}(\Nd^{n+1}_L, -\phid^{n+1}_L) =0 , \\
			&\forall \s = K|L \in \E_{int}, &&\F^{\Lambda_P}_{K,\s}(\Pd^{n+1}_K, \phid^{n+1}_K) 
				 + \F^{\Lambda_P}_{L,\s}(\Pd^{n+1}_L, \phid^{n+1}_L) =0,\\
			&\forall \s = K|L \in \E_{int},&& F^{\Lambda_\phi}_{K,\s}(\phid^{n+1}_K) 
				 +F^{\Lambda_\phi}_{L,\s}(\phid^{n+1}_L)=0 ,\\
			& \forall \s \in \E_{ext}^D, &&N^{n+1}_\s = N^D_\s, \  P^{n+1}_\s 
				 = P^D_\s \text{ and } \ \phi^{n+1}_\s = \phi^D_\s , \\
			& \forall \s \in \E_{ext}^N, 
				&&\F^{\Lambda_N}_{K,\s}(\Nd^{n+1}_K, -\phid^{n+1}_K) 
				= \F^{\Lambda_P}_{K,\s}(\Pd^{n+1}_K, \phid^{n+1}_K)  = F^{\Lambda_\phi}_{K,\s}(\phid^{n+1}_K) = 0, \label{sch:flux:neu}
        \end{empheq}
\end{subequations}
where the nonlinear fluxes are defined by~\eqref{diff.fluxnl}, the initial data $(N^0_K, P^0_K)_{K \in \M}$ are defined as in~\eqref{sch:DD:ini} and the cell $K$ in~\eqref{sch:flux:neu} is such that $\M_\s = \{K\}$.

Note that these formulations yield nonlinear systems. Thus, we can introduce natural functions $\G^{sta}: \V_\D \to \V_\D$ and $\G^{n,\Delta t} : \V_\D^3 \to \V_\D^3$ such that \eqref{sch:NLPoisson:flux} rewrites $\G^{sta}(\phid_\D) =\zero_\D$ and \eqref{sch:DD:flux} writes $\G^{n,\Delta t}(\Nd^{n+1}_\D,\Pd^{n+1}_\D, \phid^{n+1}_\D) = (\zero_\D,\zero_\D,\zero_\D)$. 
The two functions are regular on their domains.

\subsubsection{Newton's method and static condensation} \label{sec:Newton}
The implementation of the nonlinear schemes relies on their finite volume formulation. To fix ideas, we consider the case of the transient scheme~\eqref{sch:DD:flux}. Given $(\Nd^{n}_\D, \Pd_\D^n) \in \V_\D^2$ $I_h$-valued, we want to solve the nonlinear system $\G^{n,\delta t}(\Nd^{n+1}_\D,\Pd^{n+1}_\D, \phid^{n+1}_\D) =(\zero_\D,\zero_\D,\zero_\D)$ (with a time step $\delta t$ instead of $\Delta t$).
The resolution of this system relies on a Newton method with time step adaptation.

First, one initialises the method with initial guess $(\tilde{\Nd}^{n}_\D, \tilde{\Pd}_\D^n) \in \V_\D^2$, where the coordinates of $\tilde{\Nd}^{n}_\D$ (respectively $\tilde{\Pd}_\D^n$) are the projections of the coordinates of $\Nd^{n}_\D$ (resp. $\Pd_\D^n$) on $[\epsilon, \dmax -\epsilon]$ (if $\dmax = +\infty$, we project on $[\epsilon, +\infty[$) in order to avoid potential problems due to the singularity of $h$ near $0$ and $\dmax$. \newline
The computation of the residues follows the process described below: 
let us denote by $R_\M \in \R^{3|\M|}$ and $R_\E \in \R^{3|\E|}$ the residue vectors $(r_K^N, r_K^P, r^\phi_K)_{K\in\M}$ and $(r_\s^N, r_\s^P, r^\phi_\s)_{\s \in \E}$. They are solution to the following linear block system: 
\begin{equation} \label{sysmatrix}
	\begin{pmatrix}
	\Mat_\M & \Mat_{\M,\E} \\ 
	\Mat_{\E,\M} & \Mat_\E
	\end{pmatrix} 
	\begin{pmatrix} R_\M \\  R_\E \end{pmatrix} 
	= \begin{pmatrix} S_\M \\  S_\E \end{pmatrix},  
\end{equation}
where $\Mat_\M \in \R^{3|\M| \times 3|\M| }$, $\Mat_{\M,\E} \in \R^{3|\M| \times 3|\E|}$, $\Mat_{\E,\M} \in \R^{3|\E| \times 3|\M|}$, $\Mat_\E \in \R^{3|\E|\times 3|\E|}$, and  $S_\M$ and $S_\E$ are vectors of size $3|\M|$ and $3|\E|$ issued from the previous iteration.
By construction, the matrix $\Mat_\M$ is block diagonal with $3 \times 3$ invertible diagonal blocks.
Therefore, this matrix can be inverted at a very low computational cost, inverting only small matrices.
Thus, we can eliminate the cell unknowns, noticing that
\begin{equation} \label{bulkunk}
	R_\M = \Mat_\M ^{-1} \left ( S_\M - \Mat_{\M,\E} R_\E \right ).
\end{equation}
Using this relation, one shows that $R_\E$ is the solution to the following linear system:
\begin{equation}\label{squelunk}
	\left ( \Mat_\E - \Mat_{\E,\M}\Mat_\M ^{-1} \Mat_{\M,\E} \right ) R_\E 
		= S_\E - \Mat_{\E,\M} \Mat_\M ^{-1} S_\M, 
\end{equation}
where $\displaystyle \Mat_\D = \Mat_\E - \Mat_{\E,\M}\Mat_\M ^{-1} \Mat_{\M,\E}$, the Schur complement of the block $\Mat_\M$, is an invertible matrix of size $3|\E| \times 3|\E|$. 
In practice, we solve the linear system \eqref{squelunk} using an LU factorization algorithm, and we use the solution $R_\E$ in order to compute $R_\M$ from \eqref{bulkunk}. This technique, called static condensation, allows one to replace a system of size $3(|\E|+|\M|)$ by a system of size $3|\E|$ without any additional fill-in.
As a stopping criterion for the Newton iterations, we compare the $l^\infty$ relative norm of the residue with a threshold $tol$.
If the method does not converge after $i_{max}$ iterations, we divide the time step by $2$ and restart the resolution. 
When the method converges, one can proceed to the approximation of $(\Nd^{n+2}_\D,\Pd^{n+2}_\D, \phid^{n+2}_\D)$, with an initial time step of $\min(\Delta t,  1.4 \times  \delta t)$. 
\newline
The initial time step (used to compute $(\Nd^{1}_\D,\Pd^{1}_\D, \phid^{1}_\D)$) is $\Delta t$. In practice, we use $\epsilon = 10^{-9}$, $i_{max} = 50$ and $tol = 10^{-10}$.

For the discrete thermal equilibrium, we solve the nonlinear system $\G^{sta}(\phid_\D) =\zero_\D$ (with $(z^P,z^N) = (\alpha_N, \alpha_P)$) using a similar approach, based on a Newton's method alongside with a continuation method.
Moreover, note that the counterpart in this case of the matrix $\Mat_\M$ is diagonal with non-zero entries, so its inversion is straightforward.
\subsection{Proof of concept} \newcounter{numtest} \addtocounter{numtest}{1}
In this section, we are interested in qualitative and quantitative properties of the discrete solutions, and we present the profiles of some computed discrete solutions.

\begin{figure}[!htb]
\begin{subfigure}{0.32\textwidth}\caption{$P_\M^0$} 
  \includegraphics[width=\linewidth]{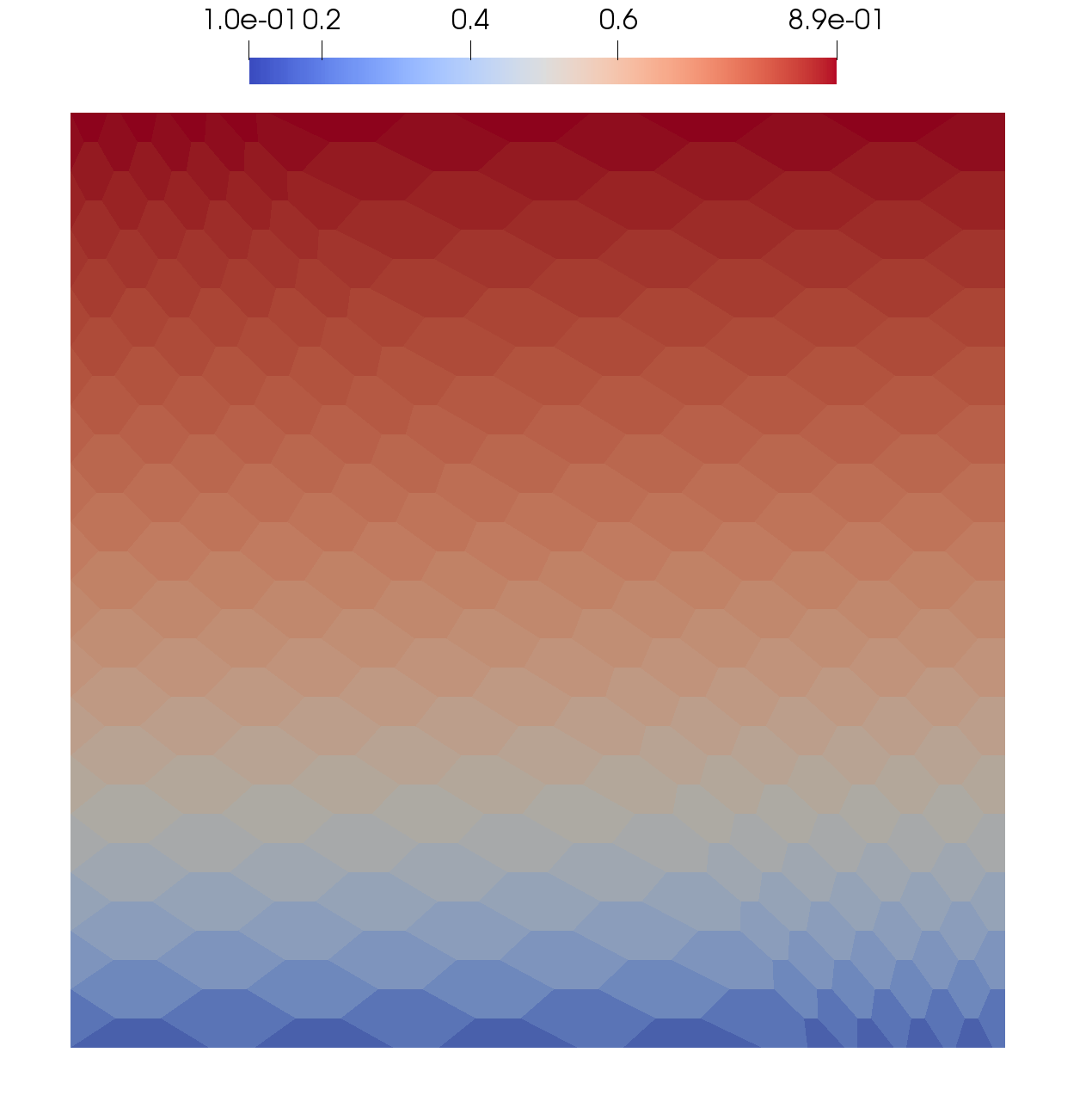}
\end{subfigure}\hfill
\begin{subfigure}{0.32\textwidth}  \caption{$P_\M^n$ with $t^n = 0.1$} 
  \includegraphics[width=\linewidth]{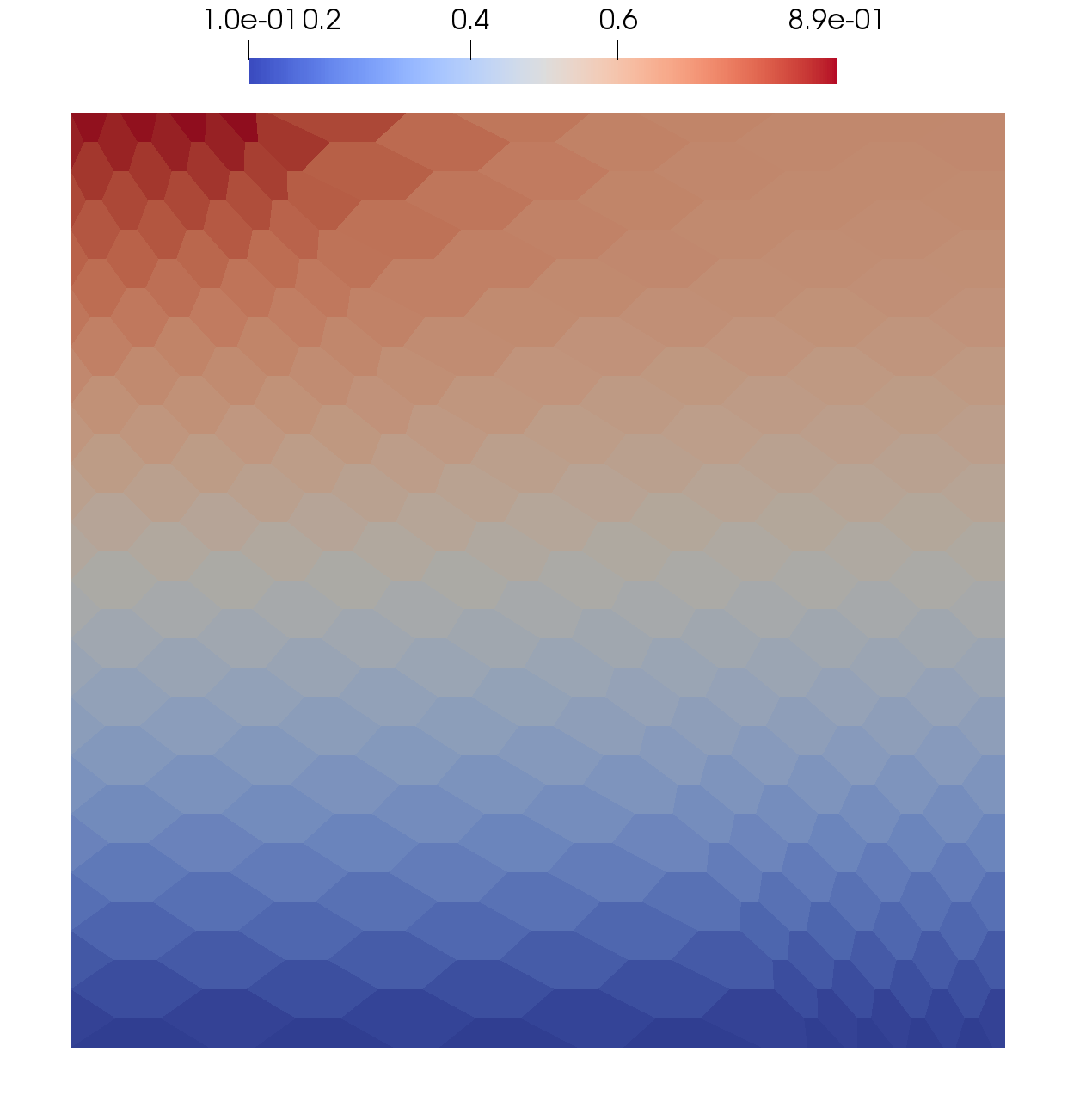}
\end{subfigure}\hfill
\begin{subfigure}{0.32\textwidth}  \caption{$P_\M^n$ with $t^n = 0.5$} 
  \includegraphics[width=\linewidth]{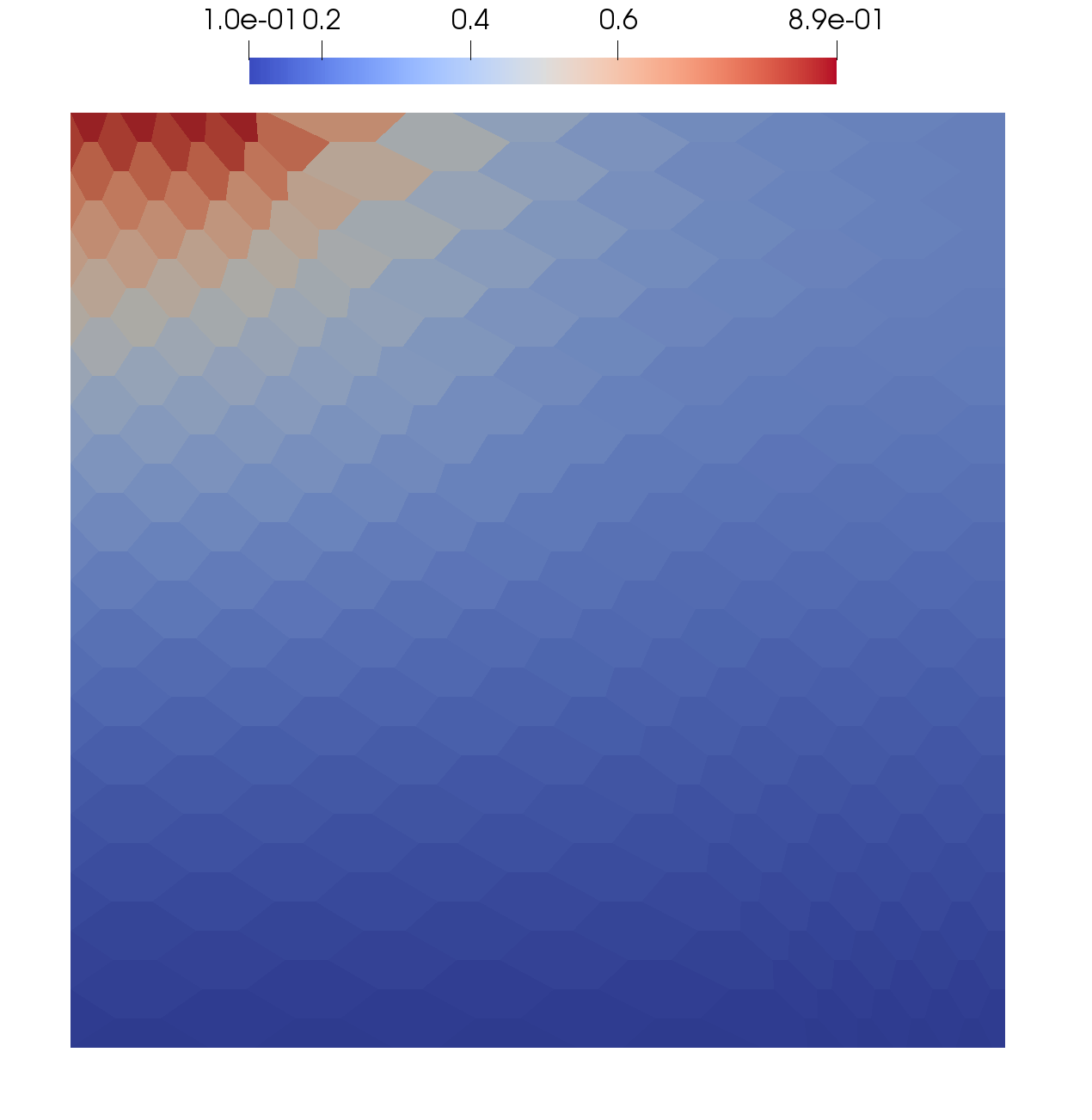}
\end{subfigure}
\caption{\textbf{Test-case \thenumtest}. Evolution of the discrete density of holes}
\label{fig:profile:Boltzmann}
\end{figure}
For the test-case \thenumtest, we use a set of data introduced in~\cite{CHFi:07}, with Boltzmann statistics ($h = \log$), no recombination ($r=0$), no magnetic field ($b=0$), $\lambda = 1$, boundary values $N^D_0 = 0.9$, $N^D_1 = 0.1$ and $\alpha_0 = \log \left ( N^D_0 \times N^D_1 \right) $. We perform a simulation on a tilted hexagonal-dominant mesh constituted of 280 cells, with $\Delta t = 0.1$.  
On Figure~\ref{fig:profile:Boltzmann}, we show the profiles of $P_\M^n$ for different values of $n$. The evolution is in accordance with results obtained in the TPFA context (see \cite{CHFi:07}, and notice that we use a different initial condition). Note that the discrete density remains positive, as expected. In fact, even on coarser meshes, the positivity of the densities is preserved (both for the cell and edge values). 

\addtocounter{numtest}{1}
Now, we want to assert the robustness of the scheme with respect to the $h$ function and the anisotropy.%
\begin{figure}[!ht]
\begin{subfigure}{0.32\textwidth}\caption{$P_\M^0$}  \label{subfig:profil:0}
  \includegraphics[width=\linewidth]{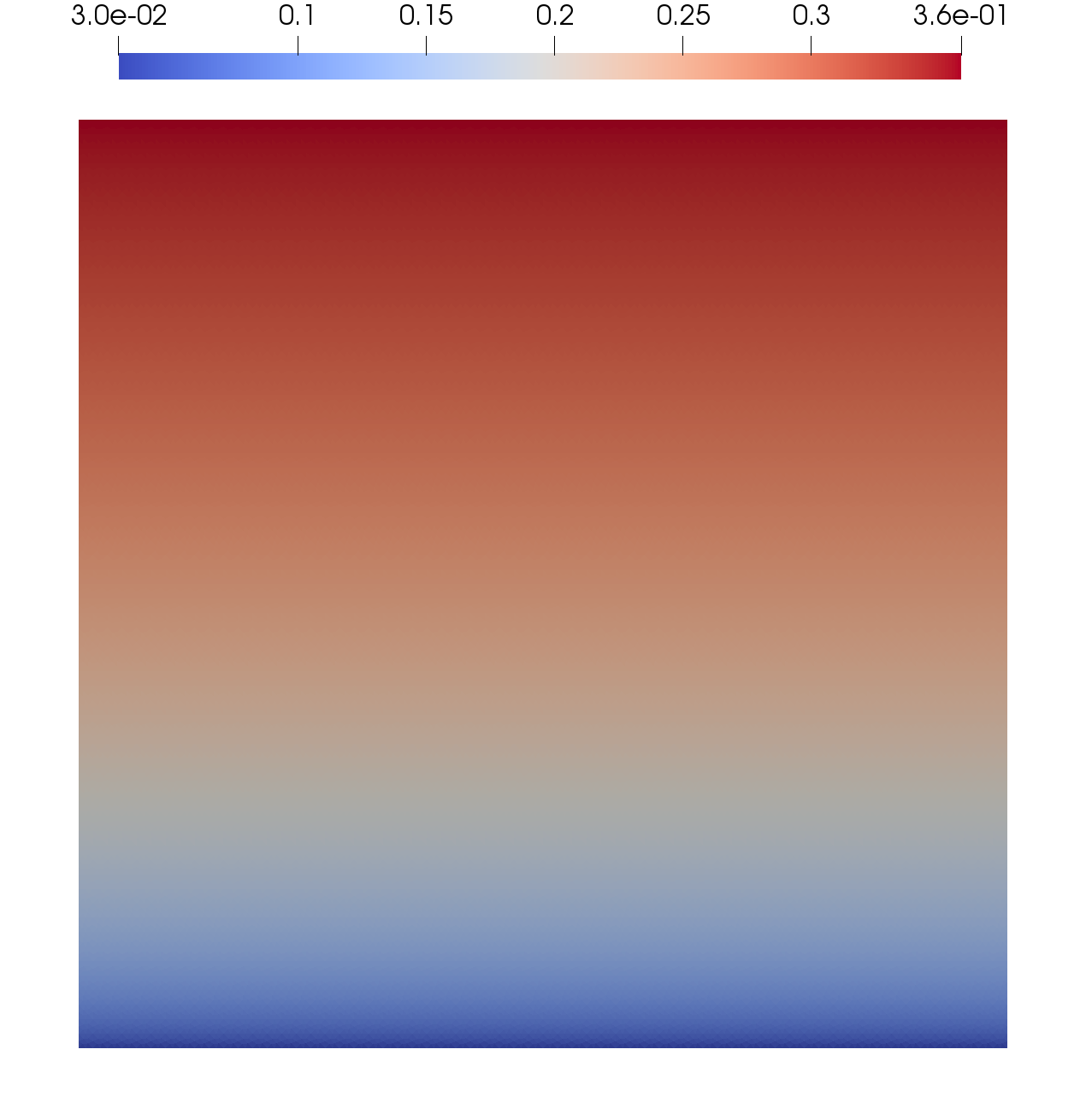}
\end{subfigure}\hfill
\begin{subfigure}{0.32\textwidth}  \caption{$P_\M^n$ with $t^n = 0.001$}  \label{subfig:profil:1}
  \includegraphics[width=\linewidth]{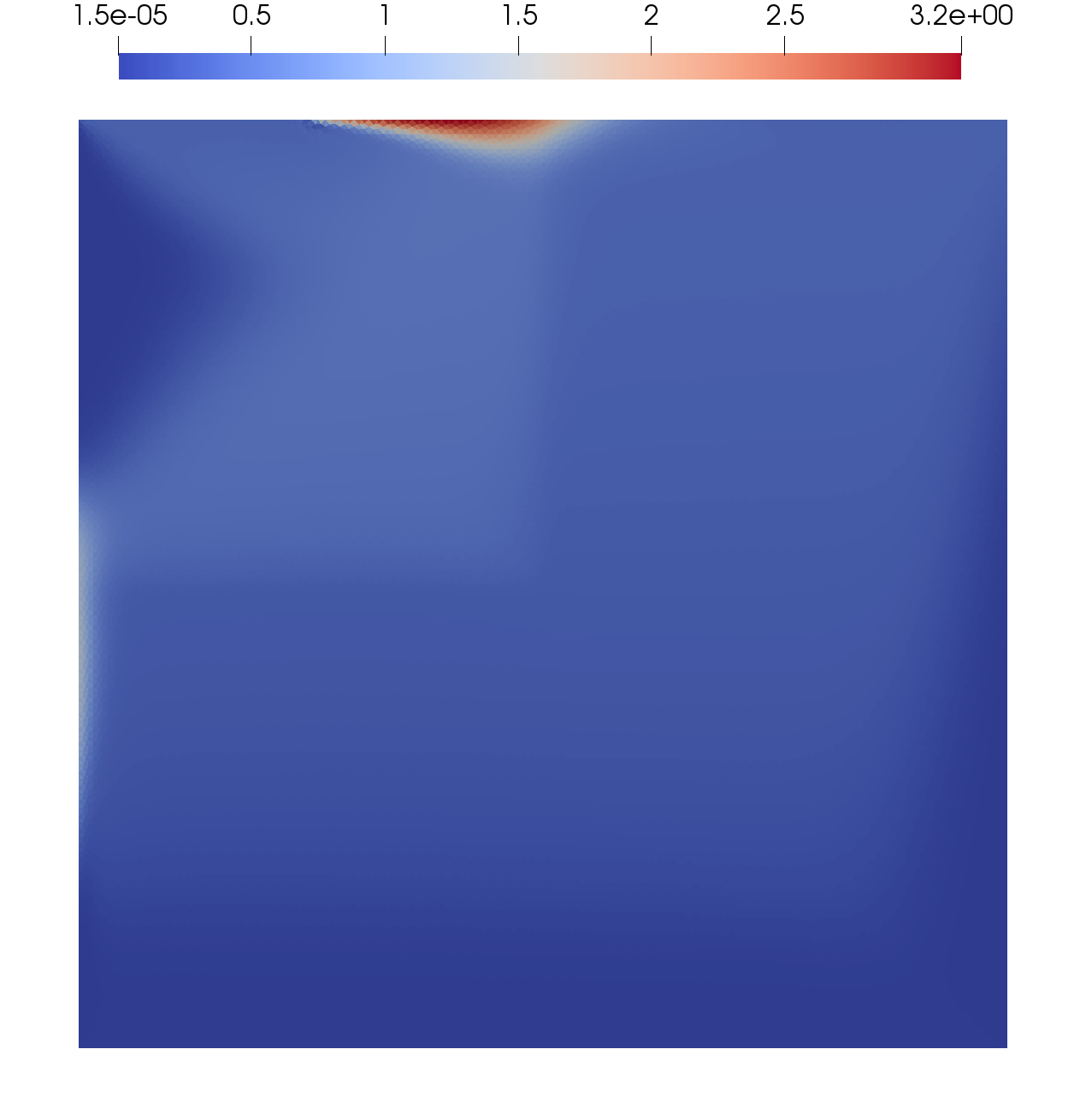}
\end{subfigure}\hfill
\begin{subfigure}{0.32\textwidth}  \caption{$P_\M^n$ with $t^n = 0.004$}  \label{subfig:profil:2}
  \includegraphics[width=\linewidth]{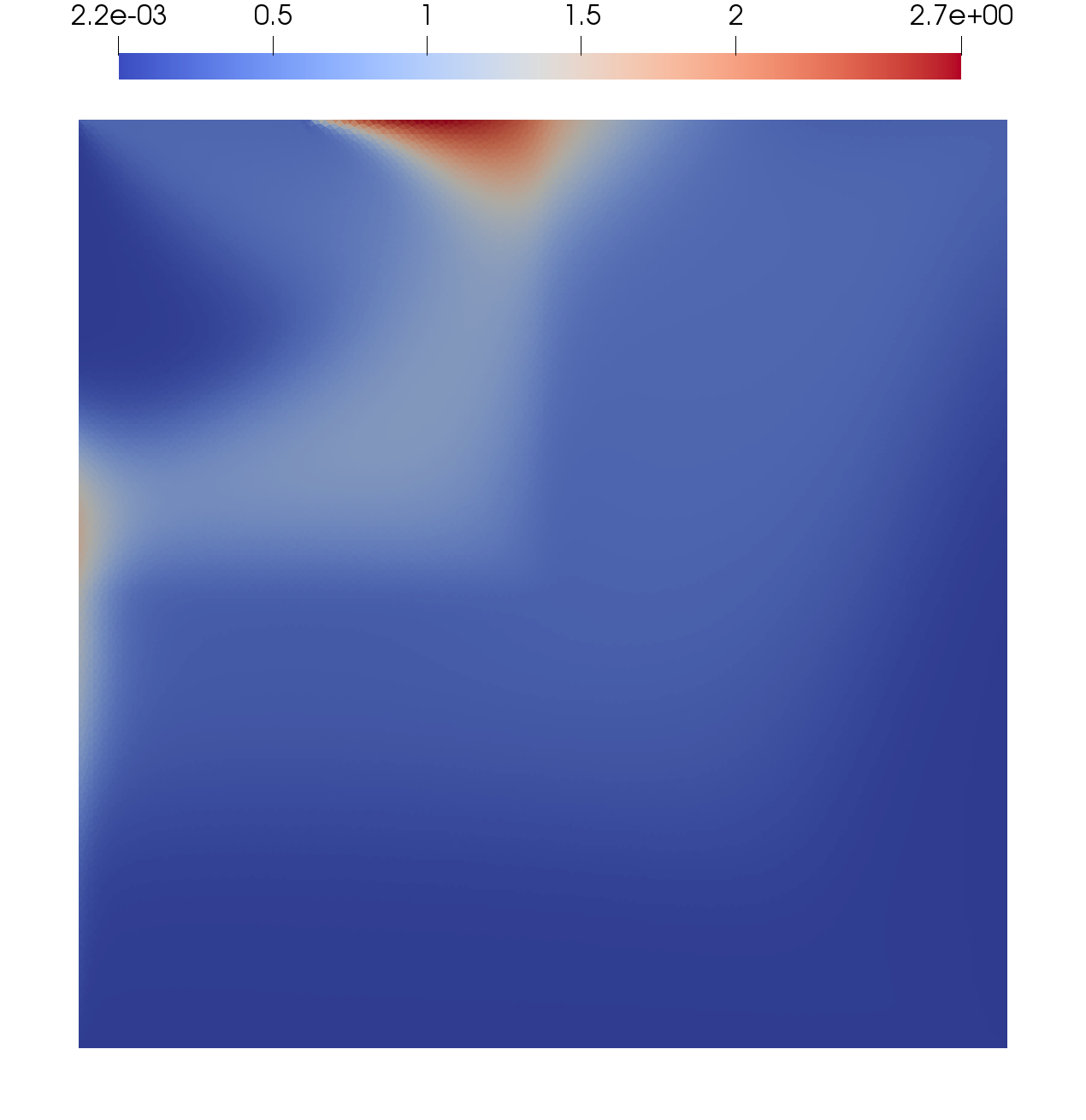}
\end{subfigure}
\begin{subfigure}{0.32\textwidth}  \caption{$P_\M^n$ with $t^n = 0.01$}  \label{subfig:profil:3}
  \includegraphics[width=\linewidth]{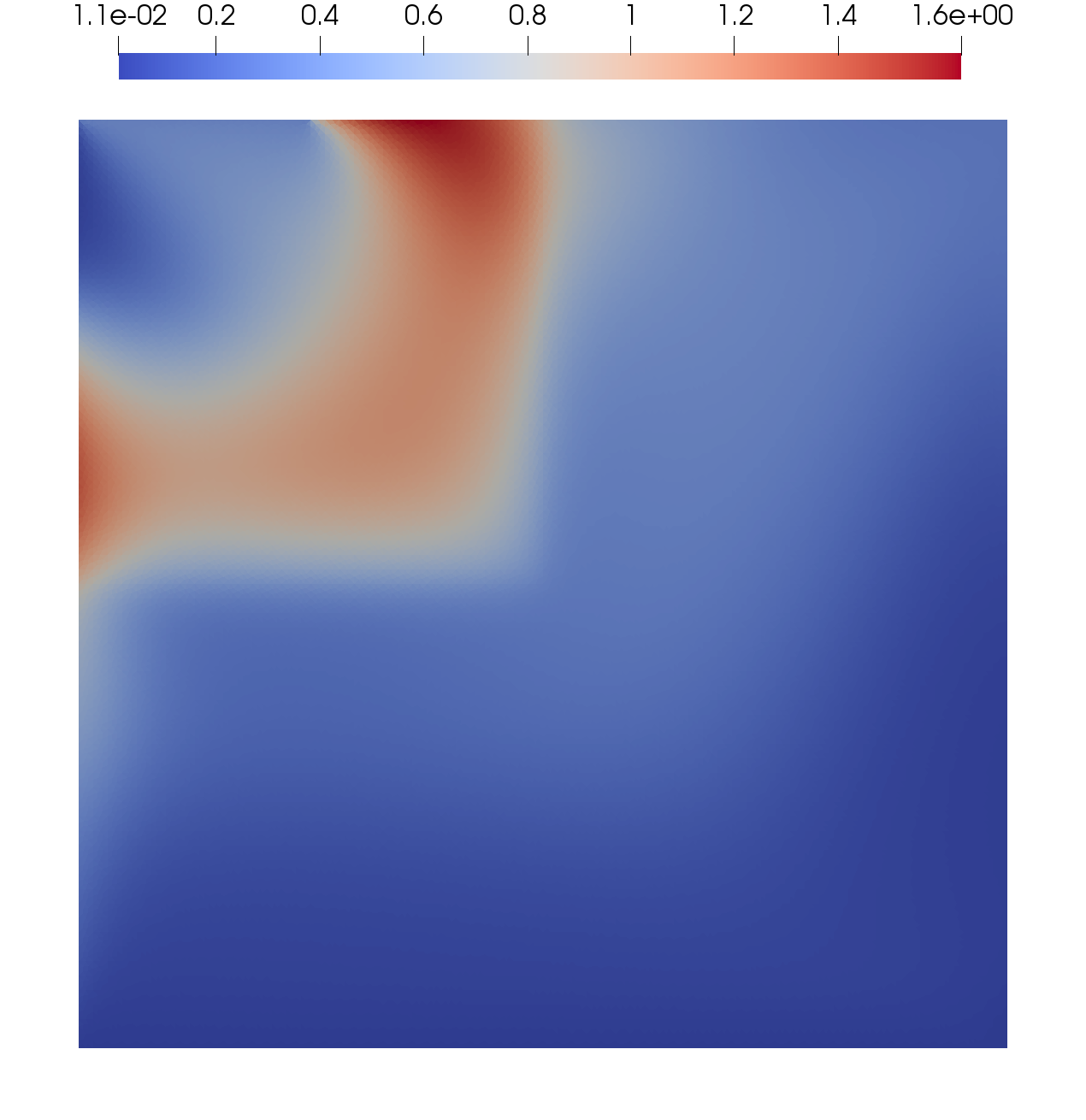}
\end{subfigure}\hfill
\begin{subfigure}{0.32\textwidth}  \caption{$P_\M^n$ with $t^n = 2$}  \label{subfig:profil:4}
  \includegraphics[width=\linewidth]{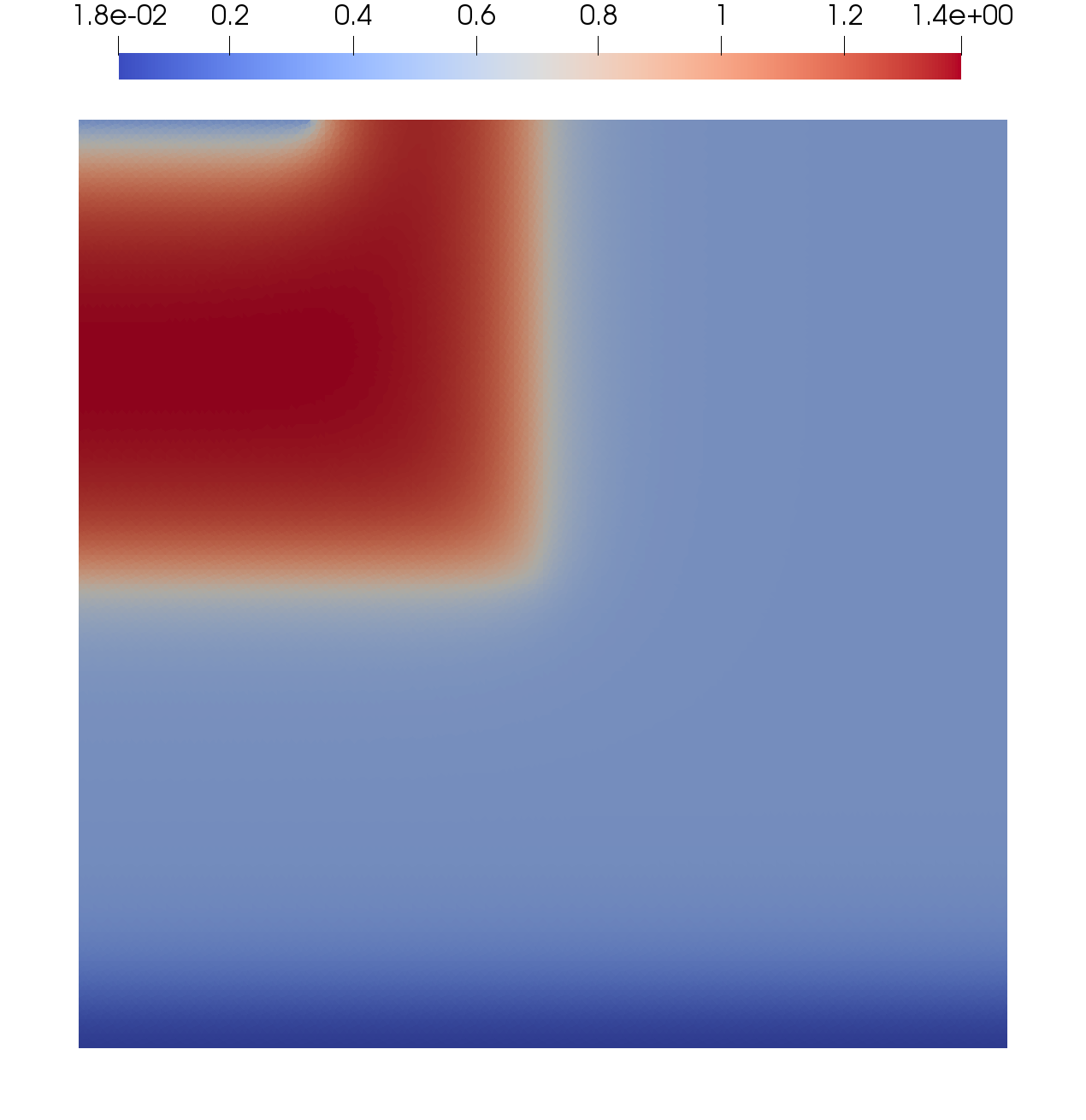}
\end{subfigure}\hfill
\begin{subfigure}{0.32\textwidth}  \caption{$P_\M^e$}  \label{subfig:profil:eq}
  \includegraphics[width=\linewidth]{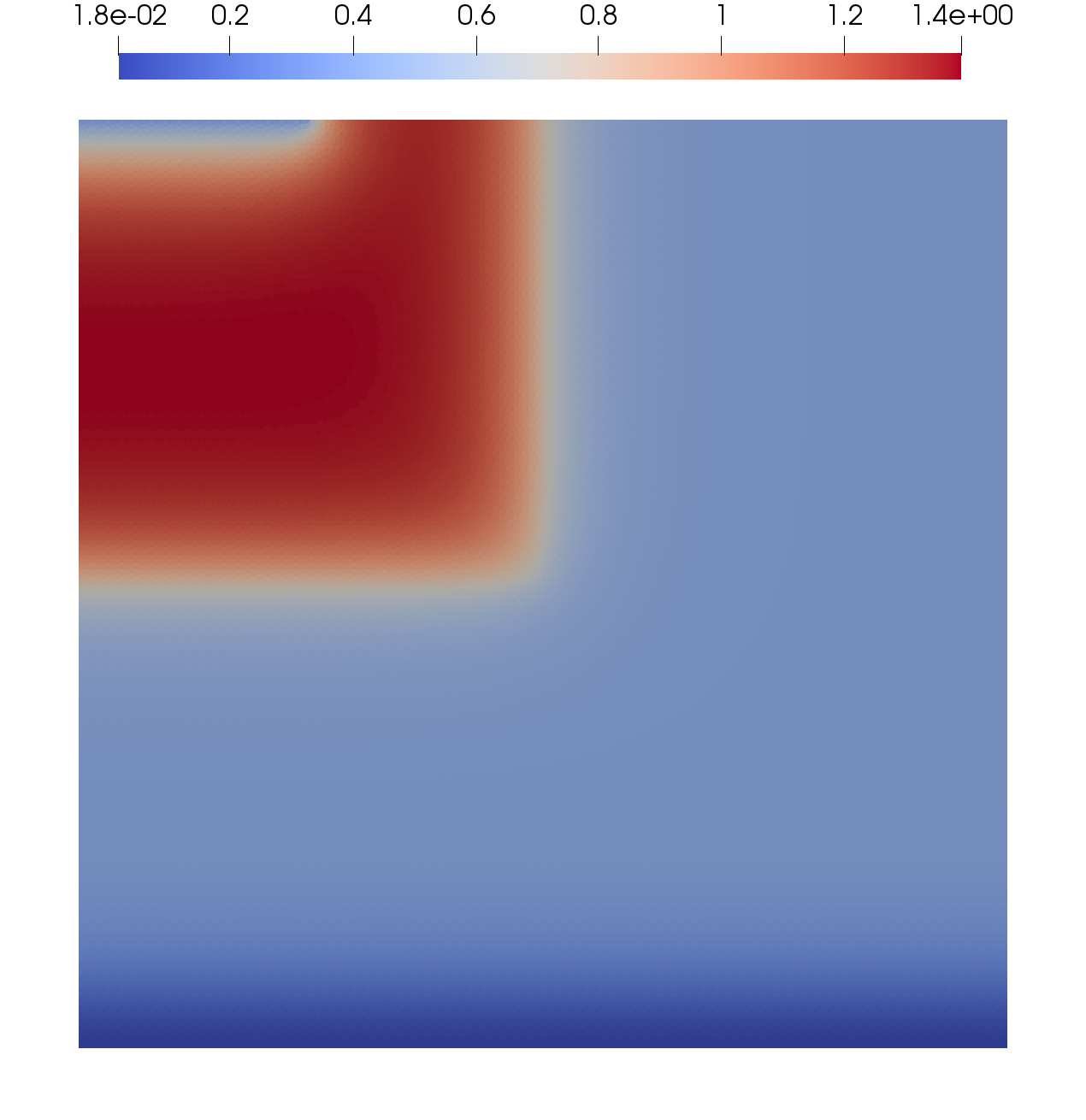}
\end{subfigure}
\caption{\textbf{Test-case \thenumtest} ($b=1$). Evolution of the discrete density of holes (note that the scale varies from a figure to the other)}
\label{fig:profile:Blakemore}
\end{figure}
We consider the test-case~\thenumtest, with Blakemore statistics ($h(s) = \log \left (\frac{s}{1-\gamma s } \right)$, $\gamma = 0.27$), a SRH recombination term ($r(N,P) = \frac{10}{1+N+P}$) and a strong magnetic field $b=1$. We also use a (realistic) small Debye length $\lambda = 0.05$. In order to check the fact that the discrete densities are $I_h$-valued (here, $\dmax = 1/\gamma \approx 3.7$), we consider boundary values close to the maximum admissible densities: $N_0^D = 3.5$ and $N_1^D= 1.5$. Since $r \neq 0$, we let $\alpha_0 = 0$. We perform a simulation on a refined triangular mesh (57 344 cells), with a time step $\Delta t = 0.1$.
We show the profile of $P_\M^n$ for different values of $n$ in Figure~\ref{fig:profile:Blakemore}. Notice that this test-case is subject to boundary layers (essentially because $\lambda$ is small and $b$ is big, see the discussion below and Table~\ref{tab:comp}), therefore the scheme performs numerous adaptations of the time step at the beginning of the simulation (the first admissible time step is $\delta t \approx 10^{-8}$). Moreover, one can see a rotation movement for the density of the holes, which is in accordance with the expected physical effect of the magnetic field. 
\begin{figure}[h]
\pgfplotsset{width=0.95\linewidth,height=0.35\linewidth,compat=newest}
\begin{minipage}[c]{1\linewidth}
\raggedright
	\begin{tikzpicture}[scale= 0.99] 
        \begin{loglogaxis}[
        		xmin=1.0e-8,
        		xmax=1,
        		ymin= 7.0e-7,
            	legend style = { 
              	at={(0.05,0.075)},
              	anchor = south west,
              	tick label style={font=\footnotesize},
              	legend columns= 1
            			},
            	ylabel=\small{Distance from $a$ \vphantom{Time step Minimal values}},
            	xlabel=\small{Time},
            	xticklabel pos=top,
          ]
          \addplot[mark=pentagon*, draw=none, purple]	table[x=Temps,y=Diff_max_N] {tps_full};
          \addplot[mark=star, draw=none, blue]	table[x=Temps,y=Diff_max_P] {tps_full};
          \legend{\tiny $a - \max \Nd_\D^n $,\tiny $a - \max \Pd_\D^n$, } 
	      \end{loglogaxis}
	\end{tikzpicture} 
%
      \begin{tikzpicture}[scale= 0.99]
        \begin{loglogaxis}[
        		xmin=1.0e-8,
        		xmax=1,        		
        		ymin= 1.0e-8,
        		max space between ticks=25,
            	legend style = { 
             	at={(0.05,0.075)},
              	anchor = south west,
              	tick label style={font=\footnotesize},
              	legend columns=1
            			},
            	ylabel=\small{Minimal values \vphantom{Time step Distance from $a$}},
            	xticklabels= \empty, 
          	]
          \addplot[mark=pentagon*, draw=none, purple] table[x=Temps,y=min_N] {tps_full};
          \addplot[mark=star, draw=none, blue]	table[x=Temps,y=min_P] {tps_full};
          \legend{\tiny $\min \Nd_\D^n$,\tiny $\min \Pd_\D^n$ } 
	      \end{loglogaxis}
      \end{tikzpicture} 
      \begin{tikzpicture}[scale= 0.99]  
		\pgfplotsset{
    			xmin=1.0e-8,
        		xmax=1,
        		legend style = { 
              	at={(0.05,0.9)},
              	anchor = north west,
              	tick label style={font=\footnotesize},
              	legend columns=1
            			}
			}
      	\begin{loglogaxis}[
        		ymax=0.2,
        		ymin=9e-9,
        		axis y line*=left,
			axis x line=none,
            	ylabel=\small{Time step \vphantom{Distance from $a$ Minimal values}},
          	]
          	\addplot[mark=triangle, mark size=1.,  line width=0.5pt, draw=orange] table[x=Temps,y=time_step] {tps_full};
          	\label{step}
	   	\end{loglogaxis}
        	\begin{semilogxaxis}[
        		ymin = 0.,
        		axis y line*=right,
        		ymax=37,
        		max space between ticks= 20,
            	ylabel=\small{\#  of Newton iterations},
            	xlabel=\small{Time}
          	]
          	
          	\addplot[mark=*, draw=none, forestgreen] table[x=Temps,y=Nb_iter] {tps_full};
          	\addlegendimage{/pgfplots/refstyle=step};
          	\addlegendentry{\tiny Number of iterations};
          	\addlegendentry{\tiny Time step}        	
	      	\end{semilogxaxis}
      \end{tikzpicture} 
\end{minipage}
	\caption{\textbf{Test-case \thenumtest} ($b=1$). Evolution of the discrete extremal values, time step and cost}
	\label{fig:bounds}
\end{figure}
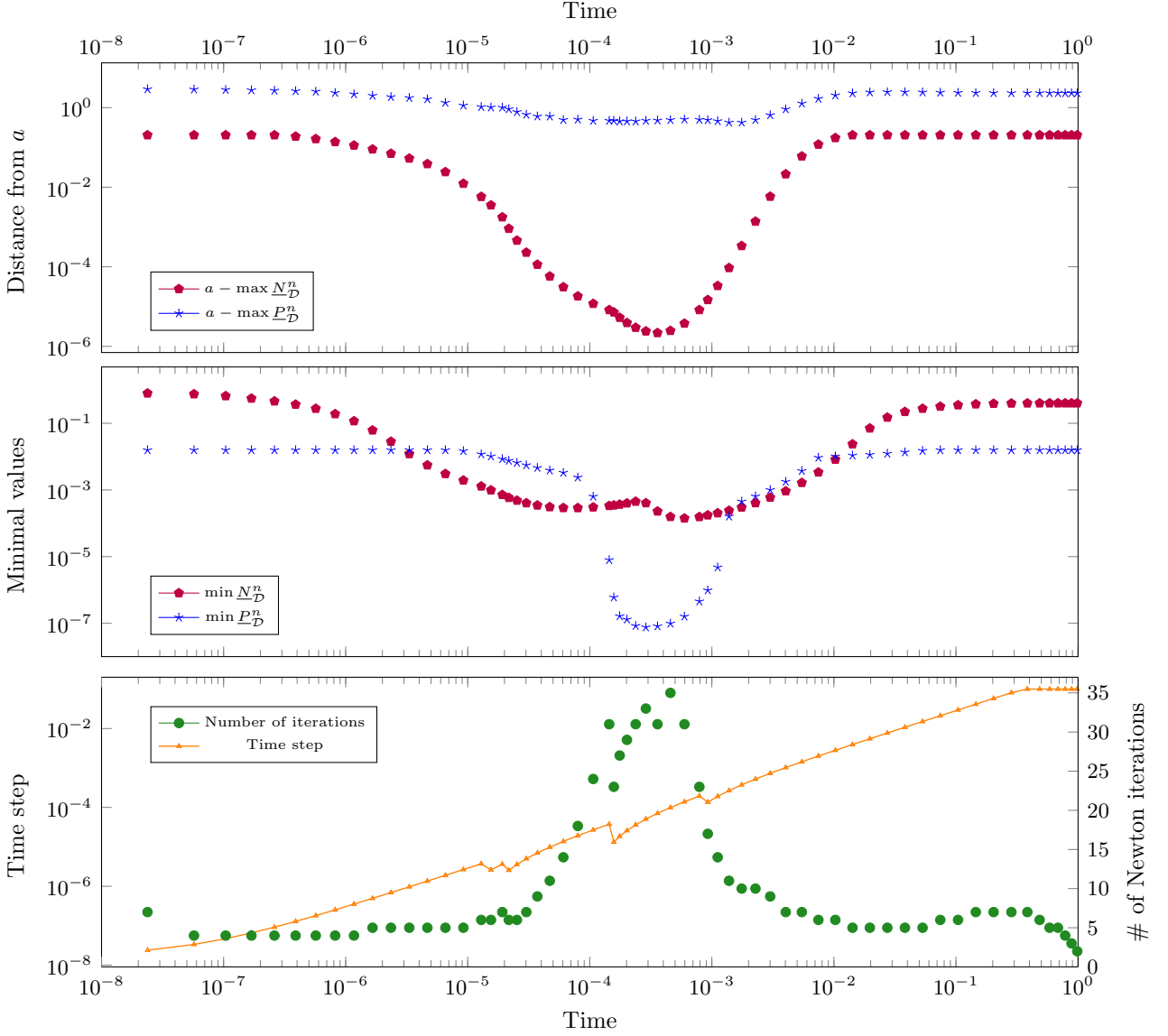
\newline 
To give more quantitative informations, we show in Figure~\ref{fig:bounds} the evolution of the minimal and maximal values, along with the time step and the number of Newton's iterations needed to compute the solutions at a given time. First, note that we display each of the steps (from $t^n$ to $t^{n+1}$) used to compute the solutions on the time interval $[0,1]$, there are 63 of them for this simulation. 
The extremal values account for the cell and edge unknowns: we let
\[
	\min \Nd_\D^n =  \min \left ( \min_{K \in \M} N_K^n , \min_{\s \in \E} N_\s^n \right )
		\text{ and }
		\max \Nd_\D^n =  \max \left ( \max_{K \in \M} N_K^n , \max_{\s \in \E} N_\s^n \right ), 
\]
as well as analogous definitions for the holes density. In order to observe the extremal values on relevant scales, we look at the minimum and the distance between $a$ (the maximal density allowed by the model) and the maximal densities computed, and print them on a log scale.
As expected from Theorem~\ref{th:DD}, the values of the discrete densities stay in $I_h = ]0,\dmax[$. In particular, our scheme is not subject to the same lack of positivity as the scheme of~\cite{GaGa:96} when the magnetic field is intense.
Moreover, it is remarkable to note that the scheme seems very robust: the densities are close to the limit values, at a distance reaching 7.56e-8 in the most difficult situation.
Note that this value mean that in practice, we do not perform the projection step described in Section~\ref{sec:Newton} to initialise the Newton's methods, since the projection threshold $\epsilon = 10^{-9}$ is smaller than 7.56e-8.
Using larger values for $\epsilon$ could perhaps improve the convergence of Newton's methods, but the impact of such a modification has not been investigated here.
As previously announced, one can see that the first effective time step needed to compute $(\Nd^{1}_\D,\Pd^{1}_\D, \phid^{1}_\D)$ is relatively small: $0.1 \times 2^{-22} \approx \text{2.38e-8}$. It means that the time step adaptation procedure needed to perform 22 time step reductions before managing to compute a solution. On the other hand, we can see that after this initial reduction, there are only five others time step reductions (one around 1.5e-5, one at 2e-5, two around 1.5e-4, and a last one at 9e-4). At the end of the simulation, around $t = 0.3$, the time step reach its maximal value $\Delta t = 0.1$ and there is no more time step adaptation.
On the third graph, we show the number of Newton iterations needed to compute the discrete solution from one time to another.
Note that we do not take into account the iterations used in non-convergent Newton's methods (i.e. methods that leads to a time step reduction).
For all the time step reductions of this simulation, the Newton's methods do not converge because at least one of the computed discrete densities was not $I_h$-valued.
On average, the convergent Newton's methods converge in 10.5 iterations.
Finally, we can see on Figure~\ref{fig:bounds} a clear correlation between the extremal values reached by the discrete densities and the number of Newton iterations needed to compute the solution.
This can be explained by at least one fact: since $h$ is singular in $0$ and $a$, the values of its derivative near these limit values blow up. Therefore, the Jacobian matrix used in the Newton's method tends to be ill-conditioned, which induces numerical instabilities.
We can give a last remark on this test-case: the most extreme values observed in Figure~\ref{fig:bounds} are located near to the boundary and appear at the beginning of the simulation (see for example the minimal values on Figure~\ref{subfig:profil:1}, on the left and right sides of the square). Hence, the difficulties are essentially due to the presence of the (strong) magnetic field, which induces rotation of the charges and creates boundary layers. 
%
%
\begin{table}[!ht]
\center
\renewcommand{\arraystretch}{1.05}
\begin{tabularx}{0.75\textwidth}{|X|c|c|c|}
\hline 
Magnetic field intensity & $b=0$ & $b=0.5$ &$b=1$ \\ 
\hline 
$\displaystyle \min  \left \{  \min \Nd_\D^n \mid 0 \leq t^n \leq 1 \right \}$  & 3.20e-1 & 5.36e-3  & 1.41e-4  \\ 
\hline 
$\displaystyle \min  \left \{  \min \Pd_\D^n \mid 0 \leq t^n \leq 1 \right \}$  & 1.56e-2 & 2.53e-3  & 7.56e-8  \\ 
\hline 
$\displaystyle \min  \left \{ a - \max \Nd_\D^n \mid 0 \leq t^n \leq 1 \right \}$ & 2.04e-1 & 4.17e-3 & 2.18e-6 \\ 
\hline 
$\displaystyle \min  \left \{ a - \max \Pd_\D^n \mid 0 \leq t^n \leq 1 \right \}$ & 2.31 & 9.39e-1 &4.23e-2 \\ 
\hline 
Number of steps & 18 & 40 & 63 \\ 
\hline 
Minimal time step & 3.13e-3 & 1.53e-6 & 2.38e-8 \\ 
\hline 
Number of initial time step reductions & 5 & 16 &  22 \\ 
\hline 
Total number of time step reductions & 5 & 16 & 27 \\ 
\hline 
Maximal number of Newton iterations & 7  & 7 & 35 \\ 
\hline 
Average number of Newton iterations & 5.05 & 5.45 & 10.54 \\ 
\hline 
Total cost & 96 & 234 & 692 \\ 
\hline 
\end{tabularx} 
\caption{\textbf{Test-case \thenumtest}. Comparison of the extremal values and costs for different magnetic fields}
\label{tab:comp}
\end{table}

To confirm this statement, we perform simulations with the same parameters (still on the time interval $[0,1]$) except that we consider situations with smaller magnetic field intensities ($b=0$ and $b=0.5$). The results are presented in Table~\ref{tab:comp}. As expected, it seems that the extremal values are strongly related to the intensity of the magnetic field. It follows that the number of Newton iterations, and hence the global computational cost of the simulation increase with the intensity of the magnetic field. We can also notice that the time step reductions occur only at the first time step for the moderate magnetic fields ($b=0$ and $b=0.5$): the computational difficulties lie in the boundary layers appearing at small times.
As previously, note that the ``number of iterations'' mentioned on the table do not take into account the iterations of non-convergent Newton's methods.
In the last line, we give the ``Total cost'' of the simulation, that is to say the number of linear systems solved during the simulation, including the iterations of non-convergent Newton's methods.
Taking into account every iterations performed, the simulation with $b=1$ is basically 7 times more time consuming than the one without magnetic field.

\addtocounter{numtest}{1}
In order to assert the robustness of the method with respect to the mesh, we consider the test case \thenumtest, which is characterised by the same physical parameters as the previous one: $h(s) = \log \left (\frac{s}{1-\gamma s } \right)$, $\gamma = 0.27$), $r(N,P) = \frac{10}{1+N+P}$, $b=1$, $\lambda = 0.05$, $N_0^D = 3.5$, $N_1^D= 1.5$ and $\alpha_0 = 0$.
Contrarily to the previous test, we perform a simulation on a tilted hexagonal mesh, constituted of 4192 cells, with a time step $\Delta t = 0.1$. One has to notice that the spatial mesh used here is a ``general polygonal" one, in the sense that it is not a admissible mesh for the TPFA scheme. Moreover, the geometry of this mesh is not well-suited with respect to the geometry of the device, since the junction of the PN-diode (which corresponds to the discontinuity of the doping profile $C$) crosses some cells.
\begin{figure}[h]
\begin{subfigure}{0.32\textwidth}\caption{$P_\M^n$ with $t^n = 0.00082$} 
  \includegraphics[width=\linewidth]{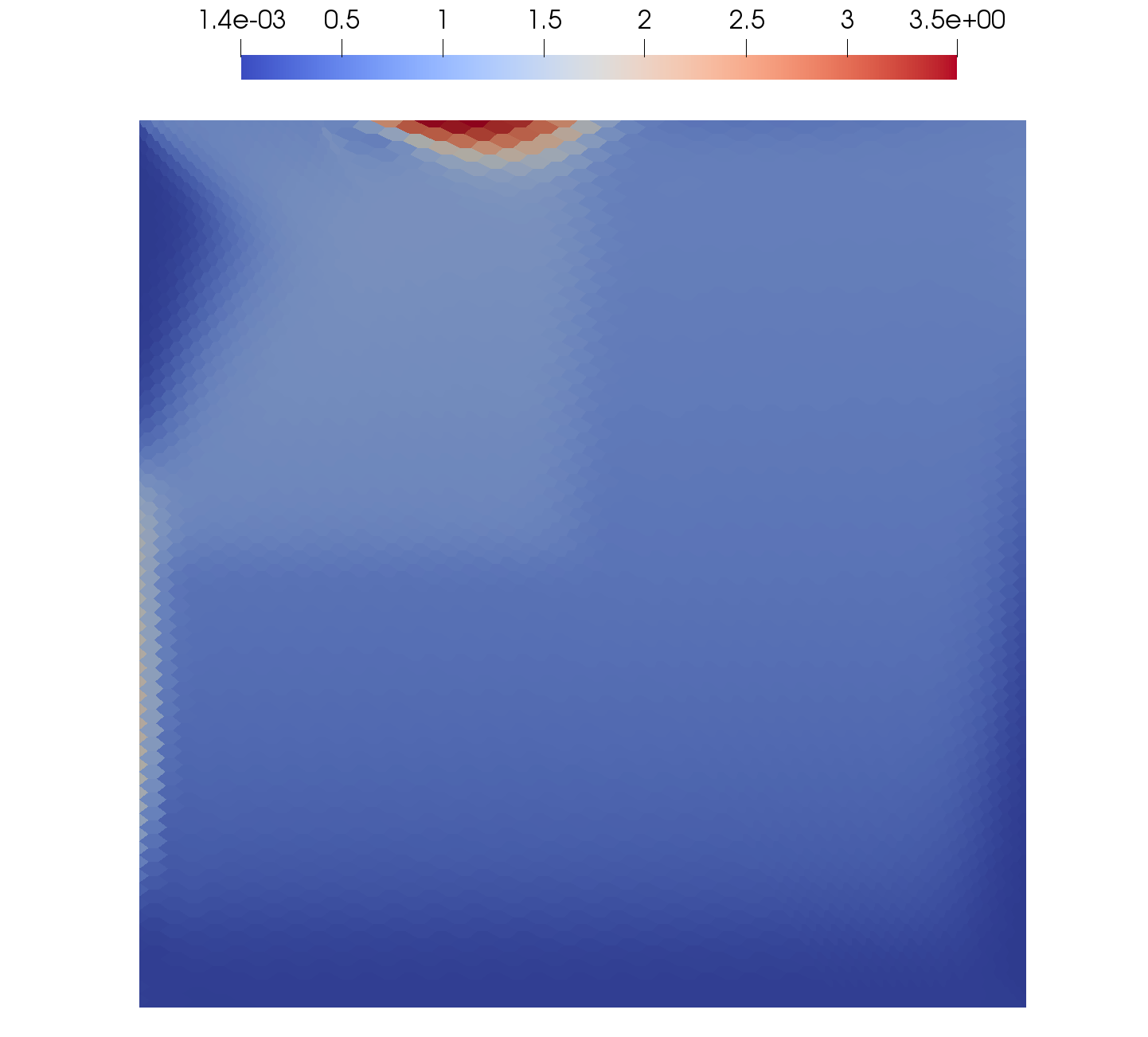}
\end{subfigure}\hfill
\begin{subfigure}{0.32\textwidth}  \caption{$P_\M^n$ with $t^n = 0.0044$} 
  \includegraphics[width=\linewidth]{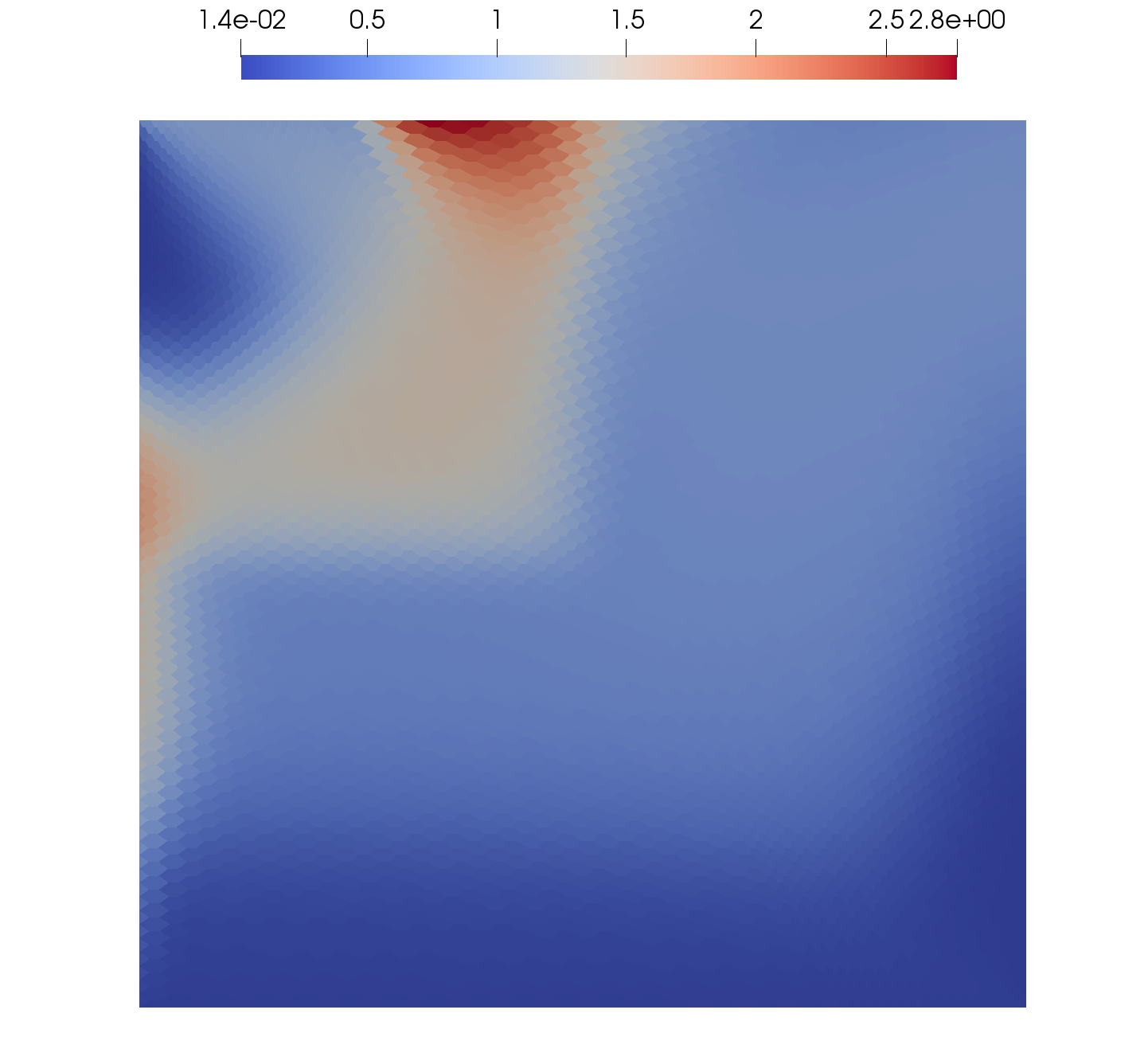}
\end{subfigure}\hfill
\begin{subfigure}{0.32\textwidth}  \caption{$P_\M^n$ with $t^n = 0.0087$} 
  \includegraphics[width=\linewidth]{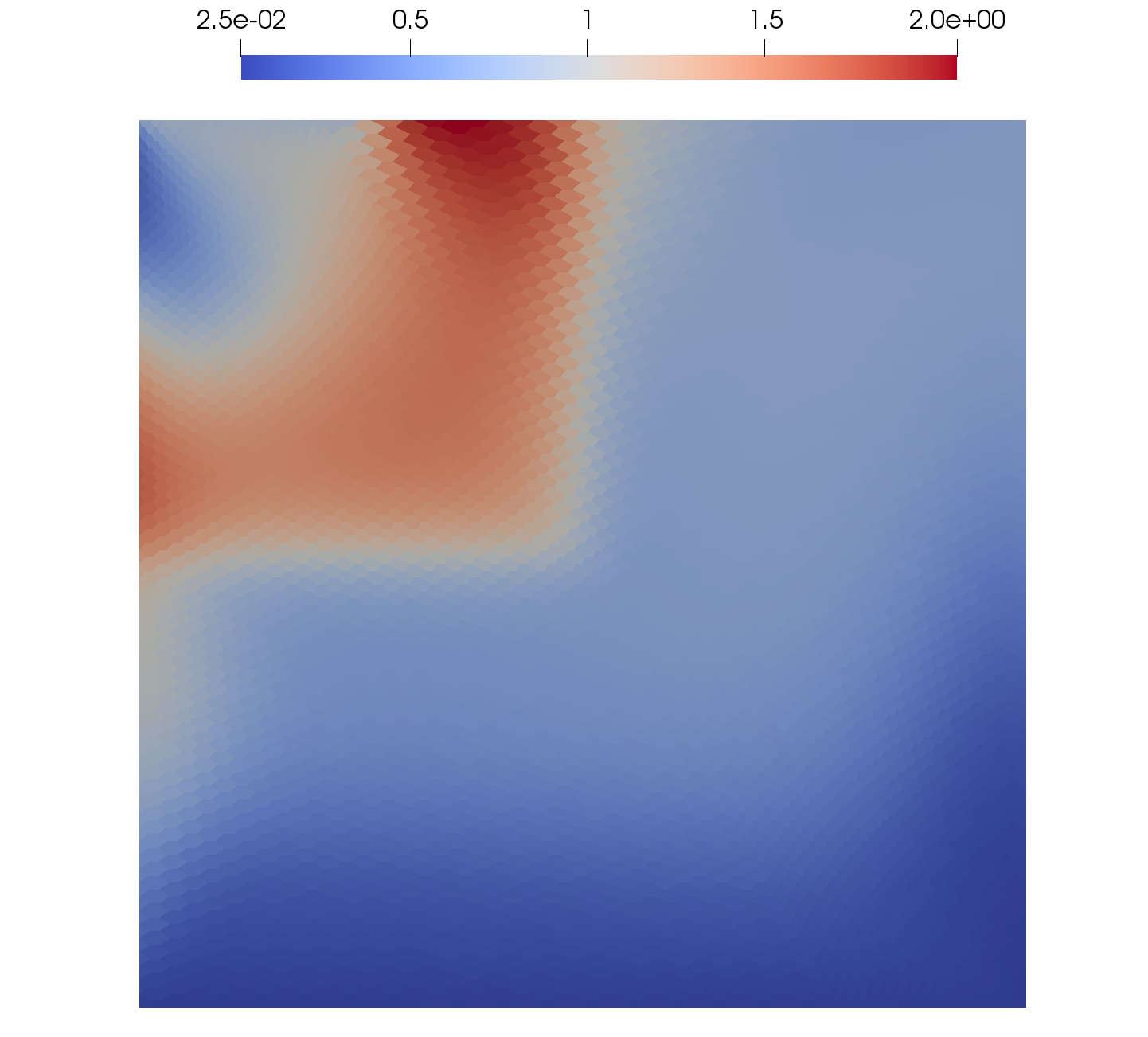}
\end{subfigure}
\caption{\textbf{Test-case \thenumtest}. Evolution of the discrete density of holes, on a tilted hexagonal mesh.}
\label{fig:profile:Blakemore:hexa}
\end{figure}
On Figure \ref{fig:profile:Blakemore:hexa}, we show the profile of the density of holes $P_\M$ computed at different times. These profiles are to be compared with these of the Figure~\ref{fig:profile:Blakemore}: even if the tilted hexagonal does not fit the geometry of the problem perfectly and is much coarser than the triangular mesh used for the Figure~\ref{fig:profile:Blakemore}, the profiles look quite the same.\newline
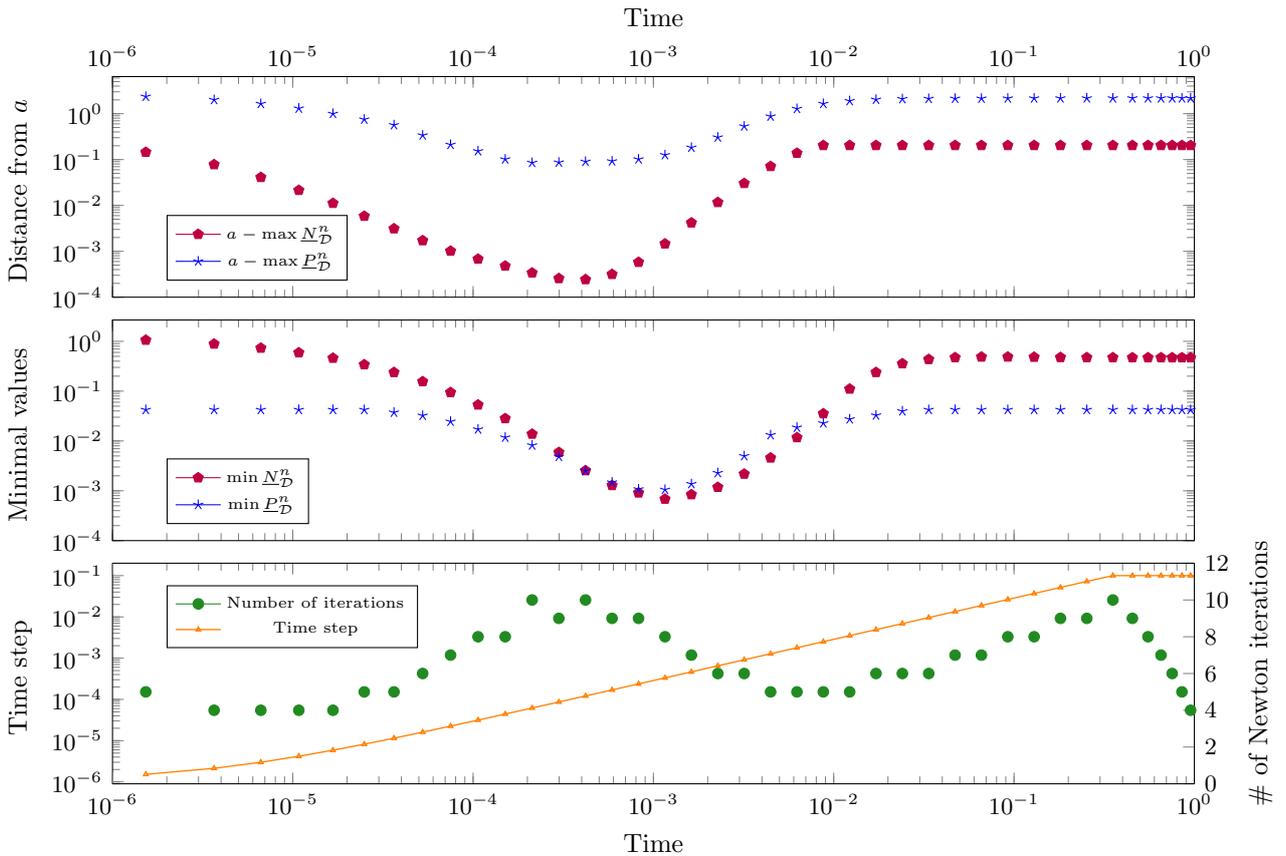
\begin{figure}[!h]
\pgfplotsset{width=0.95\linewidth,height=0.27\linewidth,compat=newest}
\begin{minipage}[c]{1\linewidth}
\raggedright
	\begin{tikzpicture}[scale= 0.99] 
        \begin{loglogaxis}[
        		xmin=1.0e-6,
        		xmax=1,
        		ymin= 1.0e-4,        		
        		max space between ticks=25,
            	legend style = { 
              	at={(0.05,0.075)},
              	anchor = south west,
              	tick label style={font=\footnotesize},
              	legend columns= 1
            			},
            	ylabel=\small{Distance from $a$ \vphantom{Time step Minimal values}},
            	xlabel=\small{Time},
            	xticklabel pos=top,
          ]
          \addplot[mark=pentagon*, draw=none, purple]	table[x=Temps,y=Diff_max_N] {tps_full_hexa};
          \addplot[mark=star, draw=none, blue]	table[x=Temps,y=Diff_max_P] {tps_full_hexa};
          \legend{\tiny $a - \max \Nd_\D^n $,\tiny $a - \max \Pd_\D^n$, } 
	      \end{loglogaxis}
	\end{tikzpicture} 
\vspace{-12pt}
      \begin{tikzpicture}[scale= 0.99]
        \begin{loglogaxis}[
        		xmin=1.0e-6,
        		xmax=1,        		
        		ymin= 1.0e-4,
        		max space between ticks=25,
            	legend style = { 
             	at={(0.05,0.075)},
              	anchor = south west,
              	tick label style={font=\footnotesize},
              	legend columns=1
            			},
            	ylabel=\small{Minimal values \vphantom{Time step Distance from $a$}},
            	xticklabels= \empty, 
          	]
          \addplot[mark=pentagon*, draw=none, purple] table[x=Temps,y=min_N] {tps_full_hexa};
          \addplot[mark=star, draw=none, blue]	table[x=Temps,y=min_P] {tps_full_hexa};
          \legend{\tiny $\min \Nd_\D^n$,\tiny $\min \Pd_\D^n$ } 
	      \end{loglogaxis}
      \end{tikzpicture} 
      \begin{tikzpicture}[scale= 0.99]  
		\pgfplotsset{
    			xmin=1.0e-6,
        		xmax=1,
        		legend style = { 
              	at={(0.05,0.9)},
              	anchor = north west,
              	tick label style={font=\footnotesize},
              	legend columns=1
            			}
			}
      	\begin{loglogaxis}[
        		ymax=0.2,
        		ymin=9e-7,
        		max space between ticks= 20,
        		axis y line*=left,
			axis x line=none,
            	ylabel=\small{Time step \vphantom{Distance from $a$ Minimal values}},
          	]
          	\addplot[mark=triangle, mark size=1.,  line width=0.5pt, draw=orange] table[x=Temps,y=time_step] {tps_full_hexa};
          	\label{stephexa}
	   	\end{loglogaxis}
        	\begin{semilogxaxis}[
        		ymin = 0.,
        		axis y line*=right,
        		ymax=12,
        		max space between ticks= 20,
            	ylabel=\small{\#  of Newton iterations},
            	xlabel=\small{Time}
          	]
          	
          	\addplot[mark=*, draw=none, forestgreen] table[x=Temps,y=Nb_iter] {tps_full_hexa};
          	\addlegendimage{/pgfplots/refstyle=step};
          	\addlegendentry{\tiny Number of iterations};
          	\addlegendentry{\tiny Time step}        	
	      	\end{semilogxaxis}
      \end{tikzpicture} 
\end{minipage}
	\caption{\textbf{Test-case \thenumtest} ($b=1$, tilted hexagonal mesh). Evolution of the discrete extremal values, time step and cost.}
	\label{fig:bounds:hexa}
\end{figure}
We give quantitative information for this test case on Figure~\ref{fig:bounds:hexa}, which is the counterpart of Figure~\ref{fig:bounds}: we show the evolution of the bounds of the discrete densities, as well as the time step and the number of Newton’s iterations used for a given time step.
First, note that the first admissible time step ($\delta t \approx 10^{-6}$) is bigger than the one on the refined triangular mesh. Moreover, the extremal values are much farther from the bounds of $I_h$ than on the triangular mesh.
This behaviour can be explained by the fineness of the triangular mesh, which enable the scheme to capture in a very accurate way the boundary layers: the continuous solution take very small/big values near to the boundary of the domain, but if one average these values on relatively big cells (such as the  cells of the tilted hexagonal mesh), then the mean values are not so extreme.  
We can also remark that the number of iterations needed to compute one time step peaks around $t \approx 3.10^{-4}$, as in Figure~\ref{fig:bounds}: it corresponds to the time when the densities are closest to the bounds of $I_h$. Such fact enforces the hypothesis formulated before: the values of the densities have an important impact on the Jacobian entries and hence on the convergence of the Newton's method.

Overall, the behaviour of the scheme does not depend on the geometry of the mesh, and the fact that the PN-junction crosses some cells does not have a noticeable impact.

\subsection{Long-time behaviour of the discrete solutions} \label{sec:num:longtime}\addtocounter{numtest}{1}
We are now interested in the long-time behaviour of the solutions computed with the scheme.
For the test-case~\thenumtest, we consider a test-case from~\cite{BCCH:17} with Boltzmann statistics ($h = \log$), no recombination ($r=0$), no magnetic field ($b=0$), $\lambda = 1$ and boundary values $N^D_0 = \e$, $N^D_1 = 1$ and $\alpha_0 = 1$.
We are interested in the the evolution of the discrete relative entropy and the $L^2$ distance to the equilibrium (namely, $\sqrt{\|N^n_\M -N^e_\M  \|_{L^2(\Omega)}^2 + \| P^n_\M -P^e_\M\|_{L^2(\Omega)}^2 + \| \phi^n_\M -\phi^e_\M\|_{L^2(\Omega)}^2 }$) with respect to time.

In Figure~\ref{fig:tps:varsize}, we show the evolution of these quantities for simulations performed on the family of triangular meshes (the coarsest mesh has 56 cells, and a size $h_0$), with $\Delta t = 0.01$.
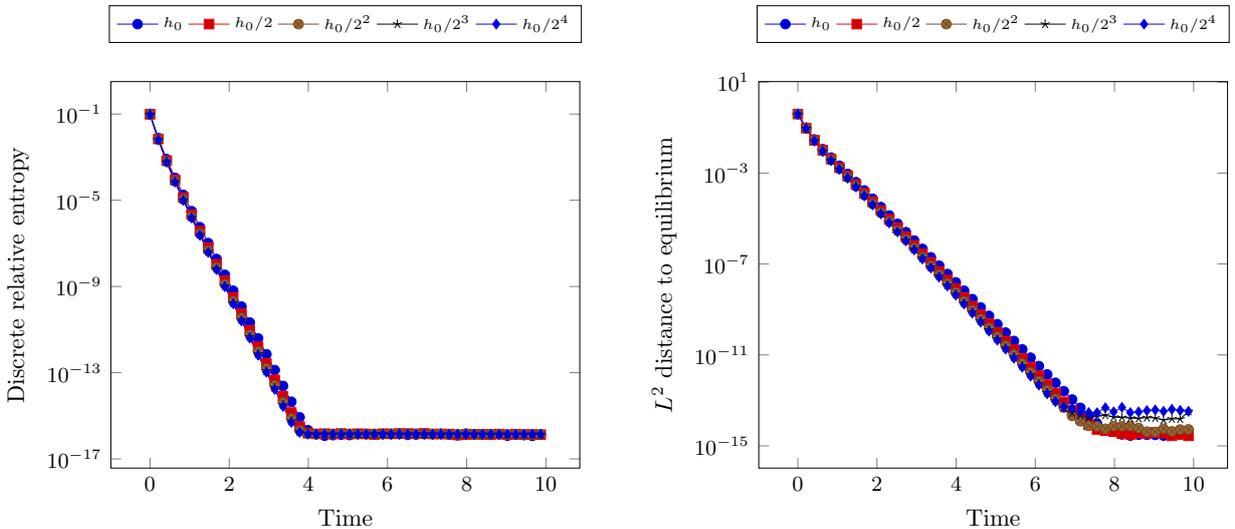
\begin{figure}[ht]
\begin{minipage}[c]{0.5\linewidth}
\begin{tikzpicture}[scale= 0.9]
        \begin{semilogyaxis}[
            	 legend style = { 
              at={(0.5,1.1)},
              anchor = south,
              tick label style={font=\footnotesize},
              legend columns=-1
            },ylabel=\small{Discrete relative entropy},xlabel=\small{Time}
          ]
          \addplot table[x=Temps,y=Entro] {graph_data/mesh_var(dt=0.01)/mesh1_1/tps_impli};
          \addplot table[x=Temps,y=Entro] {graph_data/mesh_var(dt=0.01)/mesh1_2/tps_impli};
          \addplot table[x=Temps,y=Entro] {graph_data/mesh_var(dt=0.01)/mesh1_3/tps_impli};
          \addplot table[x=Temps,y=Entro] {graph_data/mesh_var(dt=0.01)/mesh1_4/tps_impli};
          \addplot table[x=Temps,y=Entro] {graph_data/mesh_var(dt=0.01)/mesh1_5/tps_impli};
          \legend{\tiny $h_0$, \tiny $h_0/2$ ,\tiny $h_0/2^2$,  \tiny $h_0/2^3$, \tiny $h_0/2^4$};
	      \end{semilogyaxis}
      \end{tikzpicture}    
\end{minipage}
\begin{minipage}[c]{.5\linewidth}
\begin{tikzpicture}[scale= 0.9]
        \begin{semilogyaxis}[
            legend style = { 
              at={(0.5,1.1)},
              anchor = south,
              tick label style={font=\footnotesize},
              legend columns=-1
            },ylabel=\small{$L^2$ distance to equilibrium},xlabel=\small{Time}
          ] 
          \addplot table[x=Temps,y expr=sqrt(\thisrow{Diff_eq_L2})] {graph_data/mesh_var(dt=0.01)/mesh1_1/tps_impli};
          \addplot table[x=Temps,y expr=sqrt(\thisrow{Diff_eq_L2})] {graph_data/mesh_var(dt=0.01)/mesh1_2/tps_impli};
          \addplot table[x=Temps,y expr=sqrt(\thisrow{Diff_eq_L2})] {graph_data/mesh_var(dt=0.01)/mesh1_3/tps_impli};
          \addplot table[x=Temps,y expr=sqrt(\thisrow{Diff_eq_L2})] {graph_data/mesh_var(dt=0.01)/mesh1_4/tps_impli};
          \addplot table[x=Temps,y expr=sqrt(\thisrow{Diff_eq_L2})] {graph_data/mesh_var(dt=0.01)/mesh1_5/tps_impli};					  \legend{\tiny $h_0$, \tiny $h_0/2$ ,\tiny $h_0/2^2$,  \tiny $h_0/2^3$, \tiny $h_0/2^4$};
        \end{semilogyaxis}
      \end{tikzpicture}
\end{minipage}
\caption{\textbf{Test-case \thenumtest}. Influence of the meshsize: entropy and $L^2$ distance to equilibrium}
\label{fig:tps:varsize}
\end{figure}
In Figure~\ref{fig:tps:varmesh}, we show the evolution of the discrete relative entropy and the $L^2$ distance to the equilibrium (namely, $\sqrt{\|N^n_\M -N^e_\M  \|_{L^2(\Omega)}^2 + \| P^n_\M -P^e_\M\|_{L^2(\Omega)}^2 + \| \phi^n_\M -\phi^e_\M\|_{L^2(\Omega)}^2 }$) with respect to time. First, one can note that the evolutions are  exponentially fast, as expected from Theorem~\ref{th:longtime}. One can also see a saturation phenomenon when the machine precision is reached. Moreover, as announced in Remark~\ref{rem:nonuniexp}, the decay rate is not strongly impacted by the meshsize. Last but not least, the quantitative values of the decay rate are in accordance with those obtained in~\cite[Figure 2]{BCCH:17} with a \SG TPFA scheme.

\addtocounter{numtest}{1}
In Figure~\ref{fig:tps:varmesh}, we show the evolution of the discrete relative entropy and the $L^2$ distance to the equilibrium for simulations computed with $\Delta t = 0.01$ on meshes with different geometry: a Cartesian one (64 cells, $h_\D = 3.12 \ 10 ^{-2}$), a triangular one (56 cells, $h_\D = 3.07 \ 10 ^{-2}$) and a tilted hexagonal one (76 cells, $h_\D = 3.42\ 10^{-2}$). Note that the meshes under consideration have a similar meshsize.
As for the previous results, the expected exponential decay of entropy is observed. 
The decay rates computed are almost the same for the three mesh geometries, which reinforces the previous observations: in practice, the decay rate is not impacted by the mesh used for the simulation (either by its geometry or its size).
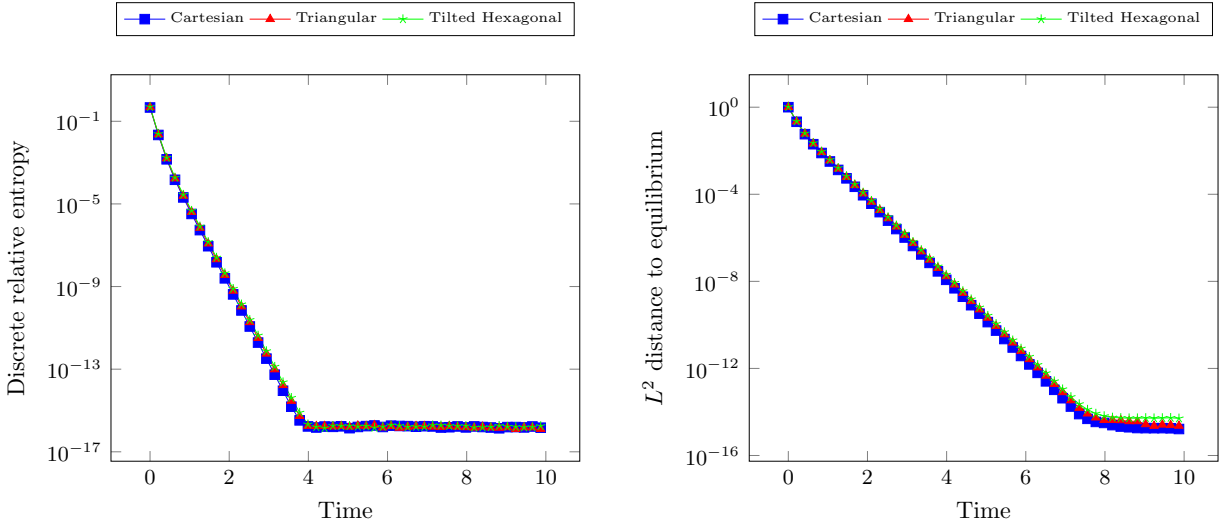
\begin{figure}[ht]
\begin{minipage}[c]{0.49\linewidth}
\begin{tikzpicture}[scale= 0.9]
        \begin{semilogyaxis}[
            	 legend style = { 
              at={(0.5,1.1)},
              anchor = south,
              tick label style={font=\footnotesize},
              legend columns=-1
            },ylabel=\small{Discrete relative entropy},xlabel=\small{Time}
          ]
          \addplot[mark=square*, color=blue] table[x=Temps,y=Entro] {graph_data/mesh_type/mesh2_2/tps_impli};          
          \addplot[mark=triangle*, color=red] table[x=Temps,y=Entro] {graph_data/mesh_type/mesh1_1/tps_impli};
          \addplot[mark=star, color=green] table[x=Temps,y=Entro] {graph_data/mesh_type/pi6_tiltedhexagonal_2/tps_impli};
          \legend{ \tiny Cartesian ,\tiny Triangular, \tiny Tilted Hexagonal };
	      \end{semilogyaxis}
      \end{tikzpicture}    
\end{minipage}
\begin{minipage}[c]{.49\linewidth}
\begin{tikzpicture}[scale= 0.9]
        \begin{semilogyaxis}[
            legend style = { 
              at={(0.5,1.1)},
              anchor = south,
              tick label style={font=\footnotesize},
              legend columns=-1
            },ylabel=\small{$L^2$ distance to equilibrium},xlabel=\small{Time}
          ]           
          \addplot[mark=square*, color=blue] table[x=Temps,y expr=sqrt(\thisrow{Diff_eq_L2})] {graph_data/mesh_type/mesh2_2/tps_impli};
          \addplot[mark=triangle*, color=red] table[x=Temps,y expr=sqrt(\thisrow{Diff_eq_L2})] {graph_data/mesh_type/mesh1_1/tps_impli};
          \addplot[mark=star, color=green] table[x=Temps,y expr=sqrt(\thisrow{Diff_eq_L2})] {graph_data/mesh_type/pi6_tiltedhexagonal_2/tps_impli};
          \legend{ \tiny Cartesian ,\tiny Triangular, \tiny Tilted Hexagonal };
        \end{semilogyaxis}
      \end{tikzpicture}
\end{minipage}
\caption{\textbf{Test-case \thenumtest}. Influence of the mesh geometry on the long-time behaviour: entropy and $L^2$ distance to equilibrium.}
\label{fig:tps:varmesh}
\end{figure}
It is also remarkable to note that on the tilted hexagonal mesh, which is not adapted to the geometry of the semiconductor device, the long-time behaviour is essentially similar to the one on meshes with adapted geometry. Again, this indicates the robustness of the scheme with respect to the mesh used.

\addtocounter{numtest}{1}
Last, we investigate the influence of the magnetic field over the long-time behaviour.
\begin{figure}[ht]
\begin{subfigure}{0.49\linewidth}
\center
\begin{tikzpicture}[scale= 0.9,baseline]
        \begin{semilogyaxis}[
            legend style = { 
              at={(0.5,1.1)},
              anchor = south,
              tick label style={font=\footnotesize},
              legend columns=-1
            },ylabel=\small{Relative entropy},xlabel=\small{Time}
          ]
          \addplot table[x=Temps,y=Entro] {graph_data/Blakemore/b=0/tps_impli};
          \addplot table[x=Temps,y=Entro] {graph_data/Blakemore/b=1/tps_impli};
          \addplot table[x=Temps,y=Entro] {graph_data/Blakemore/b=2/tps_impli};
          \addplot table[x=Temps,y=Entro] {graph_data/Blakemore/b=3/tps_impli}; 
          \addplot table[x=Temps,y=Entro] {graph_data/Blakemore/b=5/tps_impli}; 
          \legend{\tiny $b = 0$,\tiny $b = 1$, \tiny $b = 2$, \tiny $b = 3$, \tiny $b = 5$} 
	      \end{semilogyaxis}
      \end{tikzpicture} 
\caption{Evolution of the relative entropy}
\label{fig:tps:varb:entro}
\end{subfigure}
\begin{subfigure}{.49\linewidth}
\center
\begin{tikzpicture}[scale= 0.9,baseline]
        \begin{semilogxaxis}[
            legend style = { 
              at={(0.5,1.1)},
              anchor = south,
              tick label style={font=\footnotesize},
              legend columns=-1
            },ylabel=\small{Decay rate},xlabel=\small{$b$}
          ] 
          \addplot[mark=pentagon*, draw=none] table[x=b,y=decay_rate] {graph_data/Blakemore/b_ev/rate};
          \addplot[mark=none, draw=red, line width=0.5pt] table[x=b,y=ref] {graph_data/Blakemore/b_ev/ref};
  		  \legend{\tiny Computed decay rate, \tiny $\nu_0 /{\sqrt{1+b^2}}$ }     
        \end{semilogxaxis}
      \end{tikzpicture}
\caption{Evolution of the decay rate with respect to $b$}
\label{fig:tps:varb:rate}
\end{subfigure}   
\caption{\textbf{Test-case \thenumtest}. Long-time behaviour for the Blakemore statistics and influence of the magnetic field}
\label{fig:tps:varb}
\end{figure}
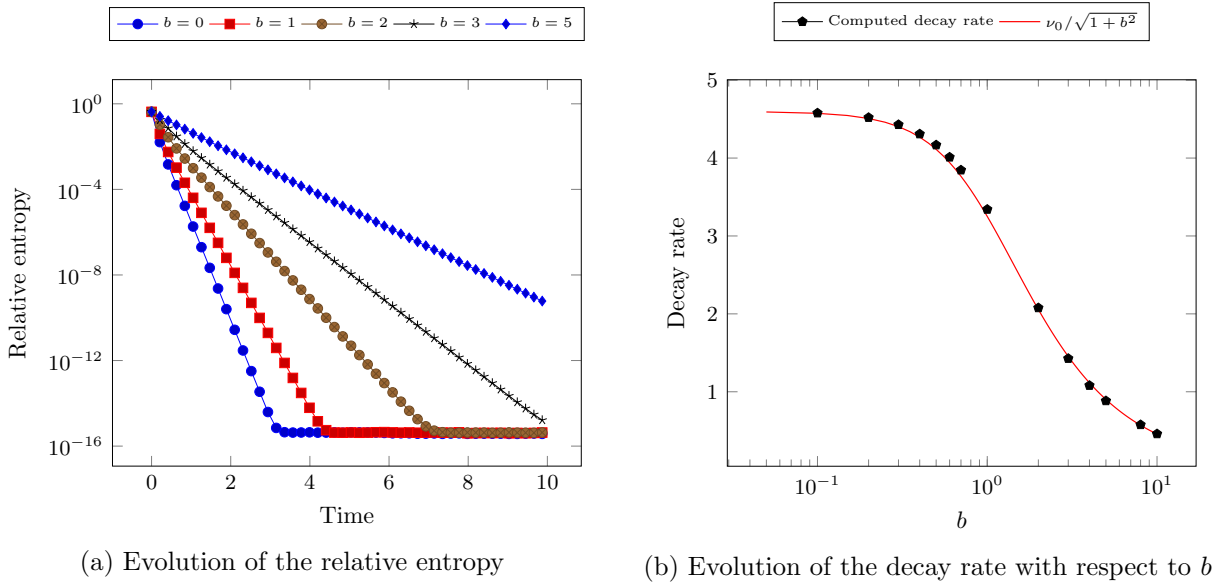
We consider the test-case~\thenumtest, with Blakemore statistics ($h(s) = \log \left (\frac{s}{1-\gamma s } \right)$), no recombination ($r=0$), $\lambda = 1$ and boundary values $N^D_0 = \e$, $N^D_1 = 1$ and $\alpha_0 = 1$. 
We perform our simulations on a Cartesian mesh, with a time step $\Delta t = 0.01$, with different values of $b$. The results are presented in Figure~\ref{fig:tps:varb:entro}. One can see the relative entropy decreases exponentially fast in time. Moreover, the presence of a magnetic field tends to attenuate the dissipative effects, and to slow down the evolution. This was expected from a physical point of view, since the magnetic field induces a rotation of the charge carriers which delays the natural relaxation towards equilibrium.
On Figure~\ref{fig:tps:varb:rate}, we show the evolution of the entropy decay rate with respect to the intensity of the magnetic field $b$. We plot the values of the decay rate for different values of $b$, as well as a reference rate $\nu(b) = \frac{\nu_0}{\sqrt{1+b^2}}$, where $\nu_0$ is the numerical value of the decay rate computed with no magnetic field ($b=0$). The computed values seem to fit very well the reference rate. Note that the decay rate seems therefore to be proportional to $\frac{1}{\sqrt{1+b^2}}$, which happens to be the modulus of the eigenvalues of the diffusion tensors $\Lambda_N$ and $\Lambda_P$.

\section{Conclusion} \label{con}
In this article, we have designed and analysed a scheme for general anisotropic drift-diffusion systems on general polytopal meshes. The scheme is based on the Hybrid Finite Volume method. We have proved that the scheme has a discrete entropic structure, and used it to show the existence of solutions (with $I_h$-valued densities). As a by-product of this structure, we have also proved that the solutions to the scheme converge exponentially fast in time towards the associated discrete thermal equilibrium.
The results are established for general statistics functions $h$, general diffusion tensors (potentially anisotropic and nonsymmetric) and general recombination terms, under the hypothesis that the boundary data are compatible with the thermal equilibrium (conditions~\eqref{eq:compcond} and~\eqref{eq:alpha}). 
Finally, we have validated our scheme on different numerical tests, highlighting the $I_h$-valuation of the discrete densities, the ability to withstand intense magnetic fields and the long-time behaviour. 
Numerical experiments (not presented in this paper) suggest that the scheme introduced here also works in situations where the compatibility condition with the thermal equilibrium does not hold. Hence, a future direction would be to analyse the scheme in such a situation.
\section*{Acknowledgements}      
The author would like to thank the anonymous reviewers for their remarks and suggestions which helped
improving the quality of this article, as well as Claire Chainais-Hillairet, Maxime Herda and Simon Lemaire for fruitful discussions and comments on this manuscript.
The author acknowledges support by the Labex CEMPI (ANR-11-LABX-0007-01).
\appendix
\section{Discrete boundedness by entropy and dissipation} \label{ap:lemma}
In this appendix, we prove Lemma~\ref{lem:bounds}. To do so, we need a technical result stated below.
\begin{lemma} \label{lem:propa}
Let $E :  (x,y) \mapsto \left (G(x) - G(y) \right ) (x-y)$, and $(x,y) \in \R^2$. \newline 
If there exist two positive constants $M$ and $C_1$ such that 
$\displaystyle
	| y| \leq M \text{ and } 0 \leq E(x, y) \leq C_1,
$
then, there exists a constant $C_{M,C_1}$ only depending on $M$ and $C_1$ such that $|x| \leq C_{M,C_1}$.
\end{lemma}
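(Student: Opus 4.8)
The plan is to use that $G'=g>0$, so $G$ is strictly increasing and (being its primitive from $-\infty$) positive, with $G(x)\to 0^+$ as $x\to-\infty$ and $G(x)\to+\infty$ as $x\to+\infty$. In particular $G(x)-G(y)$ always has the same sign as $x-y$, which is why $E\ge 0$ holds unconditionally; the real content of the lemma is that the upper bound $E(x,y)\le C_1$, together with $|y|\le M$, prevents $x$ from escaping to $\pm\infty$. I would dispose of the band $|x|\le M$ for free and treat the two tails $x\ge M$ and $x\le -M$ separately.

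For the upper tail, take $x\ge M\ge y$. Both factors of $E$ are then nonnegative, and since $G$ is increasing,
\[
E(x,y)\ge \big(G(x)-G(M)\big)(x-M).
\]
Writing $G(x)-G(M)=\int_M^x g$ and using that $g$ is increasing gives $G(x)-G(M)\ge g(M)(x-M)$, hence $E(x,y)\ge g(M)(x-M)^2$. As $g(M)>0$ depends only on $M$, the hypothesis $E\le C_1$ forces $x\le M+\sqrt{C_1/g(M)}$.

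The lower tail is the delicate step, and where the assumption $g'\le g_0 g$ comes into play. For $x\le -M\le y$ one again gets $E(x,y)\ge \big(G(-M)-G(x)\big)(-M-x)$, but now the monotone factor $G(-M)-G(x)$ does \emph{not} grow: since $G(x)\to 0^+$, it stays below $G(-M)$, so all the growth must be extracted from the linear factor $-M-x$. To quantify this I would integrate $(\log g)'=g'/g\le g_0$ from $s$ to $-M$ to obtain $g(s)\ge g(-M)\,\e^{g_0(s+M)}$ for $s\le -M$, and then
\[
G(-M)-G(x)=\int_x^{-M} g\ge \frac{g(-M)}{g_0}\Big(1-\e^{g_0(x+M)}\Big).
\]
Setting $t=-M-x\ge 0$ this yields $E(x,y)\ge \Phi(t):=\tfrac{g(-M)}{g_0}\big(1-\e^{-g_0 t}\big)\,t$, and $\Phi$ is continuous, strictly increasing, with $\Phi(0)=0$ and $\Phi(t)\to+\infty$. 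Thus $E\le C_1$ gives $t\le \Phi^{-1}(C_1)$, i.e. $x\ge -M-\Phi^{-1}(C_1)$.

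Combining the three regimes, $|x|$ is bounded by a constant assembled from $M$, $\sqrt{C_1/g(M)}$ and $\Phi^{-1}(C_1)$, all depending only on $M$ and $C_1$ (with $g$, $G$, $g_0$ fixed data). The one genuinely nontrivial point is the lower tail: unlike the upper one, $E$ cannot be minorised by a coercive function of $x$ through the $G$-increment, since that increment remains bounded as $x\to-\infty$; the growth condition $g'\le g_0 g$ is exactly what lets me control how slowly $g$---and hence $G(-M)-G(x)$---decays, so that the linear factor still produces a quantitative, invertible lower bound on $E$.
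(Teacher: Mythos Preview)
Your proof is correct, but it takes a different and unnecessarily heavy route on the lower tail. The paper treats both tails symmetrically and only uses that $G$ is strictly increasing: writing $\delta=x-y$, if $\delta\le -2M-1$ then $y+\delta\le -M-1$ and $y\ge -M$, so $G(y)-G(y+\delta)\ge G(-M)-G(-M-1)>0$, a positive constant depending only on $M$; this already gives $E_y(\delta)\ge \big(G(-M)-G(-M-1)\big)|\delta|$ and hence a bound on $|\delta|$. In other words, once $x$ is below $-M-1$ you can freeze the $G$-increment at the constant $G(-M)-G(-M-1)$ and let the linear factor do all the work, with no need to quantify how $G(-M)-G(x)$ behaves as $x\to-\infty$. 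Your argument instead invokes the growth hypothesis $g'\le g_0 g$ to produce the exponential lower bound $G(-M)-G(x)\ge \tfrac{g(-M)}{g_0}(1-\e^{-g_0 t})$; this is valid but superfluous for the lemma, which holds for any strictly increasing $G$. So the content you flagged as ``the one genuinely nontrivial point'' is in fact trivial, and the paper's version makes the lemma independent of the constant $g_0$.
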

\begin{proof}
First, we define $E_ y : \delta \mapsto \left( G(y+\delta) - G( y) \right ) \delta$, therefore letting $\delta = x- y$, one has 
$\displaystyle
E(x, y) = E_ y(\delta) \leq C_1.
$
The inequality $|x| \leq M + |\delta|$ holds, so it suffices to get a bound on $\delta$ to conclude:
\begin{itemize}
	\item[(i)] if $\delta \geq 2M+1>0$, then since $G$ is increasing, one has  
\[
	G(y+\delta) \geq G(y + 2M +1)\geq G(-M + 2M +1)= G(M+1) \text{ and } G(y) \leq G(M), 
\]
so $C_1 \geq E_y(\delta) \geq \left (G(M+1) - G(M)  \right ) \delta$, and we get $ 2M+1 \leq \delta \leq C_1 \left ( G(M+1) - G(M) \right )  ^{-1}$; 
	\item[(ii)] if $\delta\leq-2M-1$, a similar computation shows that $ -2M-1 \geq \delta \geq C_1 \left ( G(-M-1) - G(-M) \right )  ^{-1}$.
\end{itemize}
Hence, one has 
$\displaystyle
	|\delta| \leq \max \left ( 2M+1, \frac{C_1}{G(M+1)-G(M)}, \frac{C_1}{G(-M)-G(-M-1)} \right )
$, which concludes the proof.
\end{proof}
We can now prove Lemma~\ref{lem:bounds}. The proof is similar to \cite[Lemma 2]{CHHLM:22}, except for the use of the unknowns on $\E_{ext}^D$.
\lem*
\begin{proof}
The proof is divided into two steps. First, we prove bounds on the discrete electrostatic potential $\phid_\D$ thanks to the bound on the entropy. Then, we use these bounds to estimate $(\w_\D^N , \w_\D^P)$.

Since $\N(\w_\D^N, \w_\D^P)  \leq B_\sharp$, by~\eqref{def:globalcoercivity} one has
$\displaystyle 
	\frac{\alpha_\flat \lambda_\flat ^\phi}{2} |\phid_\D - \phid_\D^e|_{1,\D}^2 
		\leq \frac{1}{2}a_\D^\phi(\phid_\D - \phid_\D^e, \phid_\D - \phid_\D^e) \leq B_\sharp$.
Therefore, letting $c_1 = |\phid_\D^e |_{1,\D} + \sqrt{\frac{2 B_\sharp}{\alpha_\flat \lambda_\flat ^\phi}}$, we get that $|\phid_\D|_{1,\D} \leq  c_1$. By definition of $|.|_{1,\D}$, we deduce that, for any $K \in \M$ and any $\s \in \E_K$, 
$\frac{|\s|}{d_{K ,\s}} (\phi_K - \phi_\s)^2 \leq c_1^2$. Letting $c_2 = \displaystyle\max_{K\in \M, \s \in \E_K}{c_1 \sqrt{\frac{d_{K,\s}}{|\s|}}}$, one has 
\[
	\forall K \in \M, \forall \s \in \E_K, \, |\phi_K - \phi_\s | \leq c_2.
\]
On the other hand, there exists $\s_0 \in \E^D_{ext}$ such that $\phi_{\s_0} = \phi^D_{\s_0}$ and so, by \eqref{eq:boundlifting}, we have
\[
	| \phi_{\s_0}|  \leq \| \phi^D \|_{L^\infty(\Omega)}.
\]
Now, one can use the connectivity of the mesh: for any cell $K$ (\emph{resp.} face $\s$) there is a finite sequence of components of $\phid_\D$, denoted $(x_k)_{0 \leq k \leq l}$, starting at $x_0 = \phi_{\s_0}$ and finishing at $x_l = \phi_K$ (\emph{resp.} $x_l = \phi_\s$) such that, for any $k$ in $\lbrace 0, \dots l-1 \rbrace$, one has  $|x_{k+1} - x_{k}| \leq c_2$. Therefore, one concludes that 
\[
	|x_l| \leq l  c_2 + |x_0| \leq 2 |\M| c_2 + \| \phi^D \|_{L^\infty(\Omega)}.
\]
Thus there exists $\phi_\sharp$ positive depending on the data and $\D$ such that 
\[
	-\phi_\sharp \one_\D\leq \phid_\D  \leq  \phi_\sharp \one_\D.
\]

Now, note that since $\Diss(\w_\D^N, \w_\D^P)  \leq B_\sharp$, one has $T_\D^N (\Nd_\D, \w_\D^N, \w_\D^N) \leq B_\sharp$ and $ T_\D^P (\Pd_\D, \w_\D^P, \w_\D^P) \leq B_\sharp$, 
where we recall that the discrete densities are defined by $\Nd_\D= g(\w_\D^N + \phid_\D + \alpha_N \one_\D) $ and
$\Pd_\D= g(\w_\D^P - \phid_\D + \alpha_P \one_\D) $.
For $K\in\M$, using~\eqref{def:TK} alongside with the positivity of $r_K(\Nd_K)$ and the local coercivity~\eqref{def:localcoercivity} of $a^N_K$, one gets that 
\begin{equation} \label{eq:bounddiss}
	B_\sharp \geq r_K(\Nd_K) a_K^N(\w_\D^N, \w_\D^N) 
	\geq \alpha_\flat \lambda_\flat  \sum_{\s \in \E_K} r_K(\Nd_K) \frac{|\s|}{d_{K,\s}}(w_K^N - w_\s^N)^2.
\end{equation}
Given $\s \in \E_K$, by definition of $r_K$ and $\Nd_K$, using~\eqref{hyp:m} and the positivity of $m$, we obtain 
\[
	r_K(\Nd_K) 
		= \frac{1}{|\E_K|}\sum_{\s'\in \E_K} m \left (N_K, N_{\s'} \right )
		\geq \frac{1}{|\E_K|} m_h \left (g(w_K^N + \phi_K + \alpha_N), g(w_\s^N + \phi_\s + \alpha_N) \right ).
\]
Recall that formula~\eqref{eq:mhg} holds, so 
\[
	r_K(\Nd_K) \geq \frac{1}{|\E_K|} \frac{G(w_K^N + \phi_K + \alpha_N)- G(w_\s^N + \phi_\s + \alpha_N) }
		{(w_K^N + \phi_K + \alpha_N) - (w_\s^N + \phi_\s + \alpha_N)}.
\]
Moreover, since $G$ is convex, $(x,y) \mapsto\frac{G(x)-G(y)}{x-y}$ is non-decreasing with respect to both its variables,
therefore using the bound on $\phid_\D$ proved previously we get
\begin{align*}
	r_K(\Nd_K) &\geq \frac{1}{|\E_K|} \frac{G(w_K^N - \phi_\sharp + \alpha_N)- G(w_\s^N - \phi_\sharp + \alpha_N) }
		{(w_K^N - \phi_\sharp + \alpha_N) - (w_\s^N - \phi_\sharp + \alpha_N)} \\
		 &= \frac{1}{|\E_K|} \frac{G(w_K^N - \phi_\sharp + \alpha_N)- G(w_\s^N - \phi_\sharp + \alpha_N) }
		{w_K^N - w_\s^N } .
\end{align*}
%
%
Using \eqref{eq:bounddiss}, one gets that for any $K \in \M$ and for any $\s \in \E_K$, 
\[
	0\leq \left ( G(w_K^N - \phi_\sharp + \alpha_N)-G(w_\s^N - \phi_\sharp + \alpha_N) \right ) ( w_K^N -w_\s^N) 
		\leq\zeta
\]
where $\zeta = \frac{B_\sharp}{\alpha_\flat \lambda_\flat}  \displaystyle\max_{K \in \M,\ \s \in \E_K} \frac{d_{K,\s}|\E_K|}{ |\s|} $. Now, let $\u_\D = \w_\D^N + (-\phi_\sharp + \alpha_N) \one_\D \in \V_\D$. It is clear that it suffices to get bounds on $\u_\D$ to bound $\w_\D^N$. The previous relation writes 
\begin{equation} \label{eq:propa}
	\forall K \in \M, \ \forall \s \in \E_K, \ 0 \leq  E(u_K,u_\s) \leq \zeta, 
\end{equation}
where $E:\R^2 \to \R_+$ is the function defined in Lemma \ref{lem:propa}.
Now, notice that since $\w_\D^N \in \V_{\D,0}$ there exists $\s_{0} \in \E^D_{ext}$ such that $u_{\s_{0}} = -\phi_\sharp + \alpha_N$. One can then use the connectivity of the mesh to conclude: 
for any cell $K$ (\emph{resp.} face $\s$) there is a finite sequence of components of $\u_\D$, denoted $(x_k)_{0 \leq k \leq l}$, starting at $x_0 = u_{\s_0}$ and finishing at $x_l = u_K$ (\emph{resp.} $x_l = u_\s$) such that for any $k$ in $\lbrace 0, \dots , l-1 \rbrace$, one has $E(x_{k+1}, x_{k}) \leq \zeta$. Therefore, by $l$ successive applications of Lemma \ref{lem:propa}, we get the existence of some $c^N>0$, depending on $\zeta$, $l$, and $x_0 =  -\phi_\sharp + \alpha_N$ such that $|x_l| \leq c^N$. Thus, there exists $C^N_\sharp >0$ depending on the data, $\D$ and $B_\sharp$ such that 
\[
	\forall K \in \M, \ \forall \s \in \E, \quad |w^N_K| \leq C^N_\sharp \text{ and } |w^N_\s| \leq C^N_\sharp .
\]
Using the same strategy, one gets bounds for $\w_\D^P$, which concludes the proof. 
\end{proof}

\bibliographystyle{siam} 
\bibliography{Bib_SC}
\end{document}